 \def\a{\alpha}
 \def\da{{\dot\alpha}}
 \def\be{\beta}
 \def\dbe{{\dot\beta}}
 \def\de{\delta}
 \def\De{\Delta}
 \def\e{\varepsilon}
 \def\deta{{\dot{\eta}}}
 \def\ga{\gamma}
 \def\dga{{\dot{\gamma}}}
 \def\Ga{\Gamma}
 \def\vr{\varphi}
 \def\vrt{\vartheta}
 \def\la{\lambda}
 \def\La{\Lambda}
 \def\si{\sigma}
 \def\Si{\Sigma}
 \def\Om{\Omega}
 \def\tt{\theta}
 \def\th{\theta}
 \def\Th{\Theta}
 \def\dxi{{\dot{\xi}}}
 \def\re{{\mathbb R}}
 \def\na{{\mathbb N}}
 \def\then{\Longrightarrow}
 \def\ov{\overline}
 \def\Z{{\mathbb Z}}
\def\Á{\textexclamdown}
 \def\A{{\mathbb A}}
 \def\cA{{\mathcal A}}
 \def\cC{{\mathcal C}}
 \def\cE{{\mathcal E}}
 \def\E{{\mathbb E}}
 \def\cH{{\mathcal H}}
 \def\L{{\mathbb L}}
 \def\Lo{{L}}
 \def\cM{{\mathcal M}}
 \def\cN{{\mathcal N}}
 \def\cO{{\mathcal O}}
 \def\cP{{\mathcal P}}
 \def\cU{{\mathcal U}}
 \def\cW{{\mathcal W}}
 \def\tq{{\tilde{q}}}
 \def\tq1{{\tilde{q}_1}}
 \def\dx{{\dot{x}}}
 \def\fX{{\mathfrak X}}
 \def\dy{{\dot{y}}}
 \def\dz{{\dot{z}}}
 \def\ogd{{\ov\ga_\de}}
 \def\gad{{\ga_\de}}
 \def\0{{\mathbf 0}}
 \def \lv{\left\vert}
 \def \rv{\right\vert}
 \def \lV{\left\Vert}
 \def \rV{\right\Vert}
 \def \ov{\overline}
 \def \then{\Longrightarrow}
 \definecolor{dgreen}{rgb}{0,0.3,0}
 \definecolor{dred}{rgb}{0.8,0,0}
 \def\ee{\text{\rm\large  e}}
 \DeclareMathOperator*{\tsum}{{\textstyle \sum}}
 \DeclareMathOperator{\supp}{supp}
 \DeclareMathOperator{\diam}{diam}
 \DeclareMathOperator{\intt}{int}
 \DeclareMathOperator{\Lip}{Lip}
 \DeclareMathOperator{\per}{per}
  \renewcommand{\proofname}{{\bf Proof:}}
 \theoremstyle{plain}
 \newtheorem{MainThm}{Theorem}
 \newtheorem{MainCor}[MainThm]{Corollary}
 \newtheoremstyle{Cl}
  {5pt}
  {3pt}
  {\sl}
  {}
  {\it}
  {:}
  {.5em}
  {}
 \newtheoremstyle{St}
  {5pt}
  {3pt}
  {\sl}
  {}
  {\bf}
  {.}
 {\newline}
  {}
 \def\begincproof{
                  \renewcommand{\proofname}{\it Proof:}
                  \begin{proof}
                 }
 \def\endcproof{
                \renewcommand{\qedsymbol}{$\diamondsuit$}
                \end{proof}
                \renewcommand{\qedsymbol}{\openbox}
                \renewcommand{\proofname}{\bf Proof:}
               }
 \newtheorem{Thm}{Theorem}[section]
 \newtheorem{Lemma}[Thm]{\bf Lemma}
 \newtheorem{Corollary}[Thm]{\bf Corollary}
 \newtheorem{Theorem}[Thm]{\bf Theorem}
 \newtheorem{Proposition}[Thm]{\bf Proposition}
 \theoremstyle{Cl}
 \newtheorem{Claim}{Claim} [Thm]
 \theoremstyle{St}
 \theoremstyle{definition}
 \newtheorem{Definition}[Thm]{\bf Definition}
 \theoremstyle{remark}
 \newtheorem{Remark}[Thm]{\bf Remark}
 \newtheorem{mysec}[Thm]{}
\providecommand\@dotsep{5}
 \renewcommand{\proofname}{{\bf Proof:}}
 \title
 {Proof of the $C^2$ Ma\~n\'e's conjecture on surfaces.}
 \author[G. Contreras]{Gonzalo Contreras}
\address{CIMAT \\
          A.P. 402, 36.000 \\
          Guanajuato. GTO \\
          M\'exico.}
\email{gonzalo@cimat.mx}
\thanks{Partially supported by CONACYT, Mexico, grant  A1-S-10145.}
\subjclass{37J51, 37D05}
\keywords{Ma\~n\'e's Conjecture, Aubry-Mather Theory, Lagrangian Systems.}
\begin{document}

\makeatother

\parskip +5pt

\begin{abstract}
We prove that $C^2$ generic hyperbolic Ma\~n\'e sets contain a periodic periodic orbit.
In dimension 2, adding a result in \cite{CFR}
which states that $C^2$ generic Ma\~n\'e sets are hyperbolic
we obtain  Ma\~n\'e's Conjecture for surfaces in the $C^2$ topology: 
Given a Tonelli Lagrangian $L$ on a compact 
surface $M$ there is a $C^2$ open and dense set of functions $f:M\to\re$ such
that the Ma\~n\'e set of the Lagrangian $L+f$ is a hyperbolic periodic orbit.
\end{abstract}

\maketitle

\tableofcontents


\section{Introduction.}

Let $M$ be a closed riemannian manifold.
A Tonelli Lagrangian is a $C^2$ function
\linebreak
 $L:TM\to\re$ that  is 
\begin{enumerate}[(i)]
\openup 5pt
\item {\it Convex:} $\exists a>0$\;
$\forall (x,v), (x,w)\in TM$, \;
$w\cdot \partial^2_{vv} L(x,v)\cdot w \ge a |w|_x^2$.
\end{enumerate}
The uniform convexity assumption and the compactness of $M$ imply that $L$ is
\begin{enumerate}[(i)]
\setcounter{enumi}{1}
\item {\it Superlinear:}
$\forall A>0$ $\exists B>0$ such that
$\forall (x,v)\in TM$: $L(x,v)> A\,|v|_x-B$.
\end{enumerate}

Given $k\in\re$, the Ma\~n\'e {\it action potential} is defined as
$\Phi_k:M\times M\to\re\cup\{-\infty\}$, 
\begin{equation}\label{actionpotential}
\Phi_k(x,y):=\inf_{\ga\in \cC(x,y)}
\int k+L(\ga,\dga),
\end{equation}
where 
\begin{equation}\label{defcxy}
\cC(x,y):=\{\ga:[0,T]\to M\; {\text{absolutely continuous }}\vert\;
T>0,\; \ga(0)=x,\;\ga(T)=y\;\}.
\end{equation}
The  Ma\~n\'e {\it critical value} is
\begin{equation}\label{defcL}
c(L) := \sup\{\, k\in \re \;|\; \exists\, x,y\in M :\;\Phi_k(x,y)=-\infty\;\}.
\end{equation}
See  \cite{CILib} for several characterizations of $c(L)$.

A curve $\ga:\re\to M$ is {\it semi-static} if 
$$
\forall s<t  \qquad
\int_s^t c(L)+L(\ga,\dga) = \Phi_{c(L)}(\ga(s),\ga(t)).
$$
Also $\ga:\re\to M$ is {\it static} if
$$
\forall s<t  \qquad
\int_s^t c(L)+L(\ga,\dga) = -\Phi_{c(L)}(\ga(t),\ga(s)).
$$
The {\it Ma\~n\'e set} of $L$ is
$$
\cN(L):=\{(\ga(t),\dga(t))\in TM \;|\; t\in\re,\; \ga:\re\to M\text{ is semi-static }\},
$$
and the {\it Aubry set } is
$$
\cA(L):=\{(\ga(t),\dga(t))\in TM \;|\; t\in\re,\; \ga:\re\to M\text{ is static }\}.
$$

The Euler-Lagrange equation
$$
\tfrac d{dt}\, \partial_vL = \partial_x L
$$
defines the Lagrangian flow $\vr_t$ on $TM$.
The {\it energy function} $E_L:TM\to\re$,
$$
E_L(x,v):= \partial_v L(x,v)\cdot v -L(x,v),
$$
is invariant under the Lagrangian flow.
The Ma\~n\'e set $\cN(L)$ is invariant under the Lagrangian flow and
it is contained in the energy level $\cE:=E_L^{-1}\{c(L)\}$ (see  Ma\~n\'e \cite[p.~146]{Ma7} or \cite{CILib}).

Let $\cM_{\text{inv}}(L)$ be the set of Borel probabilities in $TM$ which are invariant under the Lagrangian flow.
Define the {\it action } functional   $A_L:\cM_{\text{inv}}(L)\to\re\cup\{+\infty\}$ as 
$$
A_L(\mu):=\int L\,d\mu.
$$
The set of {\it minimizing measures} is
$$
\cM_{\min}(L):=\arg\min_{\cM_{\text{inv}}(L)} A_L,
$$
and the {\it Mather set } $\cM(L)$ is the union of the support of minimizing measures:
$$
\cM(L):=\textstyle\bigcup\limits_{\mu\in\cM_{\min}(L)} \supp(\mu).
$$
Ma\~n\'e proves (cf. Ma\~n\'e~\cite[Thm. IV]{Ma7} also \cite[p. 165]{CDI}) that an invariant measure is minimizing if and only if it is supported in the Aubry set. Therefore we get the set of inclusions
\begin{equation}\label{mane}
\cM\subseteq\cA\subseteq\cN\subseteq\cE.
\end{equation}

\begin{Definition}\label{defhipL}\quad

We say that $\cN(L)$ is {\it hyperbolic} if there are sub-bundles
$E^s$, $E^u$ of $T\cE|_{\cN(L)}$ and $T_0>0$ such that
\begin{enumerate}[(i)]
\item $T\cE|_{\cN(L)} = E^s \oplus\langle \frac d{dt}\vr_t\rangle \oplus E^u$.
\item $\lV D\vr_{T_0}|_{E^s}\rV < 1$, $\lV D\vr_{-T_0}|_{E^u}\rV <1$.
\item $\forall t\in\re$ \quad $(D\vr_t)^*(E^s)=E^s$, $(D\vr_t)^*(E^u)=E^u$.
\end{enumerate}
\end{Definition}

 Hyperbolicity for {\sl autonomous}
lagrangian or hamiltonian flows is  always understood as hyperbolicity
for the flow restricted to the energy level.

Fix a Tonelli Lagrangian $\Lo$. Let
$$
\cH^k(\Lo):=\{\,\phi\in C^k(M,\re)\;|\; \cN(\Lo+\phi) \text{ is hyperbolic }\},
$$
endowed with the $C^k$ topology.
By \cite[lemma 5.2, p. 661]{CP} the map $\phi\mapsto \cN(\Lo+\phi)$ is upper
semi-continuous and $\phi\mapsto c(\Lo+\phi)$ is continuous \cite[lemma 5.1]{CP}. 
This, together with  the persistence of hyperbolicity (cf. \cite[5.1.8]{FH} or  proposition~\ref{unifhip} below)
imply that  $\cH^k(\Lo)$ is an open set for any $k\ge 2$.

In \cite{CP} theorem C shows that generically $\cM=\cA=\cN$ is the support of a single 
minimizing measure.  Ma\~n\'e \cite[theorem F]{Ma6} proves that this measure is a strong limit of
invariant probabilities supported on periodic orbits.

Let 
$$
\cP^2(\Lo) := \{\, \phi \in C^2(M,\re) \;|\;
\cN(\Lo+\phi)\text{ contains a periodic orbit  or a singularity}\},
$$
and let $\ov{\cP^2(\Lo)}$ be its closure in $C^2(M,\re)$.
We will prove
\begin{MainThm}\label{HYP}
$\cH^2(\Lo)\subset \ov{\cP^2(\Lo)}$.
\end{MainThm}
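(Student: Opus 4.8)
Absorbing a given $\phi\in\cH^2(\Lo)$ into the Lagrangian, set $L:=\Lo+\phi$, so that $\cN(L)$ is hyperbolic; since the affine map $\psi\mapsto\phi+\psi$ identifies $\cP^2(\Lo)$ with $\phi+\cP^2(L)$ (the latter defined exactly as $\cP^2(\Lo)$ but with base Lagrangian $L$), it suffices to prove that $0\in\ov{\cP^2(L)}$, i.e. that for every $\varepsilon>0$ there is $\psi\in C^2(M,\re)$ with $\lV\psi\rV_{C^2}<\varepsilon$ and $\cN(L+\psi)$ containing a periodic orbit or a singularity. If $\cN(L)$ already contains one we take $\psi=0$; so from now on assume it contains neither.

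First I would extract the dynamical structure forced by hyperbolicity. By Proposition~\ref{unifhip} the Lagrangian flow $\vr_t$ is uniformly hyperbolic on an isolating neighbourhood $U\subset\cE$ of $\cN(L)$, so the Anosov closing and shadowing lemmas hold in $U$. On the other hand the Aubry set $\cA(L)\subseteq\cN(L)$ is non-empty, and each of its static classes is chain transitive for the flow --- the standard fact (see e.g.~\cite{CFR}), a consequence of the triangle inequality for $\Phi_{c(L)}$ together with the defining relation $\Phi_{c(L)}(x,y)+\Phi_{c(L)}(y,x)=0$ of a static class, which furnishes arbitrarily good periodic pseudo-orbits through every one of its points. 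Fixing a static class $\Lambda_0$, which we may assume is not a single singularity, and shadowing these pseudo-orbits, we obtain inside any prescribed neighbourhood of $\Lambda_0$ in $\cE$ genuine closed orbits $\Gamma$ of the Lagrangian flow of $L$, all lying on the critical level $E_L^{-1}\{c(L)\}$. Crucially $\Gamma$ need not lie in $\cN(L)$ --- which need not be locally maximal --- and this is precisely why a perturbation of $L$ cannot be avoided.

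The core of the argument is then a Ma\~n\'e-type perturbation lemma. Given such a closed orbit $\Gamma$, projecting to a closed curve $\gamma$ in $M$ and lying in a thin tube around $\Lambda_0$, one constructs a potential $\psi$ with $\lV\psi\rV_{C^2}$ as small as desired and supported in a small tube around $\gamma$, such that $\gamma$ (or a $C^1$-close closed curve) is a closed solution of the Euler--Lagrange equation of $L+\psi$ on the level $E_{L+\psi}^{-1}\{c(L+\psi)\}$ and, moreover, is semi-static for $L+\psi$; then $\Gamma\subset\cN(L+\psi)$ and we are done. The construction must reconcile two opposing demands. To force $\gamma$ to be a \emph{global} minimizer of the $(L+\psi+k)$-action one wants to raise the action of all competing curves, i.e. to take $\psi\ge0$; but a nonnegative potential supported near $\gamma$ typically lowers the critical value (there is no $L$-minimizing measure carried by the single orbit $\Gamma$, as $\Gamma\not\subset\cN(L)$ whereas all minimizing measures are supported in $\cA(L)$), which would push $\Gamma$ off the critical energy level. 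The resolution is to take $\psi$ vanishing to second order along $\gamma$ --- so that $\gamma$ remains an Euler--Lagrange orbit of energy $c(L)$ for $L+\psi$, while an explicit analysis of the invariant measures of $L+\psi$ near $\cA(L)$, where the dynamics is hyperbolic, shows $c(L+\psi)=c(L)$ --- and to shape $\psi$ in the directions transverse to $\gamma$ using the uniform hyperbolic estimates: any competitor curve that leaves a suitably thin tube around $\gamma$ pays strictly more extra action than it can recover, while any competitor that stays inside the tube is, by hyperbolicity, forced to shadow $\Gamma$, so that its action is controlled by the integer number of turns it makes around $\Gamma$. Because the dynamics of $L$ near $\Lambda_0$ is completely understood, the genuinely \emph{global} competitors entering the Peierls barrier at points of $\gamma$ are, via shadowing, reduced to these local ones. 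This gives $\cA(L+\psi)\supseteq\Gamma$, hence $\cN(L+\psi)\supseteq\Gamma$, and letting $\varepsilon\to0$ yields $0\in\ov{\cP^2(L)}$, i.e. $\phi\in\ov{\cP^2(\Lo)}$.

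The hard part will be precisely this perturbation lemma: certifying \emph{global} action-minimality of $\Gamma$ at the shifted --- but controlled --- critical value while keeping $\lV\psi\rV_{C^2}$ small. Hyperbolicity is what makes this tractable, through uniform cone and contraction estimates, through shadowing (which converts the a priori global minimization problem into a local one in the tube around $\Gamma$), and through the fact, guaranteed again by Proposition~\ref{unifhip}, that $\cN(L+\psi)$ stays hyperbolic inside the same isolating block, so that all the estimates remain uniform along the perturbation. A secondary point worth checking is that a closable recurrent loop is always available: this is ensured because $\cA(L)$ is always non-empty and every static class is chain transitive, so the degenerate possibility that $\cN(L)$ consists of a finite heteroclinic cycle carrying no recurrence does not occur. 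Philosophically, the passage from $\Gamma$ near $\cA(L)$ to $\Gamma\subset\cN(L+\psi)$ is the precise, quantitative realization of Ma\~n\'e's theorem \cite{Ma6} that the minimizing measure is a limit of periodic-orbit measures.
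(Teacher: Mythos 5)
Your macro-strategy matches the paper's: absorb $\phi$ into $L$, assume $\cA(L)$ has no periodic orbit or singularity, extract a closed orbit $\Gamma\subset E_L^{-1}\{c(L)\}$ near $\cA(L)$ by shadowing, then add a nonnegative channel potential $\psi$ vanishing to second order along $\pi(\Gamma)$ and show $\Gamma\subset\cA(L+\psi)$. The channel $\psi$ with $\psi|_{\pi\Gamma}\equiv0$ and $\psi(x)\approx\varepsilon\, d(x,\pi\Gamma)^2$ is exactly the paper's \eqref{defphi}, and the mechanism of the perturbation step (Mather crossing + hyperbolic shadowing to reduce global competitors to the tube, in the spirit of Quas--Siefken) is the paper's Proposition~\ref{Ppert}.

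However, there is a genuine gap in the step ``shadowing these pseudo-orbits, we obtain\ldots genuine closed orbits $\Gamma$.'' Shadowing a good periodic $\varepsilon$-chain does give $\Gamma$ with $c(\Gamma,\cA(L))$ and $A_{L+c(L)}(\Gamma)$ small \emph{in absolute terms}, but the perturbation lemma cannot close on that alone. To make the crossing-lemma estimates strictly favor $\Gamma$, one needs the Yuan--Hunt ``class~I'' condition: writing $\ga(\Ga):=\min\{d(\Ga(s),\Ga(t)): |s-t|_{\rm mod\,per(\Ga)}\ge 1\}$ for the orbit's self-separation scale, one must have
\[
c(\Ga,\cA(L))<\e\,\ga(\Ga)\qquad\text{and}\qquad A_{L+c(L)}(\Ga)<\e^2\,\ga(\Ga)^2,
\]
i.e.\ the (positive) action per lap and the distance to $\cA(L)$ must be controlled \emph{relative to} $\ga(\Ga)$, not just small. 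A generic shadowing orbit may have $\ga(\Ga)$ far smaller than $\varepsilon$, in which case a competitor that stays in the tube but deviates at scale $\ga(\Ga)$ while skipping laps can undercut $\Gamma$, and the whole argument fails. Producing such a ``class~I'' orbit is the actual hard part: the paper's Proposition~\ref{palga} does it by combining the Bressaud--Quas counting bound (Lemma~\ref{Lper}: a subshift with $N$ symbols and entropy $h$ has a periodic orbit of period $\le 1+N\ee^{1-h}$), which gives a periodic specification with exponentially small jumps and only sub-exponential period (Proposition~\ref{Pspec}), and an iterative cutting-and-reshadowing procedure that terminates \emph{before} the estimates degrade precisely because of the sub-exponential period bound. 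None of this is in your plan; ``shadow pseudo-orbits'' is not a substitute for it.

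Two smaller corrections. The claim $c(L+\psi)=c(L)$ is false in general and not what the paper uses: since $\psi\ge0$ one only has $c(L+\psi)\le c(L)$, and the paper's argument needs and proves only the two-sided bound $0\ge c(L+\psi)\ge -\tfrac1T A_{L+c(L)}(\Ga)$ coming from the invariant measure on $\Gamma$; your perturbation lemma must be phrased to tolerate this drop. Also, verifying $\Gamma\subset\cA(L+\psi)$ in the paper does not proceed by showing $\Gamma$ is a global action minimizer directly, but by showing every backward semi-static ray for $L+\psi$ has $\alpha$-limit $\Gamma$ (Ma\~n\'e's theorem that $\alpha$-limits of semi-statics are static), via a finite-escape argument: each excursion from a $\ga(\Ga)$-tube around $\Gamma$ costs a definite positive amount of calibrated action, and a semi-static has finite total action. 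This finite-escape bookkeeping, and the crossing-lemma comparison with suitably reparametrized laps of $\Gamma$, is where the class~I inequalities are actually consumed.
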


In \cite{ham} we proved that if $\Ga\subset \cN(\Lo)$ is a periodic orbit, 
adding a potential $\phi_0\ge 0$ which is locally of the form $\phi_0(x) = \e\, d(x,\pi(\Ga))^2$
makes $\Ga$ a hyperbolic periodic orbit (or hyperbolic singularity) for the Lagrangian flow of $\Lo+\phi_0$ and
also $\cN(\Lo+\phi_0)=\Ga$. 
Moreover \cite[p.~934]{ham}, $\Ga$ has the {\sl locking property} meaning that there is a $C^2$ neighborhood $\cU$ of 
$\phi_0$ such that for $\phi\in\cU$, 
$\cN(L+\phi)=\Ga_\phi$ the
continuation $\Ga_\phi$ of the periodic orbit $\Ga$ in the energy level $E_{L+\phi}^{-1}\{c(L+\phi)\}$.
This follows from the semicontinuity of $\phi\mapsto \cN(L+\phi)$ and the expansivity of $\Ga$.
Therefore defining
$$
\cH\cP^2(\Lo):= \{\, \phi \in C^2(M,\re) \;|\;
\cN(\Lo+\phi)\text{ is a hyperbolic periodic orbit or singularity}\}
$$
we get
\begin{MainCor}
The set $\cH\cP^2(\Lo)$ contains an open and dense set in $\cH^2(\Lo)$.
\end{MainCor}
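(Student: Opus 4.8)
The plan is to combine Theorem~\ref{HYP} with the results of \cite{ham} recalled above. Since a Ma\~n\'e set that is a hyperbolic periodic orbit or singularity is in particular hyperbolic, we have $\cH\cP^2(\Lo)\subseteq\cH^2(\Lo)$, so it suffices to show that the open set $V:=\interior\bigl(\cH\cP^2(\Lo)\bigr)$ is dense in $\cH^2(\Lo)$. Fix $\phi\in\cH^2(\Lo)$ and a $C^2$-neighbourhood $\cW$ of $\phi$; as $\cH^2(\Lo)$ is open we may assume $\cW\subseteq\cH^2(\Lo)$. By Theorem~\ref{HYP} the inclusion $\cH^2(\Lo)\subset\ov{\cP^2(\Lo)}$ yields some $\phi_1\in\cP^2(\Lo)\cap\cW$, so $\cN(\Lo+\phi_1)$ is hyperbolic and contains a periodic orbit (or a singularity) $\Ga$.

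Next I would apply the construction of \cite{ham} to the Tonelli Lagrangian $\Lo+\phi_1$ --- adding a function of the base point alone leaves uniform convexity in $v$ and superlinearity untouched, so $\Lo+\phi_1$ is again Tonelli --- and to the orbit $\Ga\subset\cN(\Lo+\phi_1)$. This produces a potential $\phi_0\ge 0$, locally equal to $\e\,d(x,\pi(\Ga))^2$ near $\pi(\Ga)$, whose $C^2$-norm can be made arbitrarily small by taking the coefficient $\e>0$ small, and for which $\Ga$ is a hyperbolic periodic orbit (resp.\ a hyperbolic singularity) for the Lagrangian flow of $(\Lo+\phi_1)+\phi_0=\Lo+\psi$, where $\psi:=\phi_1+\phi_0$, with in addition $\cN(\Lo+\psi)=\Ga$. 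Choosing $\e$ small enough that $\psi\in\cW$ we get $\psi\in\cH\cP^2(\Lo)\cap\cW$.

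Finally I would invoke the locking property from \cite[p.~934]{ham}: there is a $C^2$-neighbourhood $\cU$ of $\psi$ such that $\cN(\Lo+\phi')=\Ga_{\phi'}$ for every $\phi'\in\cU$, where $\Ga_{\phi'}$ denotes the continuation of $\Ga$ inside the energy level $E_{\Lo+\phi'}^{-1}\{c(\Lo+\phi')\}$. Since $\Ga$ is hyperbolic, its continuation $\Ga_{\phi'}$ is again a hyperbolic periodic orbit (resp.\ singularity) for $\phi'$ in a possibly smaller neighbourhood $\cU'\subseteq\cU$, by persistence of hyperbolicity (proposition~\ref{unifhip}, or \cite[5.1.8]{FH}); hence $\cU'\subseteq\cH\cP^2(\Lo)$, i.e.\ $\psi\in V$. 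Thus $V\cap\cW\neq\emptyset$, and since $\phi\in\cH^2(\Lo)$ and $\cW$ were arbitrary, $V$ is open and dense in $\cH^2(\Lo)$, which is the assertion. Given Theorem~\ref{HYP}, the argument is essentially bookkeeping; the single point that genuinely uses the strength of \cite{ham} is that one $C^2$-small perturbation $\phi_0$ can at once collapse the whole Ma\~n\'e set onto $\Ga$, make $\Ga$ hyperbolic, and lock this configuration under further small perturbations --- which is precisely what the quoted results about the quadratic potential $\e\,d(\cdot,\pi(\Ga))^2$ provide.
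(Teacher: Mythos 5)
Your proposal is correct and takes essentially the same route as the paper: approximate $\phi\in\cH^2(\Lo)$ by $\phi_1\in\cP^2(\Lo)$ via Theorem~\ref{HYP}, collapse the Ma\~n\'e set onto the periodic orbit $\Ga$ by adding the small quadratic potential $\phi_0$ from \cite{ham}, and then invoke the locking property together with persistence of hyperbolicity to land in the interior of $\cH\cP^2(\Lo)$. The only thing you made explicit that the paper leaves implicit is that the \cite{ham} construction is applied to the shifted Tonelli Lagrangian $\Lo+\phi_1$ rather than $\Lo$ itself, which is fine since $\cH\cP^2$, $\cH^2$, $\cP^2$ are all defined by varying a potential added to $\Lo$.
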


With A. Figalli and L. Rifford in \cite{CFR} we prove

\begin{MainThm}\label{CFR}
If $\dim M=2$ then
$\cH^2(\Lo)$ is open and dense.
\end{MainThm}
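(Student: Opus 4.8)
Openness of $\cH^2(\Lo)$ was already established above, so only density is at issue. Fix $\phi_0\in C^2(M,\re)$ and $\e>0$; replacing $\Lo$ by $\Lo+\phi_0$ we must produce a $C^2$ potential of norm $<\e$ whose addition makes the Ma\~n\'e set hyperbolic. Absorbing a first small perturbation given by \cite{CP} (Theorem C there) we may moreover assume that $\cM(\Lo)=\cA(\Lo)=\cN(\Lo)=\supp(\mu)$ for a single minimizing measure $\mu$. The plan is then to split the argument according to the geometry of the Aubry set on the surface: either $\pi(\cA(\Lo))$ has nonempty interior, or it does not.

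Suppose first $\interior\,\pi(\cA(\Lo))\ne\emptyset$. On that open set weak KAM theory furnishes a critical subsolution, smooth there, whose graph carries $\cA(\Lo)$, so over a small disc $D\subset\pi(\cA(\Lo))$ the Aubry set is an invariant Lagrangian section. Choosing a $C^2$-small $\psi\ge0$ with a nondegenerate minimum at an interior point $p$ of $D$ and $\psi>0$ on $\pi(\cA(\Lo))\setminus\{p\}$, every invariant measure not concentrated on the fibre over $p$ acquires strictly larger $(\Lo+\psi)$-action, so $\pi(\cA(\Lo+\psi))$ has strictly smaller interior; iterating and keeping track of things by a Baire-type argument reduces us to the case $\interior\,\pi(\cA(\Lo))=\emptyset$, which we now assume.

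Since $\cA(\Lo)$ is a Lipschitz graph and, on a surface, minimizing curves cannot cross, $\pi(\cA(\Lo))$ lies on a one-dimensional lamination and is met by arbitrarily thin transversals. Hyperbolicity will be read off from the Green bundles: along any semi-static orbit the positive and negative Green subbundles $E^{+},E^{-}\subset T\cE$ exist (minimizing orbits have no conjugate points), are $D\vr_t$-invariant, continuous on $\cN(\Lo)$, and, because $\dim M=2$, are line fields with $E^{-}\le E^{+}$ in the natural order; the criterion available for minimizing sets on surfaces is that such a set is hyperbolic if and only if $E^{-}(\theta)\ne E^{+}(\theta)$ everywhere. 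It therefore suffices to perturb so that the closed invariant conjugate set $\Delta=\{\theta\in\cN:E^{-}(\theta)=E^{+}(\theta)\}$ becomes empty. The perturbations used for this are nonnegative $C^2$ functions $\psi$ vanishing exactly, and to second order, on $\pi(\cA(\Lo))$; for such $\psi$ one has $c(\Lo+\psi)=c(\Lo)$, the measure $\mu$ remains the unique minimizing measure, so $\supp(\mu)\subseteq\cA(\Lo+\psi)$, and by upper semicontinuity of $\phi\mapsto\cN(\Lo+\phi)$ the set $\cN(\Lo+\psi)$ stays in any prescribed neighborhood of $\cN(\Lo)$ once $\lV\psi\rV_{C^2}$ is small. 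Given $\theta_0\in\Delta$, the closing technology for Aubry sets developed with Figalli and Rifford is used to construct a $\psi$ in this class, supported in a thin box around a single point of the orbit of $\theta_0$, of $C^2$-norm as small as desired, whose effect on the Jacobi equation along that orbit is a controlled modification of definite sign that forces $E^{-}\ne E^{+}$ along the orbit and hence, by continuity, on an open neighborhood of it in $\cN(\Lo+\psi)$; the same construction is arranged so that $\cA(\Lo+\psi)$ is not enlarged and no new conjugate point is created inside the support of $\psi$. Compactness of $\Delta$ lets finitely many such orbits cover it; localizing the corresponding potentials near distinct points of the thin set $\pi(\cA(\Lo))$ keeps their supports pairwise disjoint, so their sum $\psi$ (of $C^2$-norm $<\e$) has empty conjugate set. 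Then $\cA(\Lo+\psi)$ is hyperbolic, it has finitely many static classes with no cycles among them, and so $\cN(\Lo+\psi)$ — the union of those classes with the connecting orbits between them — is hyperbolic as well. Together with openness this gives that $\cH^2(\Lo)$ is open and dense.

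The main obstacle is exactly this localized perturbation in the thin case: a single potential must simultaneously (i) destroy a prescribed conjugate point with a quantitative lower bound on the new separation of the Green bundles, (ii) leave the Aubry set no larger and not undo the control already gained near earlier orbits, and (iii) introduce no new conjugate point inside its support — and the resulting procedure must terminate in finitely many steps. Supplying such perturbations is precisely the role of the geometric-control machinery built with Figalli and Rifford, and it is also what ties the proof to the $C^2$ topology and to surfaces: the alternative $E^{-}=E^{+}$ versus $E^{-}\ne E^{+}$ is a statement about line fields, and the impossibility of crossing minimizers is two-dimensional; the case of an Aubry set with nonempty interior is disposed of separately by the softer deformation of the invariant Lagrangian graph. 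Combined with Theorem~\ref{HYP} and the locking property from \cite{ham}, this will yield Ma\~n\'e's Conjecture for surfaces in the $C^2$ topology.
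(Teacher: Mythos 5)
Theorem~\ref{CFR} is not proved in this manuscript: as the text states explicitly, it is established jointly with Figalli and Rifford in the separate reference \cite{CFR} and is imported here as a black box, so there is no argument in the paper for your proposal to be measured against. As a reconstruction of \cite{CFR}, your sketch does capture the overall shape of that work --- reduce via \cite{CP} to a single minimizing measure, detect hyperbolicity on a surface via transversality of the two Green bundles along minimizers, and build $C^2$-small nonnegative potentials near a bad orbit that separate the bundles without enlarging the Aubry set --- but the step you yourself single out as the ``main obstacle'' (a localized potential that quantitatively separates the Green bundles along a prescribed orbit, does not enlarge $\cA$, creates no new conjugate points in its support, and does all this in a procedure that terminates with total norm $<\e$) is essentially the entire technical content of \cite{CFR}, and you discharge it by invoking ``the geometric-control machinery built with Figalli and Rifford.'' Invoking that machinery to prove Theorem~\ref{CFR} is circular; what you have written is an annotated bibliography entry for the theorem, not a proof.

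Two concrete steps would not survive scrutiny even as a summary. In the case $\interior\pi(\cA(\Lo))\neq\emptyset$ you assert that ``weak KAM theory furnishes a critical subsolution, smooth there'': critical subsolutions are in general only $C^{1,1}$, and their smoothness over the interior of the projected Aubry set is not automatic, so you cannot conclude that $\cA(\Lo)$ is a smooth invariant Lagrangian section over a disc. The proposed remedy --- iterate potentials with a nondegenerate minimum at an interior point and ``keep track of things by a Baire-type argument'' --- is not an argument: there is no reason the iteration terminates, no bound on the accumulated $C^2$ norm, and no reason $\interior\pi(\cA)$ strictly shrinks at each step rather than merely moving. Second, hyperbolicity of $\cA(\Lo+\psi)$ does not automatically give hyperbolicity of $\cN(\Lo+\psi)$: after perturbation the Ma\~n\'e set may contain semi-static connecting orbits between several static classes, and hyperbolicity of that heteroclinic web needs its own argument (local maximality of an ambient hyperbolic set, or a transversality statement along the connections), which you do not supply. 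These are exactly the issues \cite{CFR} spends most of its length resolving, which is why the present paper cites that work rather than reproving it.
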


Thus for surfaces in the $C^2$ topology we obtain Ma\~n\'e's Conjecture
\cite[p. 143]{Ma7}:

\begin{MainCor}
If $\dim M =2$ then $\cH\cP^2(\Lo)$ contains an  open and dense set in $C^2(M,\re)$.
\end{MainCor}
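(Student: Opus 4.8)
The plan is to derive this last Corollary by assembling the two Main Theorems with the first Corollary; it is in essence a bookkeeping statement. First I would invoke the Corollary above, which provides a set $\cG$, open and dense in $\cH^2(\Lo)$, with $\cG\subseteq\cH\cP^2(\Lo)$. The introduction already records that $\cH^k(\Lo)$ is open in $C^k(M,\re)$ for every $k\ge 2$ — this comes from upper semicontinuity of $\phi\mapsto\cN(\Lo+\phi)$, continuity of $\phi\mapsto c(\Lo+\phi)$, and persistence of hyperbolicity — so in particular $\cH^2(\Lo)$ is open in $C^2(M,\re)$, and therefore $\cG$, being open in the open set $\cH^2(\Lo)$, is open in $C^2(M,\re)$.

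For density I would bring in Theorem~\ref{CFR}: when $\dim M=2$, $\cH^2(\Lo)$ is dense in $C^2(M,\re)$. Then $\cG$ dense in $\cH^2(\Lo)$ and $\cH^2(\Lo)$ dense in $C^2(M,\re)$ give $\overline{\cG}\supseteq\overline{\cH^2(\Lo)}=C^2(M,\re)$, using only that the closure operator is idempotent, i.e.\ a dense subset of a dense subset is dense in the ambient space. Hence $\cG\subseteq\cH\cP^2(\Lo)$ is an open and dense subset of $C^2(M,\re)$, which is precisely the claim; and by the very definition of $\cH\cP^2(\Lo)$, for each $\phi\in\cG$ the set $\cN(\Lo+\phi)$ is a hyperbolic periodic orbit (or hyperbolic singularity), so taking $f=\phi$ witnesses Ma\~n\'e's conjecture in the stated form.

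I do not expect any genuine obstacle in this step. All of the depth lives upstream: in Theorem~\ref{HYP}, which puts a periodic orbit or a singularity into $\cN(\Lo)$ once the Ma\~n\'e set is hyperbolic; in the locking and expansivity argument of~\cite{ham}, which upgrades Theorem~\ref{HYP} to the open–dense inclusion $\cG\subseteq\cH\cP^2(\Lo)$; and in Theorem~\ref{CFR}, which makes hyperbolicity of the Ma\~n\'e set $C^2$-generic on surfaces. The final Corollary is then the one-line consolidation of these three inputs, the only thing to verify being the elementary ``open-dense inside open-dense is open-dense'' principle.
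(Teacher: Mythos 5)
Your argument matches the paper's (implicit, one-line) derivation exactly: combine the preceding Corollary with Theorem~\ref{CFR} via the elementary fact that an open dense subset of an open dense subset is open and dense in the ambient space. No issues.
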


Observe that from the inclusions in \eqref{mane}, for potentials $\phi\in \cH\cP^2(\Lo)$
the lagrangian $L+\phi$ has a unique minimizing measure and it is supported on a
hyperbolic periodic orbit or a hyperbolic singularity. The set $\cH\cP^2(\Lo)$
is open in the $C^2$ topology, so we can approximate the lagrangian $\Lo$ with a
$C^\infty$ potential $\phi$ to obtain a periodic minimizing measure, but 
the approximation is only proved to be $C^2$ small.

Since in theorem~\ref{HYP} the Aubry set is hyperbolic, by the shadowing lemma
$\cA(\Lo)$ is accumulated by periodic orbits. The idea of the proof is to choose a
special periodic orbit $\Ga$ nearby $\cA(\Lo)$ with small action and small period
and prove that adding a channel $\phi$ centered at $\Ga$, defined in \eqref{defphi}
produces that $\Ga\subset \cA(\Lo+\phi)$.

Theorem~\ref{HYP} is the same as the main theorem in the manuscript \cite{closed} which will
remain unpublished. The proof below uses a simplification devised by  
Huang, Lian, Ma, Xu, Zhang \cite{HLMXZ1},
\cite{HLMXZ2}, see also Bochi~\cite{Bochi3}.
The proof in \cite{closed}, \cite{ground}
 is based in the fact that generic hyperbolic
Ma\~n\'e sets have zero topological entropy.
The following proof is based on the periodic orbit
which is used to prove zero entropy.
The point is that the estimates of Bressaud and Quas~\cite{BQ} 
for the action and period of optimal periodic orbits nearby $\cA(\Lo)$
are so good that the cutting process in proposition~\ref{palga} stops
before the estimates get spoiled.

This proof owes a lot to the people working on ergodic optimization.
Ergodic optimization was born as a baby version of  Aubry-Mather 
theory adapted to symbolic dynamics \cite{CLT}. Now the subject has 
matured enough to give the main ideas of the proof of an important conjecture 
in Aubry-Mather theory.

Main differences of lagrangian systems with ergodic optimization besides that the
dynamical system depends on the lagrangian, are that perturbations need to be $C^2$
instead of Lipschitz and that the perturbations $\phi$ are defined in the configuration space 
and not in the phase space. These problems are solved by comparing the actions with
static orbits and using Fathi's differentiability estimates for weak KAM solutions, and 
observing that quasi minimizing objects inherit part of Mather's graph property.

In section~\ref{SS2} we obtain periodic specifications in $\cA(L)$ with 
exponentially small jumps and sub-exponential period. In section~\ref{SS3}
proposition~\ref{palga} we obtain periodic orbits $\Ga$ nearby $\cA(L)$ with small
action compared to their self-approximations,
called {\sl class I} by Yuan-Hunt~\cite{YH}.
In section~\ref{SS4} we prove in proposition~\ref{Ppert} that adding a 
channel $\phi$ centered in $\Ga$ we obtain $\Ga\subset \cA(L+\phi)$, 
thus proving theorem~\ref{HYP}.  In symbolic dynamics this was known
to Yuan-Hunt~\cite{YH} but here we use the method of Quas-Siefken~\cite{QS}.
In appendix~\ref{ashadowing} we prove 
the refinement of the shadowing lemmas that we need.

\section{Preliminars.}\label{SS1}

\openup -0.2pt

Let  $\cM_{\text{inv}}(L)$ be the set of Borel probabilities in $TM$ invariant under the 
 Lagrangian flow.
Denote by $\cM_{\min}(L)$ the set of minimizing measures for the Lagrangian $L$, i.e.
\begin{equation}\label{Mmin}
\cM_{\min}(L):=\Big\{\,\mu\in\cM_{\text{inv}}(L)\;\Big|\;
\int_{TM}L\,d\mu = -c(L)\;\Big\}.
\end{equation}
Their name is justified (cf. Ma\~n\'e \cite[Theorem II]{Ma7}) by 
\begin{equation}\label{minmeas}
-c(L) = \min_{\mu\in\cM_{\text{inv}}(L)}\int_{TM} L \; d\mu 
= \min_{\mu\in\cC(TM)}\int_{TM} L \; d\mu.
\end{equation}
 Fathi and Siconolfi \cite[Theorem 1.6]{FaSi} prove the second equality in \eqref{minmeas}
 where the set of  {\it closed measures} is defined by 
$$
{\mathcal C}(TM):=\Big\{ \, \mu \text{ Borel probability on }TM\; \Big\vert\;
\forall \phi\in C^1(M,\re) \; \int_{TM} d\phi\; d\mu =0\,\Big\}.
$$

Given a closed curve $\ga:[0,T]\to M$, using the closed measure 
$\int f \, d\mu_\ga := \frac 1T \int_0^T f(\ga,\dga) \, dt$ in
\eqref{minmeas} we get
\begin{equation}\label{cloga}
\ga \text{ closed curve in $M$}
\quad\then \quad
A_{L+c(L)}(\ga) \ge 0.
\end{equation}

Recall that a curve $\ga:\re\to M$ is {\it static} for a Tonelli Lagrangian $L$
if 
\begin{equation}\label{static2}
s<t \quad \then\quad
\int_s^t L(\ga,\dga) = -\Phi_{c(L)}(\ga(t),\ga(s));
\end{equation}
equivalently (cf. Ma\~n\'e \cite[pp. 142--143]{Ma7}), 
if $\ga$ is semi-static and 
\begin{equation}\label{aubry}
 s<t\quad\then\quad \Phi_{c(L)}(\ga(s),\ga(t))+\Phi_{c(L)}(\ga(t),\ga(s))=0.
\end{equation}
The {\it Aubry set} is defined as
$$
\cA(L):=\{\,(\ga(t),\dga(t))\;|\; t\in\re, \;\ga\text{ is static}\;\},
$$
its elements are called {\it static vectors}.
In this section we prove that with this definition $\cA(L)$ is invariant.

\openup+0.2pt

\pagebreak

\begin{Lemma}[A priori bound]\label{priori}\quad

For $C>0$ there exists $A_0=A_0(C)>0$ such that 
if $\ga:[0,T]\to M$
is a solution of the Euler-Lagrange equation with $A_L(\ga)<C\, T$, then 
$$
\lv\dga(t)\rv < A_0\qquad\text{ for all }t\in[0,T].
$$
\end{Lemma}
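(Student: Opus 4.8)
The plan is to combine two standard features of Tonelli Lagrangians: the energy $E_L$ is constant along solutions of the Euler--Lagrange equation (this is recorded in the preliminaries), and $E_L$ is proper, i.e.\ its sublevel sets are compact subsets of $TM$. For the properness I would integrate the uniform convexity inequality $w\cdot\partial^2_{vv}L(x,v)\cdot w\ge a|w|_x^2$ along the segment $s\mapsto sv$, $s\in[0,1]$: writing $\phi(s):=L(x,sv)$ one has $\phi''(s)\ge a|v|_x^2$, hence $\phi(1)\le\phi(0)+\phi'(1)-\tfrac a2|v|_x^2$, which gives
\[
E_L(x,v)=\phi'(1)-\phi(1)\;\ge\;\tfrac a2\,|v|_x^2-c_0,\qquad c_0:=\max_{x\in M}L(x,0).
\]
So $E_L$ is superlinear in $v$, uniformly in $x$. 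Separately, applying superlinearity (ii) of $L$ with $A=1$ yields the companion bound $|v|_x\le L(x,v)+B(1)$ for all $(x,v)\in TM$.

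Now take $\ga:[0,T]\to M$ solving the Euler--Lagrange equation with $A_L(\ga)=\int_0^T L(\ga,\dga)<C\,T$. Then $L(\ga(t_0),\dga(t_0))<C$ for some $t_0\in[0,T]$, since otherwise $A_L(\ga)\ge C\,T$. By the bound coming from (ii) this gives $|\dga(t_0)|_{\ga(t_0)}<C+B(1)=:R$, so $(\ga(t_0),\dga(t_0))$ lies in the compact set $K_R:=\{(x,v)\in TM:|v|_x\le R\}$. Set $M_0=M_0(C):=\sup_{K_R}E_L<\infty$. By invariance of the energy under the Lagrangian flow, $E_L(\ga(t),\dga(t))=E_L(\ga(t_0),\dga(t_0))\le M_0$ for every $t\in[0,T]$, and the coercivity estimate above then gives $\tfrac a2|\dga(t)|_{\ga(t)}^2\le M_0+c_0$, i.e.
\[
|\dga(t)|_{\ga(t)}\;\le\;\sqrt{2\,(M_0+c_0)/a}\;=:\;A_0(C)\qquad\text{for all }t\in[0,T],
\]
which is the claim (note $R$, $M_0$, and hence $A_0$ depend only on $C$, since $L$ is fixed).

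I do not expect a genuine obstacle. The only mildly delicate points are deriving the coercivity (equivalently, uniform superlinearity) of $E_L$ from hypothesis (i) --- which one could instead quote as a well-known fact --- and the observation that the hypothesis $A_L(\ga)<C\,T$ is used only to locate a single time $t_0$ at which the speed, and therefore the energy, is controlled; conservation of energy then propagates the bound over all of $[0,T]$ with no dependence on the size of $T$.
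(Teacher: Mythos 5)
Your proof is correct and follows essentially the same route as the paper: use superlinearity together with the action bound to locate a single time $t_0$ where $|\dga(t_0)|$ is controlled, bound the energy at that point, and then combine energy conservation with the uniform coercivity $E_L(x,v)\ge \tfrac a2|v|_x^2-\max_M L(\cdot,0)$ to propagate the speed bound over all of $[0,T]$. The only cosmetic difference is that you obtain the coercivity of $E_L$ by a Taylor estimate on $s\mapsto L(x,sv)$ and bound $E_L$ on $\{|v|_x\le R\}$ by compactness, whereas the paper integrates $\tfrac{d}{ds}E_L(x,sv)$ directly and records an explicit upper bound via the modulus $g$; both are equivalent in substance.
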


\begin{proof}
The Euler-Lagrange flow preserves the {\it energy function} 
\begin{equation}\label{EL}
E_L:= v \cdot \partial_v L -L.
\end{equation}
We have that
\begin{align}
\hskip -0.7cm\forall s\ge0\qquad
\tfrac{d\,}{ds}E_L(x,sv)\big|_s&=s\, v\cdot \partial_{vv}L(x,v)\cdot v \ge s\, a |v|_x^2.
\notag\\
E_L(x,v) &= E_L(x,0)+\int_0^{1} \tfrac{d\,}{ds}E_L(x,s v) \, ds
\notag\\
&\ge \min_{y\in M}E_L(y,0)+ \tfrac 12 a |v|_x^2.
\label{lbel}
\end{align}
Let
$$
g(r):= \sup\big\{w\cdot \partial_{vv}L(x,v)\cdot w \,:\, |v|_x\le r,\,|w|_x= 1\big\}.
$$
Then $g(r)\ge a$ and 
\begin{equation}\label{ubel}
E_L(x,v)\le \max_{z\in M}E_L(z,0) + \tfrac 12\, g(|v|_x)\, |v|_x^2.
\end{equation}

By the superlinearity there is $B>0$ such that $L(x,v)>|v|_x-B$ for all $(x,v)\in TM$.
Since $A_L(\ga)<C\,T$, the mean value theorem implies that there is $t_0\in]0,T[$ 
such that $|\dga(t_0)|<B+C$. Then~\eqref{ubel} gives an upper bound 
on the energy of $\ga$ and~\eqref{lbel} bounds the speed of $\ga$.

\end{proof}

 For $x,\,y\in M$ and $T>0$ define
 \begin{equation*}\label{defctxy}
 \cC_T(x,y):=
 \{\,\ga:[0,T]\to M\,|\, \ga \text{ is absolutely continuous}, \ga(0)=x,\,\ga(T)=y\,\}.
 \end{equation*}

 \begin{Corollary}\label{CAPB}\quad
 
 There exists $A_1>0$ such that if  $x,\,y\in M$ and $\ga\in\cC_T(x,y)$ is
 a solution of the Euler-Lagrange equation with 
 $$
 A_{L+c}(\ga) \le \Phi_c(x,y) + \max\{T,d(x,y)\},
 $$
 where $c=c(L)$,
 then
 \begin{enumerate}[(a)]
 \item $T\,\ge\,\tfrac 1A_1\; d(x,y)$.
 \label{CAPB1}
 \item  \;$|\dga(t)|\,\le\,A_1$ \; for all $t\in[0,T]$.
 \label{CAPB2}
 \end{enumerate}
 \end{Corollary}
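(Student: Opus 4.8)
The plan is to derive both statements from the a priori bound of Lemma~\ref{priori} together with the superlinearity and the elementary inequality $\Phi_c(x,y)\le A_{L+c}(\sigma)$ for the geodesic-like short curve $\sigma$. First I would fix a constant bounding the cost of travelling between nearby points: since $M$ is compact and $L$ is $C^2$, there is $\kappa>0$ such that for any $x,y\in M$ one can join $x$ to $y$ by a curve $\sigma:[0,d(x,y)]\to M$ of unit speed with $A_{L+c}(\sigma)\le \kappa\, d(x,y)$; consequently $\Phi_c(x,y)\le \kappa\, d(x,y)$, and also $\Phi_c(x,y)\ge -\kappa'\,\text{diam}(M)$ for a uniform $\kappa'$ (using \eqref{cloga} applied to a closed curve through $x$ and $y$, or simply the lower bound $L+c\ge \text{const}$ along bounded-speed curves combined with superlinearity). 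Hence the hypothesis
$$
A_{L+c}(\ga)\le \Phi_c(x,y)+\max\{T,d(x,y)\}
$$
yields $A_{L+c}(\ga)\le \kappa\,d(x,y)+\max\{T,d(x,y)\}$, and therefore $A_L(\ga)\le (\kappa+1)\,\max\{T,d(x,y)\} + |c|\,T$.

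For part \eqref{CAPB1}: by superlinearity choose $B$ with $L(x,v)\ge |v|_x - B$ everywhere, so
$$
A_L(\ga)=\int_0^T L(\ga,\dga)\,dt \ge \int_0^T |\dga|\,dt - BT \ge d(x,y) - BT.
$$
Combining with the upper bound $A_L(\ga)\le (\kappa+1)\max\{T,d(x,y)\}+|c|T$, I distinguish the two cases of the $\max$. If $T\ge d(x,y)$ there is nothing to prove (take $A_1\ge 1$). If $d(x,y)>T$, then $d(x,y)-BT\le (\kappa+1)\,d(x,y)+|c|T$ is vacuous, so instead I should run the estimate at speed scale: more carefully, $d(x,y)\le \int_0^T|\dga| \le \big(\int_0^T|\dga|^2\big)^{1/2}T^{1/2}$, and a lower energy bound of the form $E_L(\ga)\ge \tfrac12 a|\dga|^2 + \min E_L(\cdot,0)$ from \eqref{lbel} together with an upper bound on $E_L(\ga)$ (obtained exactly as in Lemma~\ref{priori}, via the mean value theorem giving some $t_0$ with $|\dga(t_0)|\le B+ A_L(\ga)/T \le B+(\kappa+1)\max\{T,d(x,y)\}/T+|c|$) controls $|\dga|$ pointwise; feeding this back gives $d(x,y)\le (\sup|\dga|)\,T$, i.e. $T\ge d(x,y)/A_1$. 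The main obstacle here is that the mean value estimate for $|\dga(t_0)|$ involves $\max\{T,d(x,y)\}/T$, which blows up when $d(x,y)\gg T$; to close the loop one observes that this forces $\sup|\dga|$ to be large, which in turn via $d(x,y)\le \sup|\dga|\cdot T$ and superlinearity (the action grows like $(\sup|\dga|)^{2}$-ish along such a curve) contradicts the linear-in-$\max$ upper bound unless $T$ is comparable to $d(x,y)$ — so the genuinely dangerous regime is self-defeating, and a clean bookkeeping of constants rules it out.

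For part \eqref{CAPB2}: once \eqref{CAPB1} is in hand, $\max\{T,d(x,y)\}\le (A_1+1)T$, so the hypothesis gives $A_L(\ga)\le C\,T$ with $C=(\kappa+1)(A_1+1)+|c|$ a universal constant depending only on $L$. Now Lemma~\ref{priori} applies verbatim and produces $A_0=A_0(C)$ with $|\dga(t)|\le A_0$ for all $t$; set $A_1$ (enlarging the earlier choice) to be $\max\{A_1,A_0\}$. I expect the bookkeeping in part (a) — tracking which constant depends on what and handling the $d(x,y)>T$ branch without circularity — to be the only real subtlety; part (b) is then essentially an invocation of the already-proved a priori bound.
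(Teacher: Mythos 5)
The overall architecture (bound the action, invoke superlinearity, then apply Lemma~\ref{priori}) matches the paper, and your treatment of part~\eqref{CAPB2} once \eqref{CAPB1} is known is essentially the paper's. But the proof of \eqref{CAPB1} in the regime $d(x,y)>T$ has a genuine gap, which you yourself flag and then do not close.

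The precise issue: you fix the superlinearity constant with slope~$1$, i.e.\ $L(x,v)\ge|v|_x-B$, giving $A_L(\ga)\ge d(x,y)-BT$. Against the upper bound $A_L(\ga)\le(\kappa+1+|c|)\,d(x,y)$ this is indeed vacuous, because the slope on the left does not exceed the coefficient of $d(x,y)$ on the right. Your proposed repair via the energy function does not obviously close either: the mean-value point $t_0$ gives $|\dga(t_0)|\lesssim d(x,y)/T$, which through \eqref{lbel}--\eqref{ubel} propagates to a uniform bound $\sup|\dga|\lesssim \sqrt{g}\,\cdot d(x,y)/T$; feeding this back into $d(x,y)\le(\sup|\dga|)T$ reproduces $d(x,y)\lesssim d(x,y)$ with a constant that has no reason to be $<1$. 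So the loop is a fixed point, not a contradiction, and the ``self-defeating'' claim is unsubstantiated.

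The key observation you are missing is that superlinearity gives a linear lower bound with an \emph{arbitrary} slope, and that slope should be chosen to dominate the constant $\kappa$ (equivalently $\ell(1)+c$ in the paper's notation) appearing in the upper bound. This is exactly what the paper does: set $D=\ell(1)+c+2$ and take $B$ with $L(x,v)+c>D|v|-B$. Then
\begin{align*}
[\ell(1)+c]\,d(x,y)\ge A_{L+c}(\eta)\ge\Phi_c(x,y)\ge A_{L+c}(\ga)-d(x,y)\ge D\,d(x,y)-BT-d(x,y),
\end{align*}
so $T\ge \tfrac{1}{B}\,d(x,y)$; the slope $D$ is tuned so that after cancellation the coefficient of $d(x,y)$ is exactly~$1$. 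With your notation, choosing $A=\kappa+2+|c|$ in $L(x,v)\ge A|v|_x-B_A$ yields $(A-\kappa-1-|c|)\,d(x,y)\le B_A T$, i.e.\ $T\ge d(x,y)/B_A$, which is a one-line fix that makes the whole energy detour unnecessary. With this repair your argument becomes essentially identical to the paper's.
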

 
 \begin{proof}
 First suppose that  $d(x,y)\le T$. Then item~\eqref{CAPB1} holds with $A_1=1$.
 Let
 \begin{equation}\label{llr}
 \ell(r):=|c|+\sup\{\,L(x,v)\,|\, (x,v)\in TM,\, |v|\le r\,\}.
 \end{equation}
 Since $d(x,y)\le T$, there exists a $C^1$ curve $\eta:[0,T]\to M$  joining $x$ to $y$ with $|\deta|\le 1$.
 We have that 
 $$
 A_{L+c}(\ga)\le \Phi_c(x,y)+T\le A_{L+c}(\eta)+T
 \le \big(\ell(1)+c\big)\,T+ T.
 $$
 Then item~\eqref{CAPB2} holds for $A_1=A_0(|\ell(1)+c+1|)$ where $A_0$ is from Lemma~\ref{priori}.
 
 Now suppose that $d(x,y)\ge T$.
 Let $\eta:[0,d(x,y)]\to M$ be a minimal geodesic with 
 $|\deta|\equiv 1$ joining $x$ to $y$. Let 
 $D:=\ell(1)+c +2>0$. From the superlinearity property there is 
 $B>1$ such that
 $$
 L(x,v) + c > D\, |v| - B, \qquad \forall (x,v)\in TM.
 $$
 Then
 \begin{align}
 [\ell(1)+c]\;d(x,y) 
    &\ge A_{L+c}(\eta) \ge \Phi_c(x,y)         
    \label{ap-1}\\
    &\ge A_{L+c}(\ga)-d(x,y)
    \label{ap-2}\\
    &\ge\int_0^T\bigl( D\;|\dga|-B\,\bigr)\,dt - d(x,y)
    \notag\\
    &\ge D\;d(x,y) - B\,T - d(x,y).
    \notag
 \end{align}
 Hence
 $$
 T \ge \tfrac{D-\ell(1)-c-1}{B}\; d(x,y) = \tfrac 1B \; d(x,y).
 $$
 This implies item~\eqref{CAPB1}.
 From~\eqref{ap-1} and~\eqref{ap-2}, we get that
 \begin{align*}
 A_L(\ga)&\le \bigl[\,\ell(1)+c+1\,\bigr]\;d(x,y)-c\,T ,
        \\
         &\le \bigl\{\,B\,[\,\ell(1)+c+1\,]-c\,\bigr\}\;T.
 \end{align*}
 Then Lemma~\ref{priori} completes the proof.
 
 \end{proof}

 We say that a curve $\ga:[0,T]\to M$ is a {\it Tonelli minimizer} if it minimizes the action functional
 on $\cC_T(\ga(0),\ga(T))$, i.e. if it is a minimizer with fixed endpoints and fixed time interval.

 \begin{Corollary}\label{Cbtm}
 There is $A>0$ such that if $x,\,y\in M$ and $\eta_n\in\cC_{T_n}(x,y)$, $n\in\na^+$ is a Tonelli minimizer
 with
 $$
 A_{L+c}(\eta_n)\le \Phi_c(x,y)+\tfrac1n,
 $$
 then there is $N_0>0$ such that $\forall n>N_0$, $\forall t\in[0,T_n]$, $|\deta_n(t)|<A$.
 \end{Corollary}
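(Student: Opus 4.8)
The plan is to split into the two cases $x\neq y$ and $x=y$, handling the first by a direct appeal to Corollary~\ref{CAPB} and the second by comparison with a constant curve together with the a priori bound Lemma~\ref{priori}. Since a Tonelli minimizer is a $C^2$ solution of the Euler--Lagrange equation, $\eta_n$ is admissible in both results. I would fix once and for all the constant $A_1>0$ from Corollary~\ref{CAPB}, the number $C_1:=\max_{z\in M}|L(z,0)|$, and the constant $A_0:=A_0(C_1+1)$ furnished by Lemma~\ref{priori} with $C=C_1+1>0$; the corollary will then hold with the uniform constant $A:=1+\max\{A_1,A_0\}$.

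\emph{Case $x\neq y$.} Here I would simply take $N_0>1/d(x,y)$. Then for every $n>N_0$ we have $\tfrac1n<d(x,y)\le\max\{T_n,d(x,y)\}$, whence
$$
A_{L+c}(\eta_n)\ \le\ \Phi_c(x,y)+\tfrac1n\ \le\ \Phi_c(x,y)+\max\{T_n,d(x,y)\},
$$
which is exactly the hypothesis of Corollary~\ref{CAPB} for $\ga=\eta_n$ and $T=T_n$; its conclusion~\eqref{CAPB2} gives $|\deta_n(t)|\le A_1<A$ for all $t\in[0,T_n]$.

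\emph{Case $x=y$.} This is the only delicate point, because Corollary~\ref{CAPB} is of no help: its required inequality would read $\tfrac1n\le\max\{T_n,0\}=T_n$, and there is no lower bound on $T_n$ (indeed $T_n\to0$ can occur). Instead I would compare $\eta_n$ with the constant curve $\si_n\equiv x$ on $[0,T_n]$, which lies in $\cC_{T_n}(x,x)$. Since $\eta_n$ minimizes $A_L$ over $\cC_{T_n}(x,x)$,
$$
A_L(\eta_n)\ \le\ A_L(\si_n)\ =\ L(x,0)\,T_n\ \le\ C_1\,T_n\ <\ (C_1+1)\,T_n,
$$
so Lemma~\ref{priori} applied with $C=C_1+1$ yields $|\deta_n(t)|<A_0\le A$ for all $t\in[0,T_n]$, and this holds for every $n$; thus $N_0=1$ works. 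Note that in this case one does not even use the hypothesis on $A_{L+c}(\eta_n)$.

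Combining the two cases proves the statement with the stated uniform $A$, while $N_0$ is allowed to depend on $x,y$ through $d(x,y)$. The main --- and essentially only --- obstacle is exactly the case $x=y$ with $T_n\to0$, which is why one must bypass Corollary~\ref{CAPB} there and fall back on Lemma~\ref{priori}, which is insensitive to the time scale. The one background fact to make sure of is the classical Tonelli regularity statement that a Tonelli minimizer solves the Euler--Lagrange equation and is $C^2$, so that Lemma~\ref{priori} and Corollary~\ref{CAPB} legitimately apply to $\eta_n$.
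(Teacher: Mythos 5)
Your proof is correct and takes essentially the same route as the paper's: split into the cases $d(x,y)>0$ (handled by Corollary~\ref{CAPB} once $1/n<d(x,y)$) and $x=y$ (comparison with the constant curve followed by Lemma~\ref{priori}), then take the maximum of the two constants. The only difference is the cosmetic replacement of $C=\sup_z|L(z,0)|$ by $C_1+1$ to guarantee the strict inequality $A_L(\eta_n)<C\,T_n$ demanded by Lemma~\ref{priori}, a small tidying of the paper's argument.
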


 \begin{proof}
 If $d(x,y)>0$ then for $n$ large enough $d(x,y)>\tfrac 1n$.
 In this case Corollary~\ref{CAPB} implies the result with the constant $A_1$.
 If $d(x,y)=0$ let $\xi_n:[0,T_n]\to\{x\}$ be the constant curve.
 Since $\eta_n$ is a Tonelli minimizer, we have that
 $$
 A_L(\eta_n)\le A_L(\xi_n) =\int_0^{T_n}L(x,0)\,dt\le |L(x,0)|\, T_n.
 $$
 Lemma~\ref{priori} implies that $|\deta_n|\le A_0(C)$ with $C=\sup_{x\in M}|L(x,0)|$.
 Now take
 \linebreak
  $A=\max\{A_0(C),A_1\}$.
  
 \end{proof}

\begin{Lemma}\label{ALinv}\quad

If $(x,v)$ is a static vector then $\ga:\re\to M$, $\ga(t)=\pi\vr_t(x,v)$ 
is a static curve, i.e. the Aubry set $\cA(L)$ is invariant.
\end{Lemma}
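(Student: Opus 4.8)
The plan is to show that a static curve is automatically an orbit of the Euler--Lagrange flow defined for all time, so that the set $\cA(L)$ of static vectors is a union of orbits and hence flow-invariant.

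By the definition of $\cA(L)$, the vector $(x,v)$ equals $(\sigma(t_0),\dot\sigma(t_0))$ for some static curve $\sigma:\re\to M$ and some $t_0\in\re$. The defining relation \eqref{static2} only involves pairs $s<t$, so the time-translated curve $t\mapsto\sigma(t+t_0)$ is again static; replacing $\sigma$ by this translate I may assume $(\sigma(0),\dot\sigma(0))=(x,v)$. Recall (Ma\~n\'e \cite[pp.~142--143]{Ma7}, or the equivalence recorded in \eqref{aubry}) that a static curve is semi-static: for every $s<t$ one has $A_{L+c}(\sigma|_{[s,t]})=\Phi_c(\sigma(s),\sigma(t))$, with $c=c(L)$. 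Since $\Phi_c(\sigma(s),\sigma(t))$ is the infimum of $A_{L+c}$ over all absolutely continuous curves from $\sigma(s)$ to $\sigma(t)$ of \emph{arbitrary} duration --- in particular over $\cC_{t-s}(\sigma(s),\sigma(t))$ --- the restriction $\sigma|_{[s,t]}$ is a Tonelli minimizer. By the classical regularity theory of Tonelli minimizers it is therefore a $C^2$ solution of the Euler--Lagrange equation, and then Corollary~\ref{CAPB} applies (its hypothesis holds since $\max\{t-s,d(\sigma(s),\sigma(t))\}\ge 0$) and bounds $|\dot\sigma|$ by $A_1$ on $[s,t]$. As $s<t$ were arbitrary, $\sigma$ is a global $C^2$ extremal with bounded speed.

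Consequently $\vr_t(x,v)$ is defined for all $t\in\re$ and $\pi\vr_t(x,v)=\sigma(t)$ by uniqueness of Euler--Lagrange solutions with given initial condition; that is, the curve $\ga$ in the statement equals $\sigma$, which is static. Hence every orbit through a static vector consists of static vectors, and $\cA(L)$ is invariant. The one step that is not pure bookkeeping is the identification of a semi-static curve --- a minimizer over curves of arbitrary duration with fixed endpoints --- with a Tonelli minimizer, and thence with a smooth extremal of the Euler--Lagrange flow; everything else follows from the translation-invariance of the definition of static curve.
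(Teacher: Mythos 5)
Your proof is correct for the lemma as literally stated, but it is genuinely different from the paper's, and in effect reduces the lemma to a near-tautology under the global convention. You read the definition of ``static curve'' as the paper writes it (a curve $\sigma:\re\to M$ satisfying \eqref{static2}), note that static implies semi-static (the equivalence recorded before \eqref{aubry}), so that each restriction $\sigma|_{[s,t]}$ realizes the infimum $\Phi_c(\sigma(s),\sigma(t))$ and in particular is a Tonelli minimizer in $\cC_{t-s}$; by Tonelli regularity it is a $C^2$ Euler--Lagrange solution, hence by ODE uniqueness $\sigma(t)=\pi\vr_t(x,v)=\ga(t)$, and $\ga$ is static because $\sigma$ is. This is a shorter, cleaner argument, and its one nontrivial ingredient (Tonelli regularity) is something the paper itself relies on implicitly in Corollary~\ref{Cbtm}, which feeds a ``Tonelli minimizer'' into Lemma~\ref{priori}. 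Two remarks. First, the appeal to Corollary~\ref{CAPB} is not needed: once $\sigma$ is known to be a $C^2$ extremal defined on all of $\re$, it is automatically the flow orbit, so $\vr_t(x,v)$ is defined for every $t$ without invoking speed bounds (completeness of the Euler--Lagrange flow on a compact energy level also suffices). Second, and more importantly, the paper's proof starts from the weaker hypothesis that $\ga|_{[a,b]}$ is static on a \emph{finite} interval and propagates staticness to all of $\re$ by a limiting argument: Tonelli minimizers $\eta_n\in\cC_{T_n}(\ga(b),\ga(a))$ with action converging to $\Phi_c(\ga(b),\ga(a))$ are extracted, their endpoint velocities are shown to converge to $\dot\ga(b)$ and $\dot\ga(a)$ via a corner (non-$C^1$) contradiction that would otherwise force $\Phi_k(\ga(a),\ga(a))=-\infty$ for some $k>c(L)$, and then passage to the limit gives staticness of $\ga|_{[a-s,b+s]}$ for all $s>0$. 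That version does not presuppose a globally defined static curve, so it proves something strictly stronger than what your route gives; your argument essentially shows that under the paper's global definition the lemma is close to automatic, while the paper's argument is what makes the lemma useful when staticness is only known locally.
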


 \begin{proof}\quad
 
 Let $\ga(t)=\pi\,\vr_t(x,v)$ and suppose that $\ga|_{[a,b]}$ is static.
 We have to prove that all $\ga|_\re$ is static. 
 Let
 $\eta_n\in\cC_{T_n}(\ga(b),\ga(a))$
 be a Tonelli minimizer  with
  $$
 A_{L+c}(\eta_n)<\Phi_c(\ga(b),\ga(a))+\tfrac 1n.
 $$
 
 By Corollary~\ref{Cbtm}, for $n$ large enough,
 $|\deta_n|<A$. We can assume that 
 $\deta_n(0)\to w$. Let $\xi(s)=\pi\,\vr_s(w)$. If $w\ne \dga(b)$
 then for some $\e>0$ the curve $\ga|_{[b-\e,b]}*\xi|_{[0,\e]}$ is not $C^1$, and
 hence  it
 can not be a (Tonelli) minimizer of $A_{L+c}$ in 
 $\cC_{2\e}\big(\ga(b-\e),\xi(\e)\big)$. Thus
 $$
 \Phi_c(\ga(b-\e),\xi(\e))
        < A_{L+c}(\ga|_{[b-\e,b]})
        + A_{L+c}(\xi|_{[0,\e]}).
 $$
 \begin{align*}
 &\Phi_c(\ga(a),\ga(a))
    \le \Phi_c(\ga(a),\ga(b-\e))
         +\Phi_c(\ga(b-\e),\xi(\e))
         +\Phi_c(\xi(\e),\ga(a))
    \\
    &\;< A_{L+c}(\ga_{[a,b-\e]})
       +A_{L+c}(\ga|_{[b-\e,b]})
       +A_{L+c}(\xi|_{[0,\e]})
       +\liminf_n A_{L+c}(\eta_n|_{[\e,T_n]})
    \\
    &\;\le A_{L+c}(\ga|_{[a,b]})
      +\lim_nA_{L+c}\bigl(\,\eta_n|_{[0,\e]}*\eta_n|_{[\e,T_n]}\bigr)       
    \\
    &\;= -\Phi_c(\ga(b),\ga(a))+\Phi_c(\ga(b),\ga(a))
    =0.
 \end{align*}
 Thus there is a closed curve, from $\ga(a)$ to itself, with negative
 $L+c$ action, and also negative $L+k$ action for some $k>c(L)$.  
 Concatenating the curve with itself many times  
 shows that $\Phi_k(\ga(a),\ga(a))=-\infty$.
 By~\eqref{defcL} this implies that $k\le c(L)$, which is 
 a contradiction.
 Thus $w=\dga(b)$ and similarly $\lim_n\deta_n(T_n)=\dga(a)$.
 
 If $\limsup T_n<+\infty$, we can assume that $\tau=\lim_n T_n>0$
 exists. In this case $\ga$ is a semi-static
 periodic orbit of period $\tau+b-a$ and then $\ga|_\re$  is static.
 
 Now suppose that $\lim_n T_n=+\infty$.
 If $s>0$, we have that
 \begin{align*}
 A_{L+c}&(\ga|_{[a-s,b+s]})
   +\Phi_c(\ga(b+s),\ga(a-s))\le
   \\
   &\le\lim_n\big\{\,A_{L+c}(\eta_n|_{[T_n-s,T_n]})
       +A_{L+c}(\ga|_{[a,b]})
       \begin{aligned}[t]
       &+A_{L+c}(\eta_n|_{[0,s]})\,\big\}\\
       &+\Phi_c(\ga(b+s),\ga(a-s))
       \end{aligned}
    \\
    &\le\begin{aligned}[t]
    &-\Phi_c(\ga(b),\ga(a)) \\
    &+\lim_n \big\{\,A_{L+c}(\eta_n|_{[0,s]})
                    +A_{L+c}(\eta_n|_{[s,T_n-s]})
                    +A_{L+c}(\eta_n|_{[T_n-s,T_n]})
             \,\big\}
    \end{aligned}         
    \\
    &\le -\Phi_c(\ga(b),\ga(a))+\Phi_c(\ga(b),\ga(a))              
    =0.
 \end{align*}
 Thus $\ga_{[a-s,b+s]}$ is static for all $s>0$.

\end{proof}

\section{Optimal specifications.}\label{SS2}

Here lemma~\ref{Lper} and proposition~\ref{Pspec} follow
arguments by X. Bressaud and A. Quas~\cite{BQ}.

Let $A\in\{0,1\}^{M\times M}$ be a $M\times M$ matrix of with entries in $\{0,1\}$.
The subshift  of finite type $\Si_A$ associated to $A$ is the set
$$
\Si_A=\big\{\;(x_i)_{i\in\Z}\in \{0,1\}^\Z \;\big|\quad \forall i\in\Z\quad A(x_i,x_{i+1})=1\;\big\},
$$
endowed with the metric
$$
d(x,y) = 2^{-i}, \qquad i=\max\{\;k\in\na\;|\; x_i=y_i\;\; \forall |i|\le k\;\}
$$
and the {\it shift transformation}
$$
\si:\Si_A\to\Si_A, \qquad \forall i\in\Z\quad \si(x)_i = x_{i+1}.
$$

\bigskip

\begin{Lemma}\label{Lper}
Let $\Si_A$ be a shift of finite type  with $M$ symbols and topological
entropy $h$. Then $\Si_A$ contains a periodic orbit of period at most $1+M \ee^{1-h}$.
\end{Lemma}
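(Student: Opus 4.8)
The statement is a pigeonhole/counting argument relating periods of short periodic words in $\Si_A$ to the topological entropy $h = \lim_n \tfrac1n \log N_n$, where $N_n$ is the number of admissible words of length $n$. The plan is to show that if $\Si_A$ had \emph{no} periodic orbit of period $\le p$, then the map sending a length-$n$ admissible word to its ``first-return structure'' would force $N_n$ to grow too slowly, contradicting the definition of $h$ once $p$ is chosen as in the statement.

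More concretely, I would argue as follows. Suppose $\Si_A$ contains no periodic orbit of period $\le p$. Fix a large length $n$ and count admissible words $w = (w_0,\dots,w_{n-1})$ of length $n$ by tracking the positions at which a symbol is repeated. The key combinatorial point (this is the heart of the Bressaud--Quas estimate) is that, because there is no short periodic orbit, between any two occurrences of the same symbol the intervening block must have length $> p$; equivalently, in any window of $p$ consecutive symbols all entries are distinct, so there are at most $M$ symbols available but the ``no short cycle'' hypothesis severely restricts how words extend. One then builds an injection from admissible length-$n$ words into a set whose cardinality is controlled by $M \cdot (\text{number of ways to place repeats})$, yielding a bound of the shape $N_n \le C(n)\, q^{\,n}$ for an explicit $q = q(M,p)$ strictly less than $e^{h}$ when $p > 1 + M e^{1-h}$. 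Taking $\tfrac1n\log(\cdot)$ and letting $n\to\infty$ gives $h \le \log q < h$, a contradiction, so a periodic orbit of period $\le 1 + M e^{1-h}$ must exist. (An alternative, perhaps cleaner, route: use that the transition matrix $A$ has spectral radius $e^{h}$, so $\operatorname{tr}(A^k) = \sum_i \lambda_i^k$ detects periodic points of period $k$; then a direct estimate $\sum_{k=1}^{p}\operatorname{tr}(A^k) \ge$ something positive for $p$ as in the statement shows some $\operatorname{tr}(A^k) > 0$, giving a periodic orbit. One must be careful that $\operatorname{tr}(A^k)>0$ gives a periodic point whose \emph{minimal} period divides $k$, hence is $\le k \le p$.)

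The delicate point — and the step I expect to be the main obstacle — is getting the numerical constant exactly right: the bound $1 + M e^{1-h}$ is sharp enough that one cannot afford slack in the counting. In the injection approach this means carefully accounting for the ``$+1$'' (the trivial word / fixed point case) and extracting the factor $e^{1-h}$ rather than, say, $e^{-h}$; in the trace approach it means a precise lower bound on $\sum_{k=1}^{p}\operatorname{tr}(A^k)$ in terms of $e^{h}$ and $M$, using $\operatorname{tr}(A^k)\ge (e^{h})^k - (M-1)$ (separating the Perron eigenvalue $e^h$ from the other at most $M-1$ eigenvalues, each of modulus $\le e^h$) and then choosing $p$ so that this is positive. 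Either way the entropy must be interpreted correctly when $\Si_A$ is reducible or $h$ is attained by a proper subshift; handling that degeneracy (e.g.\ passing to an irreducible component of maximal entropy, where the Perron--Frobenius theory applies cleanly) is the remaining technical care needed before the constant-chasing goes through.
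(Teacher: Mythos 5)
There is a genuine gap: you have correctly observed the easy half of the argument (if the shortest period is $k+1$, then any admissible $k$-block has all distinct symbols), but the decisive combinatorial step is missing. The paper's proof hinges on a sharper claim: \emph{a $k$-block is uniquely determined by the \emph{set} of symbols it contains}. The reason is that if $u$ and $v$ were two distinct admissible $k$-blocks using the same $k$ symbols, then some consecutive pair $a,b$ in $v$ would appear in the opposite order in $u$; the segment of $u$ running from $b$ to $a$ then wraps around (via the $a\to b$ transition from $v$) to produce a periodic word of period $\le k$, contradicting minimality. This gives the exact bound $|\cB_k|\le\binom{M}{k}$, and combined with the standard fact $|\cB_k|\ge \ee^{hk}$ (the entropy of a subshift is an infimum over $n$ of $\tfrac1n\log|\cB_n|$), Stirling's estimate $\binom{M}{k}\le (M\ee/k)^k$ yields $k\le M\ee^{1-h}$ directly. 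Your sketch never produces this injection into $k$-subsets and never reduces to the single length $n=k$; instead you propose bounding $N_n$ for all $n$ by some $q^n$ with $q<\ee^h$, which requires a separate (and harder) argument you do not carry out, and which you would also need to make quantitatively sharp enough to recover the constant $M\ee^{1-h}$ rather than a weaker one.

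Your alternative trace-based route also does not survive inspection as stated. The inequality $\operatorname{tr}(A^k)\ge(\ee^h)^k-(M-1)$ is false in general: the non-Perron eigenvalues have modulus bounded by $\ee^h$, not by $1$, so the ``error'' term $\sum_{i\ge2}\lambda_i^k$ can be as large as $(M-1)\ee^{hk}$ in magnitude and can cancel the main term. In the reducible case there can even be several eigenvalues of modulus $\ee^h$. You flag these issues yourself, but they are not a matter of ``technical care'' to be added later; without a genuinely different estimate the trace argument does not close. The clean route is the symbol-set injection above, which bypasses spectral considerations entirely.
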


\begin{proof}
Let $k+1$ be the period of the shortest periodic orbit in $\Si_A$.
We claim that a word of length $k$ in $\Si_A$ is determined by the set
of symbols that it contains.
First note that since there are no periodic orbits of period $k$ or less,
any allowed $k$-word must contain $k$ distinct symbols.
Now suppose that $u$ and $v$ are two distinct words of length $k$ 
in $\Si_A$ containing the same symbols. Then, since the words are different, 
there is a consecutive pair of   symbols, say $a$ and $b$, in $v$ which 
occur in the opposite order (not necessarily consecutively) in $u$.
Then the infinite concatenation of the segment of $u$ starting at $b$ 
and ending at $a$  gives a word in $\Si_A$ of period at most $k$, which 
contradicts the choice of $k$.

It follows that there are at most $\binom Mk$ words of length $k$.
Using the basic properties of topological entropy \cite[4.1.8]{LM1}
\begin{align*}
\ee^{hk}\le \binom Mk\le \frac {M^k}{k!}
\le \left(\frac {M \ee} k\right)^k.
\end{align*}
Taking $k$-th roots, we see that $k\le M\ee^{1-h}$.

\end{proof}

From now on we assume that the Ma\~n\'e set $\cN(L)$ is hyperbolic.
The definition of  a specification or pseudo-orbit appears in \ref{B8} in  appendix~\ref{ashadowing}.

\begin{Proposition}\label{Pspec}\quad

There are $C,\la>0$ such that for   $T>1$ large  there is 
$(\Th,\frak T)=(\{\th_i\},\{t_i\})$ a periodic T-specification in $\cA(L)$,
with $P=P_T$ jumps $(\th_i,t_i)=(\th_{i+P},t_{i+P})$,
and  period $\le 4T P_T$
such that 
\begin{gather}
\lim\nolimits_{T\to\infty}\tfrac 1T\log P_T =0,
\label{Psubexp}\\
\forall i\in\Z \mod P_T\qquad d\big (\vr_{t_i}(\th_{i}), \vr_{t_i}(\th_{i-1})\big )\le C\, \ee^{-\la T}.
\end{gather}

\end{Proposition}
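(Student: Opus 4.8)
The plan is to build the periodic specification in $\cA(L)$ by combining the hyperbolic structure of $\cN(L)$ with a symbolic coding of a compact piece of the Aubry set and then applying Lemma~\ref{Lper}. First I would fix a small $\de>0$ (a scale on which the hyperbolicity of $\cN(L)$ gives a shadowing/specification property, via the refined shadowing lemma of the appendix) and a flow box cover of $\cA(L)\subset\cN(L)$ by finitely many boxes of diameter $\ll\de$. Choosing a section transverse to the flow through each box, the first-return dynamics on the union of sections is semi-conjugate to a subshift of finite type $\Si_A$ on $M$ symbols, where $M$ is the number of boxes; here one must be a bit careful to get a genuine SFT, so I would instead work with a Markov-type partition at scale $\de$ and a fixed return-time window. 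The topological entropy $h$ of $\Si_A$ is bounded above by the topological entropy of the flow restricted to $\cA(L)$ (times a bounded return-time factor), and a key input from the generic hyperbolic case is that this entropy is controlled — in fact the whole point, as the introduction explains, is that entropy estimates enter only through Bressaud--Quas-type bounds, so I would keep track of $h$ as a parameter and feed it into Lemma~\ref{Lper}.

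The next step is quantitative: I would not apply Lemma~\ref{Lper} to $\Si_A$ directly but to the SFT $\Si_{A^{(N)}}$ obtained by recoding in blocks of length $N=N_T$, whose alphabet has size $\le M^N$ and whose entropy is $Nh$. Lemma~\ref{Lper} then gives a periodic word of combinatorial length at most $1+M^N\ee^{1-Nh}$; the balance is that if $h$ is strictly smaller than $\log M$, this is exponentially small compared with $M^N$, but more importantly, by choosing $N_T$ growing like a suitable slowly-increasing function of $T$ (e.g. $N_T\sim \log T$ or $N_T\sim T^{1/2}$, tuned so that the resulting period $P_T$ satisfies $\tfrac1T\log P_T\to 0$) I get a periodic pseudo-orbit whose number of jumps $P_T$ is sub-exponential in $T$. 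Concretely: one periodic point of $\Si_{A^{(N)}}$ of low period lifts to a periodic sequence of flow boxes, and reading off the corresponding sequence of transverse section points $\th_i$ together with the flow-times $t_i$ between them (each $t_i$ in a bounded window, hence total period $\le 4TP_T$ after possibly padding to force each segment to have length $\approx T$) produces the desired $(\Th,\frak T)$.

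Then I would verify the two displayed properties. For \eqref{Psubexp}: $\log P_T\le N_T\log M + (1-N_T h)$ up to lower-order terms, and with $N_T=o(T)$ this is $o(T)$, giving $\tfrac1T\log P_T\to0$. For the jump estimate, the mechanism is that consecutive genuine orbit segments in $\cA(L)$ that pass through the same flow box at scale $\de$ shadow each other, and by the hyperbolic shadowing lemma (Proposition~\ref{unifhip} / the appendix) the shadowing distance contracts exponentially in the time spent near $\cA(L)$: spending time $\gtrsim T$ between closings forces $d(\vr_{t_i}(\th_i),\vr_{t_i}(\th_{i-1}))\le C\ee^{-\la T}$ for the stable/unstable rates $\la$ of the hyperbolic splitting. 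This is where Definition~\ref{defhipL} is used directly, and where I would invoke the local product structure / $\la$-lemma to say that a point matching another to within $\de$ in the box, and then flowed for time $\approx T$, is within $C\ee^{-\la T}$. Finally I must check the specification is genuinely \emph{in} $\cA(L)$ (not merely in $\cN(L)$): since $\cA(L)$ is flow-invariant and closed, and the boxes and sections were chosen to cover $\cA(L)$, the points $\th_i$ can be taken in $\cA(L)$, and one uses that periodic pseudo-orbits in $\cA(L)$ with small jumps are what the later propositions need.

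The main obstacle I expect is the interplay between the symbolic and continuous-time pictures while keeping all estimates quantitative simultaneously: getting a bona fide SFT $\Si_A$ (rather than a sofic or merely continuous coding) from the flow boxes, controlling the number of symbols $M$ and the return-time window so that "combinatorial period" translates cleanly into "period $\le 4TP_T$", and ensuring that the entropy $h$ that enters Lemma~\ref{Lper} is the \emph{right} one — small enough that the block-recoding trick yields sub-exponential $P_T$ for \emph{every} hyperbolic $\cN(L)$, not just zero-entropy ones. Reconciling the Bressaud--Quas combinatorial estimates (which are sharp) with the loss incurred in passing through transverse sections and padding flow-times to length $\approx T$ is the delicate bookkeeping, and I would organize the proof so that Lemma~\ref{Lper} is applied to a recoded shift whose parameters $(M^{N_T}, N_T h)$ are chosen at the very end to make both \eqref{Psubexp} and the exponential jump bound hold with the same $T$.
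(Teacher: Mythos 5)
Your overall architecture --- build an SFT from a covering of $\cA(L)$, apply Lemma~\ref{Lper} to extract a periodic word of small combinatorial period, then shadow it in $\cA(L)$ --- is the same as the paper's, and the reduction of the jump estimate to hyperbolic shadowing (Proposition~\ref{B71}) is right. But there is a genuine quantitative gap in the way you set up the symbolic side, and it is exactly the delicate point you flag at the end.

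The problem is the interplay of your two scalings of $N_T$. To verify \eqref{Psubexp} you compute $\log P_T \lesssim N_T(\log M - h)$ and propose $N_T = o(T)$, e.g.\ $N_T \sim \log T$ or $T^{1/2}$. But to get the jump bound $C\ee^{-\la T}$ you need each segment of the specification to $\de$-shadow a genuine piece of $\cA(L)$ for a time $\gtrsim T$, because the exponential contraction in Proposition~\ref{B71} is $\ee^{-\la(\text{shadowing time})}$; your segments, however, correspond to blocks of $N_T$ symbols from a partition at a fixed (return-time) scale, so they represent time $\approx N_T$, not $\approx T$. ``Padding'' each segment up to length $T$ cannot help: lengthening a symbolic block does not lengthen the time during which the two orbits genuinely stay $\de$-close, so it does not improve the shadowing estimate. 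If instead you take $N_T \approx T$ to fix the jump bound, then $P_T \lesssim (M\ee^{-h})^{N_T}$, which is exponential in $T$ whenever $h < \log M$ --- the generic case --- and \eqref{Psubexp} fails. The crude alphabet bound $M^{N_T}$ is the culprit.

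The paper avoids the dilemma by choosing the alphabet at \emph{time scale} $2T$ rather than recoding a fixed-scale partition. The symbols are a minimal $(2T,\de)$-spanning set $G_T$ of $\cA(L)$, with $A(\theta,\vartheta)=1$ iff $\vr_{2T}(\theta)\in B(\vartheta,2T,\de)$, so no Markov-partition construction and no recoding are needed. The point is the numerical matching: $h$-expansivity of the hyperbolic set (remark~\ref{rue}, valid since $2\de$ is below the expansivity constant) gives $\#G_T\le K_T\ee^{2Th}$ with $K_T$ sub-exponential, \emph{and} a counting of $(2NT,2\de)$-separated points against $N$-cylinders gives $h(\Si)\ge 2Th$. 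Fed into Lemma~\ref{Lper}, the $\ee^{2Th}$ in the alphabet size and the $\ee^{-2Th}$ from the entropy cancel exactly, leaving $P_T\le 1+K_T\ee$, which is sub-exponential \emph{for every value of $h$}; no smallness of the entropy is used. Since each symbol carries time $2T$ of $\de$-shadowing, Proposition~\ref{B71} then yields jumps $\le D\de\ee^{-\la T}$. This matching is precisely what your recoded $M^N$ count throws away: you would need to prune the block alphabet down to the $\approx\ee^{N_T h+o(N_T)}$ words that actually occur, which is what the spanning-set count does for you directly.
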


\begin{proof}\quad

Given a specification $(\Th,\frak T)$ in $\cA(L)$ write
$\xi_i:[t_i,t_{i+1}]\to \cA(L)$, $\xi_i(s)=\vr_s(\th_i)$;
 and $\zeta_i:[0,t_{i+1}-t_i]\to \cA(L)$,
$\zeta_i(s)=\xi_i(s+t_i)$. We identify 
$(\Th,\frak T)$, $\{\xi_i\}$, $\{\zeta_i\}$
as the same specification. We will extend the 
definition of $\xi_i$, $\zeta_i$ to larger intervals,
with the same formula, as needed.

Let $T>0$ and let $\de>0$ be smaller than half of an expansivity constant \ref{dfe} for
$\cA(L)$ and smaller than $\be_0$ in proposition~\ref{B71} applied to $\cA(L)$.
Let $G=G_T$ be a minimal {\it $(2T,\de)$-spanning set} for $\cA(L)$, i.e.
\begin{equation}\label{gen}
\cA(L)\subset \bigcup_{\theta\in G}B(\theta,2T,\de),
\end{equation}
where $B(\theta,2T,\de)$ is the {\it dynamic ball}
$$
B(\theta,2T,\de)=\{\, \vrt\in TM\;|\; d(\vr_s(\theta),\vr_s(\vrt))\le\de\;\;\;\forall s\in[0,2T]\;\},
$$
and no proper subset of $G$ satisfies~\eqref{gen}.
Let $\Si\subset G^\Z$ be the bi-infinite subshift of finite type with symbols in $G$
and matrix $A\in \{0,1\}^{G\times G}$ defined by 
\begin{equation}\label{incA}
A(\theta,\vrt)= 1 
\qquad \Longleftrightarrow \qquad
\vr_{2T}(\theta)\in B(\vrt,2T,\de).
\end{equation}

Given $N\in\na^+$, let $S_N$ be a maximal $(2NT,2\de)$-separated set in $\cA(L)$, i.e.
\begin{equation}\label{sepset}
\tt,\,\vrt\in S_N,\quad \tt\ne\vrt \quad\then \quad
\vrt\notin B(\tt,2NT,2\de),
\end{equation}
and $S_N$ is a maximal subset of $\cA(L)$ with property~\eqref{sepset}.

Given $\tt\in S_N$ let $I(\tt)$ be an itinerary in $\Si$ corresponding to $\tt$, i.e.
$$
\forall n\in\Z\qquad \vr_{2nT}(\tt)\in B(I(\tt)_n,2T,\de), \qquad I(\th)_n\in G_T.
$$
If $\tt, \vrt\in S_N$ 
are different points,
then by \eqref{sepset} there are $0\le n<N$ and $s\in[0,2T]$ such that 
$d(\vr_{2nT+s}(\tt),\vr_{2nT+s}(\vrt))>2\de$. Thus $I(\tt)_n\ne I(\vrt)_n$,
i.e. $I(\tt)$, $I(\vrt)$ belong to different $N$-cylinders in $\Si$.
Therefore
$$
C_N := \#(\text{$N$-cylinders in $\Si$})
\ge \# S_N.
$$
Since $2\de$ is smaller than an h-expansivity  constant for $\cA(L)$,
see remark~\ref{rue},
its topological entropy can be calculated using $(n,2\de)$-separated 
(or $(n,\de)$-spanning) sets,
$h_{\rm top}(\vr,\cA(L))=h(\vr,\cA(L),2\de)$ (c.f.~Bowen~\cite{Bowen9} Thm. 2.4, p. 327),
thus
$$
h(\Si) \ge \limsup_N \frac{ \log \#C_N }N
\ge 2T \limsup_N \frac  {\log \# S_N}{2NT}
= 2T\,h_{top}(\cA(L)) =:2T  h.
$$
There is $K_T$ with sub-exponential growth in $T$ such that
$\#G_T\le K_T\,\ee^{2Th}$.
Then Lemma~\ref{Lper} gives a periodic orbit $\Th$ in $\Si$ with 
\begin{equation}\label{perTh}
P:={\rm period}(\Th)\le 1+ K_T\,\ee^{2Th}\; \ee^{1-2Th}\le 1 + K_T \,\ee.
\end{equation}

By Proposition~\ref{B71}, if $\tt,\,\vrt\in\cA(L)$ and $\vrt\in B(\tt,2T,\de)$ then
there is 
\begin{equation}\label{vtvrt}
|v|=|v(\vrt,\tt)|<D\,\de
\end{equation}
 such that 
\begin{equation}\label{cate1}
\forall \; |s|\le T\quad
d\big(\vr_{s+v+T}(\vrt),\vr_{s+T}(\tt)\big)
\le D\,\de\, \ee^{-\la(T -|s|)}.
\end{equation}

Given a sequence $(\tt_i)_{i\in\Z}\in\Si$, define a specification $(\zeta_i|_{[0,2T+v_i]})_{i\in\Z}$ in 
$\cA(L)$ by 
\linebreak
$v_i:=v(\vr_{2T}(\tt_i),\tt_{i+1})$ from~\eqref{vtvrt}, and $\zeta_i(s):=\vr_{s+T}(\tt_i)$. 
From~\eqref{incA} we have that 
$\vr_{2T}(\tt_i)\in B(\tt_{i+1}, 2T,\de)$.
Then by~\eqref{cate1}, with $\vrt=\vr_{2T}(\tt_i)$, $\tt=\tt_{i+1}$ and $s=0$
\begin{equation}\label{uijump}
d(\zeta_i(2T+v_i),\zeta_{i+1}(0))
= d(\vr_{3T+v_i}(\tt_i),\vr_T(\tt_{i+1}))
\le D\, \de\, \ee^{-\la T}.
\end{equation}

For the sequence $\Th\in\Si$ in \eqref{perTh} we have that $(\xi_i)_{i\in\Z}$ 
 a periodic $D \de\, \ee^{-\la T}$-possible specification
with $P$ jumps,  and period
\begin{equation}\label{uperiod}
{\rm period }(\{\xi_i\})\le (2T+D\de)\, (1+K_T\,\ee)\le 4T (1+ K_T\,\ee).
\end{equation}

\end{proof}

\section{Optimal periodic orbits.}\label{SS3}

A {\it dominated function} for $L$ is a function $u:M\to\re$ such that 
for any $\ga:[0,T]\to M$ absolutely continuous and $0\le s<t\le T$ we have
\begin{equation}\label{domi}
 u(\ga(t))-u(\ga(s))  \le \int_s^t \big[c(L)+L(\ga,\dga)\big]. 
\end{equation}
We say that the curve $\ga$ {\it calibrates} $u$ if the equality holds in \eqref{domi}
for every $0\le s<t\le T$. 
Dominated functions always exist, for example, by the triangle inequality for Ma\~n\'e's potential $\Phi_c$, the functions $u_p(x):=\Phi_c(p,x)$
are dominated for every $p\in M$. 
The definition of the Hamiltonian $H$ associated to $L$ implies 
that any $C^1$ function  
$u:M\to\re$ which satisfies
$$
\forall x\in M, \quad H(x,d_xu)\le c(L)
$$
is dominated.

\begin{Lemma}\label{statcal}
If $u$ is a dominated function and $\ga$ is a static curve
then $\ga$ calibrates $u$.
\end{Lemma}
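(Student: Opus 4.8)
The plan is to use the domination inequality \eqref{domi} in \emph{both} orientations and to let the defining identity of a static curve close the remaining gap. The one observation that drives everything is that a dominated $u$ automatically satisfies $u(y)-u(x)\le\Phi_c(x,y)$: applying \eqref{domi} with $s=0$, $t=T$ to an arbitrary curve $\eta\in\cC_T(x,y)$ and taking the infimum over all such $\eta$ and all $T>0$ gives
\[
u(y)-u(x)\le\Phi_c(x,y)\qquad\text{for all }x,y\in M,
\]
where $c=c(L)$. Since $\Phi_c$ is finite at the critical value and $u$ is real-valued, no $\pm\infty$ bookkeeping is needed.

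Now fix a static curve $\ga:\re\to M$ and reals $s<t$. Applying \eqref{domi} directly to $\ga|_{[s,t]}$ gives
\[
u(\ga(t))-u(\ga(s))\ \le\ \int_s^t\bigl[c+L(\ga,\dga)\bigr]\ =\ A_{L+c}(\ga|_{[s,t]}),
\]
which is one half of calibration. For the reverse inequality I apply the displayed bound with the endpoints swapped, $x=\ga(t)$ and $y=\ga(s)$, to obtain $u(\ga(s))-u(\ga(t))\le\Phi_c(\ga(t),\ga(s))$, i.e. $u(\ga(t))-u(\ga(s))\ge-\Phi_c(\ga(t),\ga(s))$. Finally I invoke the static property of $\ga$ in the form $A_{L+c}(\ga|_{[s,t]})=-\Phi_c(\ga(t),\ga(s))$, which is \eqref{static2} (equivalently, combine semi-staticity, $A_{L+c}(\ga|_{[s,t]})=\Phi_c(\ga(s),\ga(t))$, with the symmetry relation \eqref{aubry}). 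Combining the three facts yields $u(\ga(t))-u(\ga(s))=A_{L+c}(\ga|_{[s,t]})$, i.e. equality in \eqref{domi}, for every $s<t$; since that is exactly what it means for $\ga$ to calibrate $u$, the lemma follows.

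There is no genuine technical obstacle here: the whole point is to notice that domination may be used in the reverse direction and that the static identity is the bridge making the two one-sided estimates coincide. The only point demanding a little care is orientation — $\Phi_c$ is not symmetric, and it is precisely the staticity of $\ga$, through \eqref{aubry}, that repairs this asymmetry.
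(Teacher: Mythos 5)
Your proof is correct and takes essentially the same route as the paper: both use the domination inequality in the two orientations (from $\ga(s)$ to $\ga(t)$ giving the upper bound, from $\ga(t)$ to $\ga(s)$ giving, via $u(\ga(s))-u(\ga(t))\le\Phi_c(\ga(t),\ga(s))$, the lower bound) and close the gap with the static identity $A_{L+c}(\ga|_{[s,t]})=-\Phi_c(\ga(t),\ga(s))$ from \eqref{static2}. The paper phrases the argument as a squeeze forcing all inequalities to be equalities, while you present it as two one-sided bounds meeting, but the content is identical.
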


\begin{proof}
Recall from \eqref{static2}, \eqref{aubry} that $\ga$ is static iff for all $s<t$ we have
\begin{equation}\label{staticd2}
\int_s^t\big[c(L)+L(\ga,\dga)\big] = -\phi_{c(L)}(\ga(t),\ga(s))=\phi_{c(L)}(\ga(s),\ga(t)).
\end{equation}
If $u$ is dominated, $\ga$ is static and $s<t$ we have that
\begin{align*}
u(\ga(t))\le u(\ga(s))+\phi_{c(L)}(\ga(s),\ga(t))
= u(\ga(s))-\phi_{c(L)}(\ga(t),\ga(s)).
\end{align*}
Using again the domination of $u$ and then the previous inequality we get
$$
u(\ga(s))\le u(\ga(t))+\phi_{c(L)}(\ga(t),\ga(s))\le u(\ga(s)).
$$
Therefore, using~\eqref{staticd2}, 
$$
u(\ga(t))=u(\ga(s))-\phi_{c(L)}(\ga(t),\ga(s))=
u(\ga(s))+\int_s^t\big[c(L)+L(\ga,\dga)\big].
$$
\end{proof}

\begin{Lemma}\label{domifathi}\quad

There are $K>0$ and $\de_0>0$ such that if $(z,\dz)\in\cA(L)$
 is a static vector, $u$ is a dominated function and $d(z,y)<\de_0$,
 then in local coordinates
 \begin{equation}\label{upvl}
 \big| u(y)-u(z)-\partial_vL(z,\dz)(y-z)\big|
 \le K\,|y-z|^2,
 \end{equation}
 where $y-z:=(\exp_z)^{-1}(y)$.

\end{Lemma}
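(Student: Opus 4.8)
The plan is to prove this as a quantitative, uniform version of Fathi's differentiability estimate for weak KAM solutions, by comparing the action of an explicit competitor curve with that of a calibrating curve. By Lemma~\ref{ALinv} the curve $\ga(t):=\pi\vr_t(z,\dz)$, $t\in\re$, is static, solves the Euler--Lagrange equation $\tfrac d{dt}\,\partial_vL(\ga,\dga)=\partial_xL(\ga,\dga)$, and satisfies $\ga(0)=z$, $\dga(0)=\dz$; by Lemma~\ref{statcal} it calibrates $u$ on every interval. Since $\cA(L)\subseteq\cE$ and $\cE$ is compact by superlinearity (see \eqref{lbel}), there is a constant $A_*$ depending only on $L$ with $|\dga(t)|\le A_*$ for all $t$. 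I would fix once and for all a radius $r_0$ smaller than the injectivity radius of $M$ and a time $\tau_0>0$ with $A_*\tau_0<r_0/2$, then work throughout in normal coordinates centered at $z$ (so $z=0$, "local coordinates" is unambiguous, and $|\,(\exp_z)^{-1}(y)\,|=d(z,y)$). Taking $\de_0<r_0/2$, all the constants produced below will depend only on $L$, $M$, $r_0$, $\tau_0$ — never on $z$ or $u$ — because the needed second-order bounds on $L$ are taken over the fixed compact set $\{\,|v|\le C_1A_*+\de_0/\tau_0+1\,\}$ of $TM$.

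\emph{Upper bound.} Given $y$ with $|\eta|:=|(\exp_z)^{-1}(y)|<\de_0$, set $\si(t):=\ga(t)+\tfrac{t+\tau_0}{\tau_0}\,\eta$ on $[-\tau_0,0]$; it runs from $\si(-\tau_0)=\ga(-\tau_0)$ to $\si(0)=y$ and stays in the chart. Domination \eqref{domi} applied to $\si$ and calibration of $u$ along $\ga|_{[-\tau_0,0]}$ give, since $\si(-\tau_0)=\ga(-\tau_0)$,
$$
u(y)-u(z)\le A_{L+c}(\si)-A_{L+c}\big(\ga|_{[-\tau_0,0]}\big)
=\int_{-\tau_0}^0\big[L(\ga+\phi,\dga+\dot\phi)-L(\ga,\dga)\big]\,dt ,
$$
with $\phi(t)=\tfrac{t+\tau_0}{\tau_0}\eta$, so $\dot\phi\equiv\eta/\tau_0$ and $|\phi|\le|\eta|$. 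A second-order Taylor expansion of $L$ about $(\ga(t),\dga(t))$ — with a uniform remainder because $L\in C^2$, $M$ is compact, and $\dot\phi$ is uniformly bounded — turns the integrand into $\partial_xL(\ga,\dga)\cdot\phi+\partial_vL(\ga,\dga)\cdot\dot\phi$ plus a term bounded by $K_2(|\phi|^2+|\dot\phi|^2)$. Integrating the middle term by parts and using the Euler--Lagrange equation, $\int_{-\tau_0}^0\partial_vL(\ga,\dga)\cdot\dot\phi\,dt=\partial_vL(z,\dz)\cdot\eta-\int_{-\tau_0}^0\partial_xL(\ga,\dga)\cdot\phi\,dt$, so the two $\partial_xL\cdot\phi$ integrals cancel exactly and only $\partial_vL(z,\dz)\cdot\eta$ survives; the remainder is $\le K_2(\tau_0|\eta|^2+|\eta|^2/\tau_0)=:K_3|\eta|^2$. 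Hence $u(y)-u(z)\le\partial_vL(z,\dz)\cdot\eta+K_3|\eta|^2$.

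\emph{Lower bound.} Argue symmetrically in forward time: put $\si'(t):=\ga(t)+\tfrac{\tau_0-t}{\tau_0}\,\eta$ on $[0,\tau_0]$, running from $\si'(0)=y$ to $\si'(\tau_0)=\ga(\tau_0)$. Domination gives $u(\ga(\tau_0))-u(y)\le A_{L+c}(\si')$, calibration gives $u(\ga(\tau_0))-u(z)=A_{L+c}(\ga|_{[0,\tau_0]})$, and subtracting yields $u(z)-u(y)\le A_{L+c}(\si')-A_{L+c}(\ga|_{[0,\tau_0]})$. The identical Taylor-plus-integration-by-parts computation — now the boundary term of the integration by parts sits at $t=0$ — leaves $-\partial_vL(z,\dz)\cdot\eta$ as the surviving linear term and a remainder $\le K_3'|\eta|^2$, so $u(y)-u(z)\ge\partial_vL(z,\dz)\cdot\eta-K_3'|\eta|^2$. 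Taking $K:=\max\{K_3,K_3'\}$ and $|\eta|=d(z,y)=|y-z|$ finishes the proof.

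The one delicate point — and the only place where something could go wrong — is the choice of the interpolation time: it must be a \emph{fixed} constant $\tau_0$, not a quantity shrinking with $d(z,y)$. The error term involves $|\dot\phi|^2\sim|\eta|^2/\tau_0^2$ integrated over an interval of length $\tau_0$, i.e. $|\eta|^2/\tau_0$, which would blow up relative to $|\eta|^2$ if $\tau_0\to 0$; keeping $\tau_0$ fixed also keeps $\dot\phi$ small, so the Taylor expansion is legitimate and its constants are uniform in $z$. It is precisely the Euler--Lagrange equation (used in the integration by parts) together with the calibration of the static curve $\ga$ that forces the $O(\tau_0)$-size first-order errors to cancel and leaves the clean quadratic bound.
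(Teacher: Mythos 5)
Your proof is correct and follows essentially the same route as the paper: run the static Euler--Lagrange solution $\ga$ through $(z,\dz)$, interpolate linearly to $y$ over a \emph{fixed} time $\tau_0$ (the paper calls it $\e$), compare the action of the interpolant to that of $\ga$ via domination/calibration, and use a second-order Taylor expansion plus integration by parts against the Euler--Lagrange equation so that only the boundary term $\partial_vL(z,\dz)\cdot(y-z)$ survives. The only cosmetic difference is that you work in normal coordinates centered at $z$ while the paper uses a fixed finite atlas; your remark that $\tau_0$ must not shrink with $d(z,y)$ is exactly the right point of care.
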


\begin{proof}
Let $\E\subset TM$ be a compact subset such that 
$E^{-1}_L\{c(L)\}\subset\intt \E$. Cover $M$ by a finite 
set $\cO$ of charts. Fix $0<\e<1$  such that   if $\ga:[-\e,\e]\to M$
has velocity $(\ga,\dga)\in\E$ then $\ga([-\e,\e])$ lies inside the
domain of a chart in $\cO$.
There are $\de_1>0$ smaller than the Lebesgue
number of the covering $\cO$ and $A>0$ such that if $(x,v)\in\E$
and $\max\{|h|,\,|k|\}\le \de_1$ then in the charts
\begin{equation}\label{ldla}
\big| L(x+h,v+k)-L(x,v)-DL(x,v)(h,k)\big|\le A(|h|^2+|k|^2).
\end{equation}
Let $u:M\to\re$ be dominated and  $(z,\dz)\in\cA(L)$. Recall that 
$\cA(L)\subset E_L^{-1}\{c(L)\}\subset \E$. Write $\ga(t):=\pi\vr^L_t(z,\dz)$.
By Lemma~\ref{ALinv} the complete curve $\ga:\re\to M$ is static.
By Lemma~\ref{statcal}, $\ga$ calibrates $u$. Let $\de_0:=\e\,\de_1$.
Let $y\in M$ with $|y-z|<\de_0$ in a local chart. Define $\be:]-\e,0]\to M$ by
$$
\be(t):=\ga(t)+\left(\tfrac{t+\e}\e\right) (y-z).
$$
Then $\be(-\e)=\ga(-\e)$, $\be(0)=y$, $\dbe=\dga+\tfrac 1\e(y-z)$.
In particular $|\dbe-\dga|\le \tfrac 1\e|y-z|\le \de_1$ and we can apply~\eqref{ldla}.
\begin{align*}
\int_{-\e}^0 L(\be,\dbe)\le
\int_{-\e}^0L(\ga,\dga) 
+\int_{-\e}^0 
\Big\{L_x(\ga,\dga)(\be-\ga)
+L_v(\ga,\dga)(\dbe-\dga)\Big\}
+ A \e\big(1+\tfrac 1{\e^2}\big)\,|y-z|^2.
\end{align*}
Using that $\ga$ is a solution of the Euler-Lagrange equation
$\tfrac {d\,}{dt}L_v=L_x$ and integrating by parts, we get that
\begin{align}
\int_{-\e}^0L(\be,\dbe)&\le \int_{-\e}^0 L(\ga,\dga)\, dt+
L_v(\ga,\dga)(\be-\ga)\Big\vert_{-\e}^0 +  \tfrac {2A}{\e}\,|y-z|^2,
\notag\\
&\le \int_{-\e}^0 L(\ga,\dga)\, dt+ L_v(z,\dz)(y-z) + \tfrac{2A}{\e}\,|y-z|^2.
\label{lbdb}
\end{align}
Since $u$ is dominated and calibrated by $\ga|_{[-\e,0]}$ we obtain 
one of the inequalities in~\eqref{upvl}:
\begin{align*}
u(y) &\le u(\ga(-\e))+\int_{-\e}^0 c(L)+ L(\be,\dbe) 
\\
&\le u(\ga(-\e))+\int_{-\e}^0\Big\{L(\ga,\dga)+c(L)\Big\} dt+ L_v(z,\dz)(y-z) + \tfrac{2A}{\e}\,|y-z|^2
\\
&\le u(z) + L_v(z,\dz)(y-z) + \tfrac{2A}{\e}\, |y-z|^2.
\end{align*}

Now define $\a:[0,\e]\to M$ by
$$
\a(t):=\ga(t)+\left(\tfrac {\e-t}\e\right) (y-z).
$$
A similar argument to~\eqref{lbdb} gives
$$
\int_0^\e L(\a,\da)\, dt
\le \int_0^\e L(\ga,\dga) \,dt
-L_v(z,\dz)(y-z) + \tfrac{2A}{\e}\,|y-z|^2.
$$
Since $u$ is dominated we have that
\begin{align*}
u(\ga(\e))
&\le u(y)+\int_0^\e \Big\{L(\a,\da)+c(L)\Big\}
\\
&\le u(y) +\int_0^\e \Big\{L(\ga,\dga)+c(L)\Big\}\, dt
-L_v(z,\dz)(y-z) + \tfrac{2A}{\e}\,|y-z|^2.
\end{align*}
Since $u$ is calibrated by $\ga|_{[0,\e]}$ we have that
$$
u(\ga(\e))-\int_0^\e \Big\{L(\ga,\dga)+c(L)\Big\} = u(z).
$$
Thus we get the remaining inequality
$$
u(z)\le u(y)-L_v(z,\dz)(y-z)+ \tfrac{2A}{\e}\,|y-z|^2.
$$

\end{proof}

The set $\cN(L)$ is hyperbolic for the Euler-Lagrange flow restricted 
to the energy level $E_L^{-1}\{c(L)\}$.
There is a neighborhood $U$ of $\cN(L)$ in  $E_L^{-1}\{c(L)\}$ such that the 
set 
\begin{equation}\label{Lambda0}
\La =\textstyle \bigcap_{-\infty}^{+\infty}\vr_t(\ov U)
\end{equation}
 is hyperbolic, cf.~\cite[prop. 5.1.8] {FH}.
We can assume that $\cA(L)$ has no periodic orbits.
The neighborhood $U$ can be taken so small that any
periodic orbit $\Ga$ in $\La$ has period
\begin{equation}\label{pl10}
\per(\Ga)>10.
\end{equation}

For $B\subset TM$ write
$$
c(B,\cA(L)):=\sup\nolimits_{\th\in B}d(\th,\cA(L)).
$$

\begin{Proposition}\quad\label{palga}

For any $\e>0$ there is a periodic orbit $\Ga\subset\La\subset E_L^{-1}\{c(L)\}$, such that 
\begin{equation}\label{ALGA}
c(\Ga,\cA(L))< \e\,\ga(\Ga)\quad\text{and}\quad
A_{L+c(L)}(\Ga) < \e^2 \, \ga(\Ga)^2,
\end{equation}
where \qquad
$\ga(\Ga):=\min\{ d_{TM}(\Ga(s),\Ga(t)):|s-t|_{\rm mod\per(\Ga)}\ge 1\,\}$.
\end{Proposition}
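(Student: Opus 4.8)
The plan is to start from the optimal periodic specification produced in Proposition~\ref{Pspec} and shadow it by a genuine periodic orbit of the Euler--Lagrange flow, then estimate the action of this orbit against the action of the static orbits the specification lies on. Fix $\e>0$. Apply Proposition~\ref{Pspec} with $T=T_\e$ to be chosen large, obtaining a periodic $T$-specification $(\Th,\frak T)=(\{\th_i\},\{t_i\})$ in $\cA(L)$ with $P=P_T$ jumps, period $\le 4TP_T$, jumps of size at most $C\ee^{-\la T}$, and subexponential $P_T$ in the sense of \eqref{Psubexp}. Since $\cN(L)$ is hyperbolic, $\La$ in \eqref{Lambda0} is a hyperbolic set; by the shadowing lemma (in the refined periodic form proved in appendix~\ref{ashadowing}) there is a periodic orbit $\Ga\subset\La$ that $\beta$-shadows the specification with $\beta\le C'\,\ee^{-\la T}$ (the shadowing constant is linear in the jump size), and whose period $\per(\Ga)$ differs from the period of the specification by at most an additive constant, hence $\per(\Ga)\le 5TP_T$ for $T$ large, and also $\per(\Ga)>10$ by \eqref{pl10}.

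The first conclusion in \eqref{ALGA} is then immediate: every point of $\Ga$ is within $\beta\le C'\ee^{-\la T}$ of a point of $\cA(L)$, so $c(\Ga,\cA(L))\le C'\ee^{-\la T}$, while $\ga(\Ga)$ is bounded below independently of $T$ — indeed $\Ga$ is close to $\cA(L)$, which has no periodic orbits (our standing assumption after \eqref{Lambda0}), and $\per(\Ga)>10$, so for $T$ large $\ga(\Ga)$ cannot shrink to $0$; more precisely one gets a uniform lower bound $\ga(\Ga)\ge\rho_0>0$ from compactness and expansivity of $\La$ together with $\per(\Ga)>10$. Hence $c(\Ga,\cA(L))\le C'\ee^{-\la T}<\e\,\rho_0\le \e\,\ga(\Ga)$ once $T$ is large enough depending on $\e$.

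For the second conclusion we estimate $A_{L+c(L)}(\Ga)$. Break $\Ga$ into $P$ arcs $\Ga_i$ corresponding to the $P$ jumps; each $\Ga_i$ is $\beta$-close to the calibrated static arc $\xi_i=\vr_\bullet(\th_i)$ of length $\approx 2T$. Fix a dominated function $u$ (e.g.\ $u=\Phi_c(p,\cdot)$). By Lemma~\ref{statcal} each $\xi_i$ calibrates $u$, so its $L+c$ action equals $u(\xi_i(\text{end}))-u(\xi_i(\text{start}))$. Comparing $\Ga_i$ with $\xi_i$ and using Lemma~\ref{domifathi} — which gives the quadratic control $|u(y)-u(z)-\partial_vL(z,\dz)(y-z)|\le K|y-z|^2$ at the endpoints, the Fathi-type estimate that converts the $C^0$-closeness of endpoints into a quadratic error in the action — together with the $C^2$ Taylor bound \eqref{ldla} on $L$ along the arcs, one finds that replacing $\xi_i$ by $\Ga_i$ changes the $L+c$ action by at most $O(\beta^2)$ per unit time plus boundary terms that telescope around the closed orbit $\Ga$ (the linear terms $\partial_vL(z,\dz)(\cdot)$ cancel in the sum because $\Ga$ is closed). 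Since $u$ is single-valued on $M$ and $\Ga$ is a closed curve, $\sum_i [u(\Ga_i(\text{end}))-u(\Ga_i(\text{start}))]=0$, so only the quadratic errors survive: $A_{L+c}(\Ga)\le C''\,\per(\Ga)\,\beta^2 \le C''\cdot 5TP_T\cdot (C')^2\ee^{-2\la T}$. By the subexponential bound \eqref{Psubexp}, $TP_T\,\ee^{-2\la T}\to 0$ (indeed it is $\le \ee^{-\la T}$ for $T$ large), so $A_{L+c}(\Ga)\le \ee^{-\la T}$ for $T$ large. Choosing $T$ large enough that $\ee^{-\la T}<\e^2\rho_0^2\le\e^2\ga(\Ga)^2$ finishes the proof; note $A_{L+c}(\Ga)\ge 0$ by \eqref{cloga}, so the bound is genuine.

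The main obstacle is the action estimate $A_{L+c}(\Ga)=O(\per(\Ga)\beta^2)$, i.e.\ squeezing a \emph{quadratic} (not merely linear) dependence on the shadowing error out of the comparison with the calibrated static arcs. This is exactly where Lemma~\ref{domifathi} (Fathi's differentiability of weak KAM solutions along the Aubry set) is essential: naive domination only gives $A_{L+c}(\Ga)\ge 0$ and a linear upper bound $O(\per(\Ga)\beta)$, which — combined with $\per(\Ga)\approx TP_T$ and $\beta\approx\ee^{-\la T}$ — would still go to $0$, but not fast enough relative to $\ga(\Ga)^2$ to beat an arbitrary $\e^2$ unless one is careful; the quadratic estimate, made possible because the linear boundary terms telescope on the closed orbit and the interior error is genuinely second order in the transverse displacement, is what makes $A_{L+c}(\Ga)/\ga(\Ga)^2\to 0$. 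A secondary technical point is guaranteeing the uniform lower bound $\ga(\Ga)\ge\rho_0>0$ uniformly in $T$; this needs the no-periodic-orbits assumption on $\cA(L)$, the bound $\per(\Ga)>10$, and an expansivity/compactness argument on $\La$ to rule out $\Ga$ developing a near-self-return at scale $\to 0$.
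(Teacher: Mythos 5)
Your proposal reproduces the first half of the paper's argument (Proposition~\ref{Pspec} plus shadowing plus the quadratic action estimate via Lemma~\ref{domifathi} and the telescoping of the linear boundary terms), but then stops there, and the place where it stops is exactly where the real difficulty lies. The step ``$\gamma(\Gamma)\ge\rho_0>0$ uniformly in $T$'' is false, and the whole remainder of the paper's proof exists precisely to cope with its failure.

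Concretely: as $T\to\infty$ the shadowing orbit $Y_0$ has period $\approx 4TP_T\to\infty$ and lies within $O(\ee^{-\la T})$ of $\cA(L)$, so it passes close to many points of $\cA(L)$; two arcs of $Y_0$ that shadow nearby static arcs (at times $\ge 1$ apart) will be close to each other, so the self-separation $\ga(Y_0)$ shrinks to $0$ rather than being bounded below. This is exactly what Lemma~\ref{Lgade} in the paper records: for any sequence of periodic orbits $\Gamma_n$ with $c(\Gamma_n,\cA(L))\to 0$ and $\cA(L)$ without periodic orbits, one has $\ga(\Gamma_n)\to 0$. Expansivity of $\La$ does not help: expansivity controls when two \emph{entire} orbits shadow each other, not whether a single long orbit has a close self-return. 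Your sentence ``compactness and expansivity of $\La$ together with $\per(\Gamma)>10$'' provides no lower bound on $\ga(\Gamma)$ and cannot, since the statement it would prove is false.

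With $\ga(Y_0)\to 0$, the inequalities you need, $c(Y_0,\cA(L))<\e\,\ga(Y_0)$ and $A_{L+c}(Y_0)<\e^2\ga(Y_0)^2$, become a genuine competition between two quantities both going to $0$, and there is no a priori reason the left side wins. The paper's proof resolves this with the iterative \emph{cutting} procedure: if $Y_0$ fails \eqref{ALGA}, pick the offending pair $r_1,r_2$ with $d(Y_0(r_1),Y_0(r_2))$ small, shadow the truncated specification $Y_0|_{[r_1,r_2]}$ to produce $Y_1$ with period at most roughly half of $\per(Y_0)$, then compare $Y_1$ against a correspondingly truncated specification in $\cA(L)$ to control $A_L(y_1)$ and $c(Y_1,\cA(L))$ with a fixed multiplicative loss $B_4$ per step. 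Because the period at least halves each time but cannot drop below $10$ (by \eqref{pl10}), the number of cuts is at most $N\le\log_R(5TP_T)$, so the accumulated loss $(B_4)^N\le(5TP_T)^{\log_R B_4}$ is only polynomial in $TP_T$, while $A_1(T)\le K_5P_T\ee^{-2\la T}$ is exponentially small. Hence the final estimates still $\to 0$, and the process must terminate with \eqref{ALGA} holding (since $c(Y_N,\cA(L))\to 0$ rules out termination for the other reason). This controlled-loss bootstrapping is the heart of the proposition and is absent from your proof; without it the argument does not close.

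(Two smaller remarks: your action bound $A_{L+c}(\Gamma)\le C''\per(\Gamma)\beta^2$ carries an extraneous factor of $T$ compared with the paper's $K_5P_T\ee^{-2\la T}$, because the comparison error between $\Gamma$ and the static arcs decays exponentially away from the jump times and so integrates to $O(\beta^2)$ per jump, not $O(T\beta^2)$ per arc --- harmless here but worth noticing. And the paper's ``WLOG $\cA(L)$ has no periodic orbits,'' which you use implicitly, is justified because a periodic orbit in $\cA(L)$ already gives the conclusion trivially.)
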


\begin{proof}\quad

Let $T>0$ be very large which will be chosen at the end of the proof.
Let $\{\xi_i\}_{i=0}^{P-1}$, $\xi_i(t)=\vr_{t-t_i}(\th_i)$, $t\in[t_i,t_{i+1}[$  be 
the periodic specification  from proposition~\ref{Pspec}.
Define $(x_0,\dx_0):\re\to\cA(L)$ by $x_0(t)=\pi(\xi_i(t))$ if $t\in[t_i,t_{i+1}[$ and
$x_0(s+t_P-t_0)=x_0(s)$.

We will use repeatedly the constants from appendix~\ref{ashadowing} 
applied to the hyperbolic set $\La$ from \eqref{Lambda0}.
We will show that if $T$ is chosen sufficiently large then the objects at each step
are specifications and periodic orbits inside\footnote{Because they are (segments of) 
periodic orbits $\Ga$ with $c(\Ga,\cA(L))$ small, and hence $\Ga\subset U$.
Observe that $\La$ is not necessarily locally maximal, then a priori shadowing objects
could be outside $\La$.} $\La$.

By the shadowing theorem~\ref{SHL}, there is a periodic Euler-Lagrange solution
$(y_0,\dy_0)$ with energy $c(L)$ and a  continuous reparametrization 
$\si(t)$, with 
$| \si(t)-t|\le EC\, \ee^{-\la T}$ such that
$$
\forall t \qquad
d\big([x_0(t),\dx_0(t)],[y_0(\si(t)),\dy_0(\si(t))]\big) <E\cdot C\, \ee^{-\la T}.
$$

Then $Y_0(s):=(y_0(s),\dy_0(s))$ is a periodic orbit with a period near $\si(t_P)-\si(t_0)$.
We want a sequence of times $s_k$ nearby $\si(t_k)$ such that $s_P-s_0$ is
a period for $Y_0(s)$.  
Using canonical coordinates from~\ref{caco}
 define $w^k\in \re$ small by
\begin{align*}
\langle Y_0(\si(t_k)),\th_k\rangle 
&=W^{s}_\ga(Y_0(\si(t_k)))\cap W^{uu}_\ga(\th_k)
\\
&=W^{ss}_\ga\big(\vr_{w_k}(Y_0(\si(t_k)))\big)\cap W_\ga^{uu}(\th_k)
\ne\emptyset.
\end{align*}
Now let $s_k:=w_k+\si(t_k)$.
Observe that the time shift $w_k$ is determined by the sequence $\th_k$ which 
is periodic. Then the sequence $s_k$ is periodic with the period $s_P-s_0$ of $Y_0$
and by proposition~\ref{Pspec},
\begin{equation}\label{peryo}
\per(y_0):= {\rm period}(y_0)\le 5 T P_T.
\end{equation}

By proposition~\ref{B71} there is $D>0$  such that  
for $T$  large enough  
 there are $v^0_k$  such that
\begin{gather}
|v^0_k|\le D E\cdot C\ee^{-\la T},
\label{v0k}\\
\label{dx0y0}
\forall s\in[s_k,s_{k+1}]
\quad
d\big(Y^0(s),\vr_{s-s_i+v^0_k}(\th_i)\big)
\le D E\cdot C\,\ee^{-\la T} \, \ee^{-\la \min\{s-s_k,\,s_{k+1}-s\}}.
\end{gather}
Let $z^0_k(s):=\pi\vr_{s-s_i+v^0_k}(\th_i)$, $s\in[s_k,s_{k+1}]$.
Since by \ref{ALinv} $\cA(L)$ is invariant, we also have that $(z_k^0,\dz_k^0)\in\cA(L)$.

By adding a constant to $L$ we can assume that 
\begin{equation}\label{cloo}
c(L)=0.
\end{equation}
On local charts we have that
\begin{align*}
L(y_0,\dy_0) \le L(z^0_k,\dz^0_k) 
&+\partial_xL(z_k^0,\dz_k^0) (y_0-z^0_k) 
+\partial_vL(z^0_k,\dz^0_k) (\dy_0-\dz^0_k)
\\
&+ K_1 \, d\big([y_0(s),\dy_0(s)],[z^0_k(s),\dz^0_k(s)]\big)^2.
\end{align*}
Using that $z^0_k$ is an Euler-Lagrange solution we obtain
\begin{align}
\int_{s_k}^{s_{k+1}}L(y_0,\dy_0) 
&\le 
\left[\int_{s_k}^{s_{k+1}}L(z^0_k,\dz^0_k) \right]
+ \partial_vL(z_k^0,\dz_k^0)(y_0-z_k^0)\Big\vert_{s_k}^{s_{k+1}}
+
\notag\\
&+ K_1\int_{s_k}^{s_{k+1}} d\big([y_0(s),\dy_0(s)],[z^0_k(s),\dz^0_k(s)]\big)^2.
\label{iqK1}
\end{align}
Write $Z^0_k:=(z^0_k,\dz^0_k)$. Then
\begin{align}\label{ALz}
A_L(y_0) \le &\sum_{k=0}^{P_T-1} A_L(z^0_k)\;+
\notag\\
+&\sum_{k=0}^{P_T-1}\Big\{ \partial_v L(z^0_k,\dz^0_k)(y_0-z^0_k)\Big\vert_{s_{k+1}}
-\partial_v L(z^0_{k+1},\dz^0_{k+1})(y_0-z^0_{k+1})\Big\vert_{s_{k+1}}
\Big\}
\\
+&K_1 \sum_{k=0}^{P_T-1} \int_{s_k}^{s_{k+1}} d(Y_0,Z^0_k)^2 \, ds.
\notag
\end{align}
From \eqref{dx0y0}, for $K_2:=K_1 DEC$, the last term satisfies
\begin{equation}\label{siy0zk}
K_1 \sum_{k=0}^{P_T-1} \int_{s_k}^{s_{k+1}} d(Y_0,Z^0_k)^2 \, ds
\le
P_T\, K_2\,\ee^{-2\la T}.
\end{equation}

Let $u$ be a dominated function. By Lemma~\ref{domifathi},
if $(z,\dz)\in\cA(L)$ is a static vector then 
\begin{equation}\label{deuyz}
\big| u(y)-u(z) - \partial_vL(z,\dz)(y-z) \big|\le K_3\, |y-z|^2.
\end{equation}
By Lemma~\ref{statcal}, $u$ is necessarily calibrated by static curves.
Then using \eqref{deuyz},
\begin{align}
\sum_{k=0}^{P_T-1}A_L(z^0_k) &=\sum_k u(z^0_k(s_{k+1}))-u(z^0_k(s_k))
\notag \\
&=
\sum_k u(z^0_k(s_{k+1}))-u(z^0_{k+1}(s_{k+1}))
\notag \\
&= 
\sum_k
\big\{
u(z^0_k) -u(y_0) +u(y_0) -u(z^0_{k+1})\big\}\Big\vert_{s_{k+1}}
\notag \\
&\le \sum_k
\Big\{
\partial_v L(z^0_k,\dz^0_k)(z^0_k-y_0)
+
\partial_v L(z^0_{k+1},\dz^0_{k+1})(y_0-z^0_{k+1})\Big\}\Big\vert_{s_{k+1}}
+
\notag\\
&\hskip 2cm +K_3\,\Big\{|z^0_k-y_0|^2+|y_0-z^0_{k+1}|^2\Big\}
\Big\vert_{s_{k+1}}.
\label{ALxk0}
\end{align}
From  \eqref{dx0y0}  the last term satisfies
\begin{equation}\label{RALxk}
\sum_{k=0}^{P_T-1} K_3\,\Big\{|z^0_k-y_0|^2+|y_0-z^0_{k+1}|^2\Big\}
\Big\vert_{s_{k+1}}
\le
P_T\,K_4\,\ee^{-2\la T}.
\end{equation}
 Replacing  estimate  \eqref{ALxk0} for $\sum_k A_L(z_k^0)$ 
in inequality \eqref{ALz}
we obtain
\begin{gather}
A_L(y_0) \le  \sum_{k=0}^{P_T-1} \Big\{K_1\int_{s_k}^{s_{k+1}}d(Y_0,Z^0_k)^2\,ds
+K_3\, \big(|z^0_k-y_0|^2_{s_k}+|z^0_k-y_0|^2_{s_k+1}\big)\Big\}.
\label{ALY0int}
\intertext{Using  \eqref{siy0zk} and \eqref{RALxk} we have that }
A_L(y_0)\le\text{sum in \eqref{ALY0int}} \le K_5\, P_T\,\ee^{-2\la T}= : A_1(T).
\label{A1T}
\end{gather}
From \eqref{dx0y0} we get
\begin{equation}\label{cY0A}
c(Y_0,\cA(L)) < DE\cdot C\, \ee^{-\la T}.
\end{equation}
We can choose  in \eqref{A1T} $K_5>(DEC)^2$,  so that 
\begin{equation}\label{A1Tmax}
\max\{ A_L(y_0)^{\frac 12},\, c(Y_0,\cA(L))\} < A_1(T)^{\frac 12}.
\end{equation}
Also from \eqref{v0k},
\begin{equation}\label{v0k2}
|v^0_k|\le  A_1(T)^\frac12.
\end{equation}
If $\Ga=Y_0$ satisfies \eqref{ALGA} then the proof finishes.

If $\Ga=Y_0$ does not satisfy \eqref{ALGA}
then there are $r_1, r_2$,  $|r_1-r_2|_{{\rm mod }\,(s_P-s_0)}\ge 1$
such that 
\begin{equation}
\begin{aligned}
\e\, d(Y_0(r_1),Y_0(r_2)) &\le c(Y_0,\cA(L)), \qquad \text{or}
\\
\e^2\, d(Y_0(r_1),Y_0(r_2))^2 &\le A_{L}(y_0),
\label{dy012}
\end{aligned}
\end{equation}
using \eqref{cloo}.
Shifting the initial point of $Y_0$, we can assume that $r_1<r_2$ and 
$r_2-r_1 \le \tfrac 12 \per(y_0)$.
If for some $i,j$ we have that $|r_j-s_i|\le 1$ we replace $r_j$ by $s_i$
 and shift the other $r_k$ accordingly.
By Gronwall's inequality the distance $d(Y_0(r_1), Y_0(r_2))$ increases
at most by a multiple, say $B_0>1$. This insures that the times 
$\{ r_1,r_2,s_1,\dots s_{P-1}\}$ are all separated $({\rm mod}\, (s_P-s_0))$ at least by 1.
With this modification we get 
\begin{equation}\label{dr1r2}
r_2-r_1 \le \tfrac 12 \per(y_0)+2.
\end{equation}

In the following iteration process we will compare distances of a periodic orbit $Y_i(s)$
with a time shifted periodic orbit $Y_{i-1}(s+v^i)$. We will ensure in \eqref{tshift} that 
all the time shifts used are smaller than 1. We will take all the time shifts into account using
Gronwall's inequality by adding a multiple $B_0>1$ to the distance estimates.
Write
\begin{equation}\label{D0}
D_0:= B_0 \cdot D E>1.
\end{equation}

Let $Y_1=(y_1,\dy_1)$ be the closed orbit which shadows the periodic specification
$Y_0|_{[r_1,r_2]}$.
By~\eqref{cY0A}, for $T$ large, $Y_0\subset \La$ and hence by \eqref{pl10}, $\per(y_0)> 10$.
Then for $R=\frac 54$, using \eqref{peryo}, we have that 
\begin{equation}\label{pery1}
\per(y_1)\le
\tfrac 12\; \per(y_0) +3 \le R^{-1}\per(y_0)\le R^{-1} (5T P_T).
\end{equation}
By theorem~\ref{SHL}, proposition~\ref{B71} and \eqref{D0}, there is $v^1$ 
such that 
\begin{gather}
|v^1| \le D_0 \cdot d(Y_0(r_1),Y_0(r_2)),
\label{v1}\\
\forall s\in[r_1,r_2]\qquad
d(Y_1(s),Y_0(s+v^1))\le D_0 \, \ee^{-\la\min\{s-r_1,r_2-s\}}\,d(Y_0(r_1),Y_0(r_2)),
\label{dY0Y1}
\end{gather}
From \eqref{dy012} and \eqref{A1Tmax},
\begin{equation}\label{dY0Y1r}
d(Y_0(r_1),Y_0(r_2))\le \e^{-1}\,A_1(T)^\frac12.
\end{equation}

Using~\eqref{dY0Y1}, \eqref{dY0Y1r}, \eqref{dy012}, \eqref{A1Tmax} and $D_0\,\e^{-1}>1$,
\begin{align}
c(Y_1,\cA(L))&\le D_0\cdot d(Y_0(r_1),Y_0(r_2))
+ c(Y_0,\cA(L))
\notag
\\
&\le D_0\cdot\e^{-1} c(Y_0,\cA(L))
+ A_1(T)^\frac12
\notag\\
&\le 2 D_0\,\e^{-1} \,A_1(T)^{\frac 12}.
\notag\\
c(Y_1,\cA(L))&\le B_4\, A_1(T)^\frac12 \qquad\text{using \eqref{B3}.}
\label{cY1A}
\\
|v^1|+|v^0_k| &\le 
B_4 \,A_1(T)^{\frac 12}
\qquad \text{using \eqref{v1}, \eqref{v0k2}.}
\label{v1v0k}
\end{align}

In order to estimate the action of $Y_1$ we need to compare it
with a specification in $\cA(L)$.
Write $z^1_k(s)=z^0_k(s+v^1)$ and $Z^1_k=(z^1_k,\dz^1_k)$.
We cut the specification $\{Z^1_k|_{[s_k,s_{k+1}]}\}$
at $r_1$ and $r_2$ and remain with the periodic specification of
period $r_2-r_1\le \tfrac 12 \per(y_0)+2$, and jumps in $r_1<s_i<s_{i+1}<\ldots<s_j<r_2$
where $s_{i-1}\le r_1<s_i$ and $s_j<r_2\le s_{j+1}$.

 For $s\in[s_k,s_{k+1}]$ we have that 
\begin{align}
d(Y_1(s),Z_k^1(s)))&\le
d(Y_1(s),Y_0(s+v^1)) + d(Y_0(s+v^1),Z_k^1(s)).
\notag\\
d(Y_1(\cdot),Z_k^1(\cdot))^2 &\le
2\,d(Y_1(\cdot),Y_0(\cdot))^2 + 2\,d(Y_0(\cdot),Z_k^1(\cdot))^2,
\label{omitted1}\\
&\le 2\, (D_0)^2\, \ee^{-2\la \min\{s-r_1,r_2-s\}}\e^{- 2} \, A_1(T) \, +
\qquad\text{using~\eqref{dY0Y1}, \eqref{dy012}, \eqref{A1Tmax} }
\label{dy1y0}
\\
&\quad + 2\,(D_0 C)^2\,\ee^{-2\la T} \, \ee^{-2\la \min\{s-s_k,\,s_{k+1}-s\}},
\qquad\text{using \eqref{dx0y0}, \eqref{D0}.}
\notag
\end{align}
where the omitted arguments in \eqref{omitted1} are the same as in the previous inequality.

Repeating the estimates in~\eqref{iqK1}, \eqref{ALz}, ~\eqref{ALxk0} for the 
intervals between $r_1,s_i,\ldots,s_j,r_2$, we get
\begin{equation}
\begin{aligned}
A_L(y_1)&\le 
 \sum_{r_1\le s_k<r_2}\Big\{ K_1 \int_{s_{k}}^{s_{k+1}} d(Y_1,Z_k^1)^2\,ds
 +K_3
 \big( |y_1-z_{k}^1|^2_{s_k}
 +|y_1-z_{k}|^2_{s_{k+1}}\big)\Big\}.
\end{aligned}
 \label{aly1k3}
 \end{equation}

Using \eqref{omitted1} we can separate the sums in \eqref{aly1k3} into two sums.
The sums with terms $2\,d(Y_0,Z_k^1)^2$ or $2\,|y_0-z_k^1|^2$ are
about half of the terms in \eqref{ALY0int}, with a shift of $v^1$,
plus a term for the new jump at $(r_1,r_2)$.
The total number of jumps is $\le\frac 12 P_T+1\le P_T$, then 
the same estimate \eqref{A1T} gives:
\begin{align}
\sum_{r_1\le s_k<r_2}\int_{s_k}^{s_{k+1}}2 K_1\,d(Y_0,Z_k)^2 \,ds
+ 2K_3 \big(|y_0-z_k^1|^2_{s_k}+|y_0-z^1_k|^2_{s_{k+1}}\big)
\le 2\,A_1(T).
\label{AY1a}
\end{align}

The other sum uses terms with  $2\,d(Y_1(s),Y_0(s+v^1))^2$ which are
bounded in~\eqref{dy1y0}. Abbreviating the time shift $v^1$, this sum writes
\begin{align}
\sum_{r_1\le s_k<r_2}&\int_{s_k}^{s_{k+1}}2 K_1\,d(Y_1,Y_0)^2 \,ds
+ 2K_3 \big(|y_1-y_0|^2_{s_k}+|y_1-y_0|^2_{s_{k+1}}\big)
\notag\\
&\le \int_{r_1}^{r_2} 2 K_1\,d(Y_1,Y_0)^2 \,ds
+
\sum_{r_1\le s_k<r_2}
2K_3 \big(|y_1-y_0|^2_{s_k}+|y_1-y_0|^2_{s_{k+1}}\big)
\notag\\
&\le
2 K_1\,(D_0)^2 B_1\,\e^{-2} \, A_1(T) + 2 K_3\, (D_0)^2\, 2 B_2\, \e^{-2} A_1(T)
\notag\\
&\le 
B_3\, A_1(T),
\label{AY1b}
\end{align}
using~\eqref{dY0Y1}, \eqref{dY0Y1r}, where 
\begin{gather}
\textstyle
B_1:=\int_{-\infty}^{+\infty} \ee^{-2\la |s|}ds= \frac 1\la,
\qquad
B_2:= 1+\sum_{n\in\na} \ee^{-2\la n },
\notag\\
B_3 =B_3(\e):= 2\, (K_1+K_3)\,(D_0)^2 (B_1+2 B_2)\, \e^{-2}.
\label{B2}
\end{gather}
Adding \eqref{AY1a} and \eqref{AY1b} we get
\begin{equation}
A_L(y_1)\le \text{sum in \eqref{aly1k3}}\le B_4 \, A_1(T),
\label{AY1}
\end{equation}
where 
\begin{equation}\label{B3}
B_4:=\max\{B_3+4, 2D_0\,\e^{-1}\}>4.
\end{equation}

If $Y_1=\Ga$ satisfies \eqref{ALGA} the proof finishes.
If not there are $r_3<r_4$, $r_4-r_3\le \tfrac 1{2}\per(y_1)+2$,
such that 
\begin{align}
d(Y_1(r_3),Y_1(r_4)) &\le\e^{-1} \max\{ A_{L}(y_1)^\frac12, c(Y_1,\cA(L))\}
\notag\\
&\le \e^{-1} B_4\, A_1(T)^\frac12 \qquad\text{using \eqref{AY1}, \eqref{cY1A}.}
\label{cr3r4}
\end{align}
We shadow the specification $Y_1|_{[r_3,r_4]}$ by a periodic orbit
$Y_2=(y_2,\dy_2)$ with
\begin{gather}
|v^2| \le D_0\cdot d(Y_1(r_3),Y_1(r_4)),
\label{v2}\\
\forall s\in[r_3,r_4]\qquad
d(Y_2(s),Y_1(s+v^2))\le D_0 \, \ee^{-\la\min\{s-r_3,r_4-s\}}\,d(Y_1(r_3),Y_1(r_4)),
\notag\\
\per(y_2)\le \tfrac 12\per(y_1)+3\le R^{-2} \per(y_0).
\notag
\end{gather}
Then
\begin{align*}
c(Y_2,\cA(L)) 
&\le D_0\cdot d(Y_1(r_3),Y_1(r_4)) + c(Y_1,\cA(L))
\\
&\le 2D_0 \, \e^{-1} B_4\, A_1(T)^\frac12
\qquad\text{using \eqref{cr3r4}, \eqref{cY1A}}
\\
&\le (B_4)^2\, A_1(T)^{\frac12}
\qquad\text{using \eqref{B3}.}
\\
|v^2|+|v^1|+|v^0_k|
&\le (B_4)^2\, A_1(T)^{\frac12} \qquad\text{similarly, using \eqref{v2}, \eqref{v1v0k}.}
\end{align*}

We need to compare $Y_2$ with a specification in $\cA(L)$.
Write $z_k^2(s)=z_k^1(s+v^2)$ and $Z_k^2=(z_k^2,\dz_k^2)$.
Then

\begin{align}
A_L(y_2)&\le 
\sum_{r_3\le k<r_4}   \int_{s_{k}}^{s_{k+1}} K_1 \,d(Y_2,Z_k^2)^2 \,ds
 +K_3\,\big( |y_2-z_{k}^2|_{s_{k}}^2
 +|y_2-z_{k}^2|_{s_{k+1}}^2\big).
 \label{aly2k3}
\end{align}
\begin{equation}\label{syszk}
d(Y_2,Z^2_k)^2\le 2\,d(Y_2,Y_1)^2 + 2\, d(Y_1,Z^2_k)^2.
\end{equation}
\begin{align}
\int_{r_3}^{r_4} 2K_1\, d(Y_2,&Y_1)^2 \,ds
+ \sum_{r_3\le s_k< r_4} 2K_3\big(|y_2-y_1|^2_{s_k}+|y_2-y_1|^2_{s_{k+1}}\big)
\label{sum30}\\
&\le
\big\{2K_1 \,B_1\, (D_0)^2+ 2K_3\,(D_0)^2 \,2 B_2\big\} \;d(Y_1(r_3),Y_1(r_4))^2
\notag\\
&\le
2 (K_1+K_3) (D_0)^2 (B_1+2B_2) \,\e^{-2}  (B_4)^2 A_1(T)
\qquad\text{using~\eqref{cr3r4}},
\notag\\
&\le B_3\, (B_4)^2\,A_1(T)
\qquad\text{using~\eqref{B2}.}
\label{B2B3}
\end{align}
From \eqref{syszk} and \eqref{aly2k3} we have that 
\begin{align*}
AL(y_2)
&\le
\text{sum in~\eqref{sum30}}+2\,\text{sum in~\eqref{aly1k3}}
\\ 
&\le
(B_4)^3\, A_1(T)
\qquad\text{using \eqref{B2B3}, \eqref{AY1}, \eqref{B3},}
\\
&\le (B_4)^4 A_1(T).
\\
\per(y_2) &\le R^{-2}  \per(y_0) \le R^{-2}  (5T P_T).
\end{align*}

At the $n$-th iteration we have
\begin{align}
A_L(y_n)&\le (B_4)^{2n} \, A_1(T),
\notag\\
\per(y_n)
&\le R^{-n} \per(y_0)\le R^{-n} (5TP_T).
\label{peryn}
\end{align}
\begin{gather*}
c(Y_n,\cA(L)) 
\le 
(B_4)^n A_1(T)^\frac12.
\\
|v^0_k|+\tsum_{i=1}^n|v^i|
\le
(B_4)^n A_1(T)^\frac12.
\end{gather*}

Let $\a_2>0$ be such that 
$\{\,\th\in T^*M: d(\th,\cA(L))<\a_2\,\} \subset U$, where $U$ is from \eqref{Lambda0}.
This process can be repeated as long as $c(Y_n,\cA(L))<\a_2$ holds and \eqref{ALGA} is not satisfied.
The resulting periodic  $Y_n$ is in $\La$ and hence by \eqref{pl10} it has period larger than 10.
Thus the process stops at an iterate $N$ where the period in \eqref{peryn} is larger than 1.
 This is 
\begin{align}
N\le &\log_R\per(y_0) \le \log_R (5TP_T),
\notag\\
A_L(y_N)\le (B_4)^{2N} A_1(T)
&\le 
(5 T P_T)^{2\log_R B_4} \cdot K_5 \,P_T\,\ee^{-2\la T}
\qquad\text{using \eqref{A1T},}
\notag\\
c(Y_N,\cA(L)) &\le 
(5TP_T)^{\log_R B_4} \sqrt{K_5 P_T}\; \ee^{-\la T},
\notag\\
|v^0_k|+\tsum_{i=1}^n|v^i|
&\le 
(5TP_T)^{\log_R B_4} \sqrt{K_5 P_T}\; \ee^{-\la T}.
\label{tshift}
\end{align}
Since by~\eqref{Psubexp}, $P_T$ has sub-exponential growth in $T$,
we have that 
  $c(Y_N,\cA(L))$ and $A_L(y_N)$ can be made 
arbitrarily small by choosing $T$ sufficiently large.
Then the process stops not because $c(Y_N,\cA(L))$ is large,
but because  \eqref{ALGA} holds.

\end{proof}

\section{The perturbed minimizers.}
\label{SS4}

The following Crossing Lemma is extracted for Mather~\cite{Mat5} with 
the observation that the estimates can be taken uniformly on a $C^2$ neighbourhood of $L$.

\begin{Lemma}[Mather~{\cite[p. 186]{Mat5}}]\label{CL}\quad

If $K>0$, then there exist $\e$, $\de$, $\eta$, $\zeta>0$ and 
\begin{equation}\label{CL>1}
C> 1,
\end{equation}
such that 
if $\lV \phi\rV_{C^2}<\zeta$, and 
$\a, \ga:[t_0-\e,t_0+\e]\to M$
are solutions of the Euler-Lagrange equation for $L+\phi$
with $\lV d\a(t_0)\rV$, $\lV d\ga(t_0)\rV\le  K$,
$d\big(\a(t_0),\ga(t_0)\big)\le\de$, and
$$d\big(d\a(t_0),d\ga(t_0)\big)\ge C\; d\big(\a(t_0),\ga(t_0)\big),$$
then there exist $C^1$ curves $a,c:[t_0-\e,t_0+\e]\to M$
such that $a(t_0-\e)=\a(t_0-\e)$, $a(t_0+\e)=\ga(t_0+\e)$,
$c(t_0-\e)=\ga(t_0-\e)$, $c(t_0+\e)=\a(t_0+\e)$, and
\begin{equation}\label{ecl}
A_{L+\phi}(\a)+A_{L+\phi}(\ga)
-A_{L+\phi}(a)-A_{L+\phi}(c)
> \eta\; d\big(d\a(t_0),d\ga(t_0)\big)^2.
\end{equation}

\end{Lemma}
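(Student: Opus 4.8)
The approach is the classical generating--function argument. Over a short time interval, segments of Euler--Lagrange solutions of $L+\phi$ are genuine Tonelli minimizers, so the ``crossed'' curves $a,c$ may be taken to be the minimizers joining the exchanged endpoints; the action comparison then reduces to a mixed second difference of the short--time minimal action, whose sign and size are governed by $\partial^2_{vv}L$. First I localize: by conservation of the energy of $L+\phi$, by $\lV d\a(t_0)\rV,\lV d\ga(t_0)\rV\le K$ and by $\lV\phi\rV_{C^2}<\zeta$, the speeds $\lv\da\rv,\lv\dga\rv$ stay bounded by a constant $A_1=A_1(K)$ on the whole interval (as in Lemma~\ref{priori}), so $\a,\ga$ stay in a fixed compact set $\cK\subset TM$ and, shrinking $\de$, I work in a single chart about $\a(t_0)$. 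Over a neighbourhood of $\cK$ all $C^2$ data of $L+\phi$, the inverse $(\partial^2_{vv}L)^{-1}$, and the convexity bound $\partial^2_{vv}L\ge\tfrac a2\,I$ are controlled uniformly in $\phi$ with $\lV\phi\rV_{C^2}<\zeta$ --- this is the source of the asserted uniformity on a $C^2$--neighbourhood of $L$.

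Next I fix $\e>0$ small (uniformly, below the conjugate time on $\cK$) so that for $p,q$ in a ball about $\a(t_0)$ of radius large compared with $\de$ there is a unique $(L+\phi)$--Tonelli minimizer $\Gamma_{pq}$ on $[t_0-\e,t_0+\e]$ joining $p$ to $q$; it depends smoothly on $(p,q)$ with graph in $\cK$, and I let $h(p,q)$ be its action. Since $\partial_qh(p,q)=\partial_vL\big(q,\dot\Gamma_{pq}(t_0+\e)\big)$ and $\dot\Gamma_{pq}(t_0\pm\e)=\tfrac1{2\e}(q-p)+O(\e)$ with all derivatives controlled on $\cK$, $h$ is $C^1$ with $\partial_qh$ of class $C^1$, and one obtains the uniform short--time asymptotics
\[
\partial_p\partial_qh(p,q)=-\tfrac1{2\e}\,\partial^2_{vv}L\big(x_{pq},v_{pq}\big)+O(1),
\qquad \tfrac a2\,I\le\partial^2_{vv}L\big(x_{pq},v_{pq}\big)\le C_1\,I ,
\]
with $\partial^2_{vv}L$ \emph{symmetric} and the $O(1)$ bounded uniformly in $\phi$. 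For $\e$ below the conjugate time, $\a|_{[t_0-\e,t_0+\e]}$ and $\ga|_{[t_0-\e,t_0+\e]}$ are themselves the minimizers joining their own endpoints; writing $P_1=\a(t_0-\e)$, $Q_1=\a(t_0+\e)$, $P_2=\ga(t_0-\e)$, $Q_2=\ga(t_0+\e)$ we get $A_{L+\phi}(\a)=h(P_1,Q_1)$, $A_{L+\phi}(\ga)=h(P_2,Q_2)$. I define $a$ and $c$ to be the $(L+\phi)$--minimizers on $[t_0-\e,t_0+\e]$ from $P_1$ to $Q_2$ and from $P_2$ to $Q_1$ respectively; these are $C^1$ (in fact smooth), satisfy the required endpoint relations, and $A_{L+\phi}(a)=h(P_1,Q_2)$, $A_{L+\phi}(c)=h(P_2,Q_1)$ provided $\de$ is small compared with $\e$ so that the $P_i,Q_j$ lie in the domain of $h$.

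By the fundamental theorem of calculus in $p$ and then in $q$,
\begin{align*}
A_{L+\phi}(\a)+A_{L+\phi}(\ga)-A_{L+\phi}(a)-A_{L+\phi}(c)
&= h(P_1,Q_1)+h(P_2,Q_2)-h(P_1,Q_2)-h(P_2,Q_1)
\\
&= \int_0^1\!\!\int_0^1 \big\langle\, P_1-P_2,\ \partial_p\partial_qh(\cdot)\,(Q_1-Q_2)\,\big\rangle\,ds\,du ,
\end{align*}
the argument of $\partial_p\partial_qh$ being $\big(P_2+s(P_1-P_2),\,Q_2+u(Q_1-Q_2)\big)$. Set $\xi:=\a-\ga$, $m:=\tfrac12\big(\xi(t_0-\e)+\xi(t_0+\e)\big)$ and $w:=\tfrac1{2\e}\big(\xi(t_0+\e)-\xi(t_0-\e)\big)$, so $P_1-P_2=m-\e w$ and $Q_1-Q_2=m+\e w$ exactly. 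Inserting the asymptotics and using the \emph{symmetry} of $\partial^2_{vv}L$, the cross terms cancel, $\big\langle m-\e w,\ \partial^2_{vv}L\,(m+\e w)\big\rangle=\langle m,\partial^2_{vv}L\,m\rangle-\e^2\langle w,\partial^2_{vv}L\,w\rangle$, so the integrand is $\ge\tfrac{\e a}{4}|w|^2-\tfrac{C_1}{2\e}|m|^2-C\,(|m|+\e|w|)^2$. A Taylor expansion at $t_0$, with $\ddot\xi=O(|\xi|+|\dot\xi|)$ from the Euler--Lagrange equation and Gronwall on the interval (hence $|\xi|,|\dot\xi|\le C_K$ there), gives $|m|\le|\xi(t_0)|+O(\e^2)$ and $|w|\ge|\da(t_0)-\dga(t_0)|-O(\e)$. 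Since $d\big(d\a(t_0),d\ga(t_0)\big)\asymp|\xi(t_0)|+|\dot\xi(t_0)|$ and $d\big(\a(t_0),\ga(t_0)\big)\asymp|\xi(t_0)|$, the hypothesis $d(d\a(t_0),d\ga(t_0))\ge C\,d(\a(t_0),\ga(t_0))$ forces $|\xi(t_0)|\le\tfrac{\mathrm{const}}{C}|\dot\xi(t_0)|$, so for $C$ large and $\e$ small $|m|\le\tfrac12|w|$, $|w|\ge\tfrac12|\dot\xi(t_0)|\ge\mathrm{const}\cdot d(d\a(t_0),d\ga(t_0))$, and both error terms are absorbed into $\tfrac{\e a}{8}|w|^2$. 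Fixing first $\e$ small, then $C>1$ large, then $\de,\zeta$ small, one concludes
\[
A_{L+\phi}(\a)+A_{L+\phi}(\ga)-A_{L+\phi}(a)-A_{L+\phi}(c)\ \ge\ \tfrac{\e a}{32}\,|\dot\xi(t_0)|^2\ \ge\ \eta\; d\big(d\a(t_0),d\ga(t_0)\big)^2
\]
for a suitable $\eta>0$, strictly, since $d(d\a(t_0),d\ga(t_0))>0$ unless $\a\equiv\ga$ (in which case there is nothing to prove).

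The one genuinely technical step is the short--time asymptotics of $h$ with a \emph{symmetric} leading term $-\tfrac1{2\e}\partial^2_{vv}L$ and a uniformly bounded remainder; everything else is bookkeeping with constants that are uniform for $\lV\phi\rV_{C^2}<\zeta$. That symmetry is precisely what makes a naive interpolation between $\a$ and $\ga$ unnecessary and cancels the only contribution --- the one carried by the kinetic term $\partial^2_{vv}L\,\dot\xi\cdot\dot\xi$ --- that would otherwise have the wrong sign and swamp the gain.
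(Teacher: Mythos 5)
The paper does not actually prove this lemma: it states it as ``extracted for Mather~\cite{Mat5}'' and only remarks that the constants can be made uniform on a $C^2$--neighbourhood of $L$. So the comparison must be against Mather's argument and on the internal correctness of your proposal.

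Your generating--function route is correct and it identifies the right mechanism. The crucial computation --- that for the short--time minimal action $h$ one has $\partial_p\partial_q h=-\tfrac1{2\e}\,\partial^2_{vv}L+O(1)$ with a \emph{symmetric} leading term, so that in the mixed second difference $h(P_1,Q_1)+h(P_2,Q_2)-h(P_1,Q_2)-h(P_2,Q_1)$ the cross terms $\langle m,Sw\rangle$ cancel --- is exactly what produces the gain $\sim\e|\dot\xi(t_0)|^2$ and simultaneously isolates the only competing term $-\tfrac1{2\e}|\xi(t_0)|^2_S$, which is killed by taking $C$ large after $\e$ is fixed. Your accounting of the error terms, the Gronwall bound $|\ddot\xi|=O(|\xi|+|\dot\xi|)$, the inequality $|\xi(t_0)|\le\tfrac1{C-1}|\dot\xi(t_0)|$, and the order of quantifiers ($\e$, then $C$, then $\de,\zeta$) are all consistent. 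One can check the leading constant against the flat model $L=\tfrac12|v|^2$: the straight--line crossed minimizers give $A(\a)+A(\ga)-A(a)-A(c)=\tfrac\e2|v_1-v_2|^2-\tfrac1{2\e}|x_1-x_2|^2$, which matches your estimate exactly.

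Two points are worth making precise. First, regularity: you write that $\Gamma_{pq}$ ``depends smoothly'' and that $\partial_q h$ is $C^1$; for a Tonelli Lagrangian that is merely $C^2$ the Lagrangian vector field is only $C^0$, and the cleanest justification is to pass through the Hamiltonian side, where $H$ is $C^2$ (by the implicit function theorem on $\partial_v L$), the flow is $C^1$, and hence $\partial_p w(p,q)$ exists and is continuous, with $\partial_p w=-\tfrac1\tau I+O(1)$ coming from the linearized flow. This makes $h$ genuinely $C^2$ with second derivatives controlled by $C^2$--data of $L+\phi$, which is the uniformity the lemma needs. Second, the route is genuinely different from the one Mather uses: Mather's proof does not pass through the value function but builds the comparison curves $a,c$ directly (for instance by translating $\a$ and $\ga$ by a suitable affine--in--$t$ shift of the endpoint discrepancy) and applies the strict convexity $\partial^2_{vv}L\ge aI$ to the resulting velocity decomposition. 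One should be careful here: the naive affine interpolation $a=\lambda\a+(1-\lambda)\ga$ with $\dot\lambda=-\tfrac1{2\e}$ gives the \emph{wrong} sign (the $\tfrac1{2\e}\xi$ contribution to the velocity overwhelms the gain), so one must use either the translated--by--a--constant interpolation or, as you do, the actual minimizer. Your generating--function formulation has the advantage that the cancellation by symmetry is conceptual rather than computational, and it makes the uniformity on a $C^2$--neighbourhood of $L$ (which the present paper explicitly invokes) manifest, since $\partial_p\partial_q h$ is controlled by $C^2$--data and by the lower bound on $\partial^2_{vv}L$. The trade--off is the extra regularity discussion above, which Mather's direct construction sidesteps.
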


\medskip

\begin{Lemma}\label{LKTr}\quad

Given a Tonelli lagrangian  $L_0$ and a compact subset $\De\subset TM$,  
there are  $\e>0$, $K>0$ and $\de_1>0$ 
such that for any Tonelli lagrangian $L$ with $\lV (L-L_0)|_{B_\e(\De)}\rV_{C^2}<\e$,
\linebreak
$B_\e(\De):=\{\th\in TM: d(\th,\De)<\e\}$, and any $T>0$:
\begin{enumerate}[(a)]
\item\label{KTra}
If $x\in C^1([0,T],M)$  is a solution of the Euler-Lagrange equation for $L$
with $(x,\dx)\in \De$ and 
$z\in C^1([0,T],M)$ satisfies
$$
d\big([z(t),\dz(t)],[x(t),\dx(t)]\big) \le 4 \rho\le\de_1 \qquad \forall t\in[0,T],
$$
then 
\begin{equation}\label{KTr}
\lv \int_0^T L(z,\dz)\,dt -\int_0^T L(x,\dx)\, dt -\partial_v L(x,\dx)\cdot(z-x)\Big|_0^T\rv
\le K \,(1+T)\,  \rho^2,
\end{equation}
where $z-x:=(\exp_x)^{-1}(z)$.

\item\label{KTrb} 
If $x\in C^1([0,T],M)$ is a solution of the Euler-Lagrange equation for $L$ with $(x,\dx)\in \De$
 and
the curves 
$w_1,\,w_2,\,z\in C^1([0,T],M)$ satisfy
$w_1(0)=x(0)$, $w_1(T)=z(T)$, $w_2(0)=z(0)$, $w_2(T)=x(T)$, and 
for all $\xi\in\{z,\,w_1,\,w_2\}$ we have
$$
d\big([\xi(t),\dxi(t)],[x(t),\dx(t)]\big) \le 4 \rho\le \de_1 \qquad \forall t\in[0,T], 
$$
then
$$
\lv A_L(x)+A_L(z)-A_L(w_1)-A_L(w_2)\rv \le 3 K  \rho^2  (1+T).
$$
\end{enumerate}
\end{Lemma}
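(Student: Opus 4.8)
The plan is to prove part~\eqref{KTra} by a second order Taylor expansion of $L$ along the reference Euler--Lagrange solution $x$, and then to deduce part~\eqref{KTrb} by applying part~\eqref{KTra} to each of the three comparison curves and cancelling the resulting boundary terms. First I would set up local coordinates: cover $\pi(B_\e(\De))$ by finitely many charts and take $\de_1$ smaller than the Lebesgue number of that cover, so that whenever $x(t)\in\pi(\De)$ and $d(z(t),x(t))<\de_1$ the points $x(t)$, $z(t)$ and the geodesic between them stay in one chart. After shrinking $\e$ (now fixed, depending only on $L_0$ and $\De$), the compactness of $\De$ and the fact that the second derivatives of $L$ on $B_\e(\De)$ differ from those of $L_0$ by at most $\e$ yield a constant $A>0$, independent of $L$ and of $T$, with
$$
\big| L(x+h,v+k)-L(x,v)-\partial_xL(x,v)\,h-\partial_vL(x,v)\,k\big|\le A\,\big(|h|^2+|k|^2\big)
$$
in every chart, for $(x,v)\in B_\e(\De)$ and $\max\{|h|,|k|\}\le\de_1$; this is the uniform version of the expansion used in the proof of Lemma~\ref{domifathi}. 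Finally I recall that an Euler--Lagrange solution of a $C^2$ Lagrangian is $C^2$, so $\tfrac d{dt}\,\partial_vL(x,\dx)=\partial_xL(x,\dx)$.

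For part~\eqref{KTra} I would subdivide $[0,T]$ into intervals $[\tau_{j-1},\tau_j]$ on each of which $x$ and $z$ stay in a common chart, and there write $h:=z-x$, $k:=\dz-\dx=\dot h$; the hypothesis $d([z,\dz],[x,\dx])\le 4\rho\le\de_1$ bounds $|h|$ and $|k|$ by $C_0\rho$ in the chart, $C_0$ depending only on $\De$ and the finite cover, and shrinking $\de_1$ once more we may assume $C_0\rho\le\de_1$. Integrating the Taylor estimate over $[0,T]$ gives
$$
\lv\,\int_0^T L(z,\dz)\,dt-\int_0^T L(x,\dx)\,dt-\int_0^T\big[\partial_xL(x,\dx)\,h+\partial_vL(x,\dx)\,\dot h\big]\,dt\,\rv\le A\,C_0^2\,\rho^2\,(1+T).
$$
Using $\partial_xL(x,\dx)=\tfrac d{dt}\partial_vL(x,\dx)$ and integrating by parts on each $[\tau_{j-1},\tau_j]$, the last integral equals $\sum_j\partial_vL(x,\dx)\,h\,\Big\vert_{\tau_{j-1}}^{\tau_j}$; since $\partial_vL(x(t),\dx(t))\,(z(t)-x(t))=\langle\partial_vL(x(t),\dx(t)),(\exp_{x(t)})^{-1}(z(t))\rangle$ is chart independent, the terms at the interior nodes $\tau_j$ cancel in pairs and only $\partial_vL(x,\dx)(z-x)\big|_0^T$ survives. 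This gives \eqref{KTr} with $K:=A\,C_0^2$.

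For part~\eqref{KTrb}, each of $z$, $w_1$, $w_2$ stays within $4\rho\le\de_1$ of $x$ in $TM$, so part~\eqref{KTra} applies to it and
$$
\lv A_L(\xi)-A_L(x)-\partial_vL(x,\dx)(\xi-x)\Big\vert_0^T\rv\le K\,(1+T)\,\rho^2,\qquad \xi\in\{z,w_1,w_2\}.
$$
Adding the inequality for $\xi=z$ and subtracting those for $\xi=w_1,w_2$,
$$
A_L(x)+A_L(z)-A_L(w_1)-A_L(w_2)=\partial_vL(x,\dx)\big((z-x)-(w_1-x)-(w_2-x)\big)\Big\vert_0^T+R,\qquad |R|\le 3K(1+T)\rho^2.
$$
The bracket $(z-x)-(w_1-x)-(w_2-x)=z-w_1-w_2+x$ evaluates (after applying $(\exp_{x(t)})^{-1}$) to $0$ at $t=T$ because $w_1(T)=z(T)$ and $w_2(T)=x(T)$, and to $0$ at $t=0$ because $w_1(0)=x(0)$ and $w_2(0)=z(0)$. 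Hence $|A_L(x)+A_L(z)-A_L(w_1)-A_L(w_2)|=|R|\le 3K(1+T)\rho^2$, which is the claim.

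The only point needing real care is uniformity: $\e$, $\de_1$ and $K$ must be chosen once, depending only on $L_0$ and $\De$, and must work for every $L$ with $\lV(L-L_0)|_{B_\e(\De)}\rV_{C^2}<\e$. This is automatic, since the second order remainder of $L$ is controlled by the supremum of its second derivatives on $B_\e(\De)$, which exceeds that of $L_0$ by at most $\e$. The remaining work --- subdividing $[0,T]$ so that each piece lies in one chart and checking that the integration-by-parts boundary terms at the interior nodes telescope --- is bookkeeping that is arranged once and for all by taking $\de_1$ below the Lebesgue number of a fixed finite chart cover; everything else is the standard first variation computation.
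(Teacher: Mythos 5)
Your proposal is correct and reaches the same estimates, but it differs from the paper's proof in the coordinate set-up, and one step in part~(a) is stated a bit more cleanly than it actually is.

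The paper's proof builds a single (possibly immersed) Fermi-type coordinate system on a tubular neighbourhood of $x([0,T])$, with $C^2$ bounds uniform in $T$ and $\dx(0)$. Inside that one chart the second-order Taylor expansion and the integration by parts along the full interval $[0,T]$ are done in one shot, with no interior matching to worry about; the price is having to accept that the chart is an immersion when $x$ has self-intersections or short returns. You instead fix a finite atlas once and for all, take $\de_1$ below its Lebesgue number, subdivide $[0,T]$ into $O(1+T)$ pieces on each of which $x$ and $z$ stay in one chart, and integrate by parts piecewise. That works, and avoids immersed coordinates, but the claim that the interior boundary terms ``cancel in pairs because $\partial_vL(x,\dx)(z-x)=\langle\partial_vL(x,\dx),(\exp_x)^{-1}(z)\rangle$ is chart-independent'' is slightly off: the quantity produced by the chartwise integration by parts at $t=\tau_j$ is $\partial_vL(x(\tau_j),\dx(\tau_j))\cdot(z(\tau_j)-x(\tau_j))_{\text{chart}}$, which differs from the invariant $\partial_vL(x,\dx)\cdot(\exp_x)^{-1}(z)$ (and from its value computed in the adjacent chart) by $O(\rho^2)$. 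So the telescoping is only approximate; you get an $O(\rho^2)$ discrepancy at each of the $O(1+T)$ interior nodes. This is harmless — it contributes another $O((1+T)\rho^2)$, absorbed into $K$ — but you should say so rather than claim exact cancellation. Part~(b) is essentially identical to the paper's argument: apply~(a) to $z$, $w_1$, $w_2$, combine, and note that the boundary conditions kill the surviving $\partial_vL(x,\dx)\cdot(\cdot)$ term at $t=0$ and $t=T$.
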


\noindent{\bf Proof:}
\begin{enumerate}[(a)]
\item
We use a coordinate system on a tubular neighbourhood of $x([0,T])$ with 
a bound in the  $C^2$ norm independent of $T$ and of $\dx(0)$. 
In case $x$ has self-intersections
or short returns the coordinate system is an immersion. 

We have that
\begin{align*}
L(z,\dz)-L(x,\dx) = \partial_x L(x,\dx)(z-x) +\partial_v L(x,\dx)(\dz-\dx) +O(\rho^2),
\end{align*}
here $O(\rho^2)\le K\, \rho^2$
where $K$ depends on the second derivatives of $L$ on a small neighbourhood of 
 the compact $\De$ and hence
it can be taken uniform on a $C^2$ neighbourhood of $L$. Since $x$ satisfies the 
Euler-Lagrange equation for $L$,
\begin{align*}
L(z,\dz)-L(x,\dx) = \tfrac d{dt}[\partial_v L(x,\dx)(z-x)] +O(\rho^2).
\end{align*}
This implies \eqref{KTr}.

\item By item~\eqref{KTra}
\begin{align*}
A_L(w_1) -A_L(x) &\le \partial_vL(x,\dx) (w_1-x)\Big|_0^T + K \rho^2 (1+T)
\\
&\le   \partial_vL(x(T),\dx(T)) (z(T)-x(T)) + K \rho^2 (1+T) .
\\
A_L(w_2)-A_L(x) 
&\le -\partial_vL(x(0),\dx(0)) (z(0)-x(0)) + K \rho^2 (1+T) .
\\
A_L(x)-A_L(z) 
&\le  -\partial_vL(x,\dx) (z-x)\Big|_0^T + K \rho^2 (1+T)
\\
&\le -\partial_vL(x(T),\dx(T)) (z(T)-x(T)) 
\\
&\hskip 10pt 
+\partial_vL(x(0),\dx(0)) (z(0)-x(0))+ K \rho^2 (1+T) .
\end{align*}
Adding these inequalities we get
$$
A_L(w_1)+A_L(w_2)-A_L(x)-A_L(z)
\le 3 K \rho^2 (1+T).
$$
The remaining inequality is obtained similarly.
\qed
\end{enumerate}

\openup -0.5pt
The following proposition has its origin in Yuan and Hunt \cite{YH}, 
the present proof uses some arguments by Quas and Siefken \cite{QS}.
Proposition~\ref{Ppert} together with  proposition~\ref{palga} imply theorem~\ref{HYP}.

\begin{Proposition}\label{Ppert}\quad

Suppose that for every $\de>0$ there is a periodic orbit $\Ga\subset \La\subset E_L^{-1}\{c(L)\}$ 
 such that 
\begin{equation}\label{ALGA2}
c(\Ga,\cA(L))<\de\,\ga(\Ga) 
\qquad\text{and}\qquad
A_{L+c(L)}(\Ga) < \de^2 \, \ga(\Ga)^2,
\end{equation}
where \qquad
$\ga(\Ga):=\min\{ d_{TM}(\Ga(s),\Ga(t)):|s-t|_{\rm mod(\per{\Ga})}\ge 1\,\}$.

Then for any $\e>0$ there is $\phi\in C^2(M,\re)$ with $\lV \phi\rV_{C^2}<\e$
such that $\Ga \subset \cA(L+\phi)$,
where $\Ga$ is one of the periodic orbits in \eqref{ALGA2}.
\end{Proposition}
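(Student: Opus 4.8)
The plan is to take for $\phi$ the channel $\phi_\Ga$ of \eqref{defphi}: a nonnegative $C^2$ potential supported in a thin tube about $\pi(\Ga)$, vanishing to second order along $\pi(\Ga)$ and transversally coercive there, of width $\asymp\ga(\Ga)$ chosen small enough that the finitely many near self-intersections of $\pi(\Ga)$ get resolved; one fixes the free constants so that $\lV\phi\rV_{C^2}<\e$, which pins the transversal quadratic coefficient at $\lesssim\e$ and leaves the wall height $\asymp\e\,\ga(\Ga)^2$. Since $d\phi$ vanishes on $\pi(\Ga)$, $\Ga$ is still an Euler--Lagrange orbit of $L+\phi$. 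To get $\Ga\subset\cA(L+\phi)$ it suffices, by \eqref{mane} and Ma\~n\'e's theorem that an invariant probability is minimizing iff supported in the Aubry set, to show the measure carried by $\Ga$ is $(L+\phi)$-minimizing; by \eqref{minmeas} and Fathi--Siconolfi (every flow-invariant measure is closed) this is a single inequality: no closed curve does better than $\Ga$ in average $(L+\phi)$-action --- and, using \eqref{cloga}, it is a comparison of closed curves against $\Ga$. The form I would actually work with is dual: produce $U\in C(M)$ dominated for $L+\phi$ and calibrated by $\Ga$, sought as $U=u+\psi$ with $u$ a weak KAM solution for $L$ (calibrated by $\cA(L)$ and obeying the quadratic estimate of Lemma~\ref{domifathi} near $\cA(L)$, which the hypothesis $c(\Ga,\cA(L))<\de\,\ga(\Ga)$ makes applicable $O(\ga(\Ga))$-close to $\Ga$), and $\psi$ a correction supported in the tube, transversally flat, whose restriction to $\pi(\Ga)$ absorbs the calibration defect of $u$ along $\Ga$ --- a defect which by \eqref{ALGA2} totals $<\de^2\ga(\Ga)^2$.

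The core is to control a competing curve $\sigma$. If $\sigma$ spends time $\tau$ outside the tube it pays at least (wall height)$\cdot\tau$ in $\int\phi(\sigma)$, so --- as $A_{L+c(L)}(\Ga)<\de^2\ga(\Ga)^2$ is tiny --- $\sigma$ cannot compete unless it lies, up to negligible time, in the thin tube. There $\sigma$ is $O(\ga(\Ga))$-close to $\pi(\Ga)$ in $M$ and, being (near-)minimizing at the critical energy, has essentially determined velocity, hence shadows $\Ga$ in $TM$. Mather's Crossing Lemma~\ref{CL} then forbids $\sigma$ to cross $\Ga$ with velocity discrepancy larger than $C$ times the base distance --- such a crossing strictly lowers the action, by a fixed multiple of the square of that discrepancy --- so $\sigma$ must run along $\Ga$, changing branch only at the near self-intersections of $\Ga$. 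Up to errors estimated through Lemma~\ref{LKTr}, $\sigma$ is then a concatenation of arcs of $\Ga$ glued at near self-intersections: a corner-cut copy of $\Ga$, Yuan--Hunt's ``class I'' picture.

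Finally, corner-cutting cannot help: a cut replaces a sub-arc of $\Ga$, of $(L+c(L))$-action at most the total $<\de^2\ga(\Ga)^2$, by a short reconnecting arc whose action --- again by Lemmas~\ref{CL} and~\ref{LKTr}, with the constant $C>1$ of \eqref{CL>1} --- is at least a fixed multiple of the square of the jump, hence $\gtrsim\ga(\Ga)^2$ since jumps have $d_{TM}$-size $\ge\ga(\Ga)$; with $\de$ small enough each reconnection costs strictly more than it saves. Hence no corner-cut, then no tube-confined curve, then no closed curve beats $\Ga$; with the first regime this gives the minimizing inequality, equivalently the function $U$, so $\Ga\subset\cA(L+\phi)$. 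The main obstacle is exactly this middle step --- proving uniformly that an action-cheap closed curve trapped in the tube must be a corner-cut of $\Ga$, and then bookkeeping the budget so the smallness $A_{L+c(L)}(\Ga)<\de^2\ga(\Ga)^2$ is never undercut by a reconnection. Here the Quas--Siefken method replaces the Yuan--Hunt symbolic argument, and the genuinely Lagrangian complications --- perturbations only $C^2$, living on the base $M$ rather than in phase space, comparisons made against static orbits --- are absorbed by Lemmas~\ref{CL}, \ref{LKTr} and \ref{domifathi} used together.
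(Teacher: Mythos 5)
Your high-level setup — nonnegative channel $\phi$ as in \eqref{defphi}, Crossing Lemma~\ref{CL} to control how competitors cross $\Ga$, budget comparison driven by the smallness $A_{L+c(L)}(\Ga)<\de^2\ga(\Ga)^2$ — is the right raw material, but the route you propose is genuinely different from the paper's, and the gap you flag in your own middle step is real. The paper does \emph{not} argue by showing $\mu_\Ga$ is minimizing via a comparison of closed curves against $\Ga$, and it never constructs a dominated function calibrated by $\Ga$. Instead it takes an arbitrary semi-static ray $x:]-\infty,0]\to M$ for $L+\phi$ and shows its $\alpha$-limit is $\Ga$; by Ma\~n\'e's Theorem V.(c), $\alpha$-limits of semi-static orbits are static, so $\Ga\subset\cA(L+\phi)$ follows directly. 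Concretely: it sets $\L=L+\phi+c(L+\phi)-du$ with $u$ a $C^{1+\Lip}$ subsolution (so $L-du\ge 0$), defines an excursion hierarchy $S_k,T_k,C_k,B_k$ tracking when $\psi_t(\tt)$ enters and leaves a thin tube around $\Ga$, and proves (Claim~\ref{stcs} and the final estimates via \eqref{farp}--\eqref{restp}) that each excursion $[T_{k+1},T_k]$ carries $A_\L$-action bounded below by a fixed positive constant coming from \eqref{rho2}. Since the total $A_\L$-action of a semi-static ray is bounded by $\sup\Phi^\L_{c(\L)}<\infty$, there are only finitely many $T_k$'s; expansivity of $\La(L+\phi)$ then forces the $\alpha$-limit to be $\Ga$. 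This is the Quas--Siefken picture, which precisely replaces the Yuan--Hunt ``corner-cut of a periodic orbit'' classification that you sketch.

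The step you identify as the ``main obstacle'' is exactly where your route would fail or require major new work. First, Lemma~\ref{CL} gives a lower bound on the action \emph{decrease} obtained by uncrossing a transversal intersection (it says $A(\a)+A(\ga)-A(a)-A(c)>\eta\,d^2$); it does not give a lower bound $\gtrsim\ga(\Ga)^2$ on the action of a reconnecting arc — the latter could be tiny or negative at critical energy. What you actually get from~\ref{CL} is that a near-optimal curve close to $\Ga$ in $M$ must also be close to $\Ga$ in $TM$ (that is what the paper's Claim \eqref{cl2} extracts), not a positive cost per corner. Second, for a closed competitor $\sigma$ the relevant quantity is the \emph{average} action $A_{L+\phi}(\sigma)/\per(\sigma)$, and a curve of long period winding many times in the tube with occasional detours is not immediately confined by the wall-height argument; controlling this uniformly is the hard classification problem the paper sidesteps by working with a single semi-static ray (whose total $\L$-action is \emph{a priori} bounded, with no need to normalize by the period). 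So: correct target, correct ingredients, but you would still have to supply the excursion/bookkeeping mechanism (or an equally quantitative substitute for the corner-cut classification) to close the argument — and that mechanism is essentially the content of the paper's proof.
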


{\bf Idea of the Proof:}

We choose $\de=\de(\e)$ sufficiently small and use the periodic orbit $\Ga$
given by the hypothesis.
We perturb the Lagrangian by a potential $\phi$ which is a 
non-negative channel centered at $\pi(\Ga)$
defined in~\eqref{defphi}. 
The curve $\Ga$ is a periodic orbit for the flows of $L$ and of $L+\phi$.
We show that $\Ga$ is contained in the Aubry set $\cA(L+\phi)$ by proving that
any semi-static curve $x:]-\infty,0]\to M$ for $L+\phi$ has 
$$
\a\text{-limit of }(x,\dx)=\Ga;
$$
because by Ma\~n\'e \cite[Theorem V.(c)]{Ma7}, $\a$-limits of semi-static orbits are static.
This is done by calculating the action of each segment of the semi-static which is spent 
outside of a small neighbourhood of $\Ga$, and proving that it has a uniform positive
lower bound. Since the total action of a semi-static is finite, the quantity of those segments 
is finite. Thus the semi-static eventually stays forever in a small neighbourhood of $\Ga$.
The expansivity of $\La(L+\phi)\supset\cN(L+\phi)$ implies that the $\a$-limit of the semi-static is $\Ga$.

\begin{Lemma}\label{Lgade}\quad

If $\cA(L)$ has no periodic orbits and $\Ga_n$ is a sequence of periodic orbits with
\begin{gather*}
c(\Ga_n,\cA(L))<\de_n \cdot \diam(\cA(L)),
\\
\ga_n:=\min\{d(\Ga_n(s),\Ga_n(t)) : |s-t|_{\rm{mod(per\, }\Ga_n)}\ge 1\,\}.
\end{gather*}
Then \quad
$\lim_n \de_n =0$ \quad  $\then$ \quad $\lim_n\ga_n=0$.
\end{Lemma}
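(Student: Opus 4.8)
The plan is to argue by contradiction. Suppose $\de_n\to 0$ but $\ga_n\not\to 0$; passing to a subsequence we may assume $\ga_n\ge\ga_0$ for some fixed $\ga_0>0$ and all $n$. Since $\de_n\to 0$, for $n$ large every point of $\Ga_n$ lies in the set $K:=\overline{B\big(\cA(L),\diam(\cA(L))\big)}$, which is a compact subset of $TM$ because $\cA(L)\subset E_L^{-1}\{c(L)\}$ is compact (and points within a bounded distance of a bounded set have bounded speed). The goal is to extract from the $\Ga_n$ a limiting periodic orbit contained in $\cA(L)$, contradicting the hypothesis that $\cA(L)$ has no periodic orbit. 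The only thing standing in the way of an immediate compactness argument is that the periods $T_n:=\per(\Ga_n)$ could a priori be unbounded, so the main step is to rule this out, and this is where the hypothesis $\ga_n\ge\ga_0$ is used.

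Write $\Ga_n(s)=\vr_s(\th_n)$ for a base point $\th_n\in\Ga_n$. Since $\ga_n$ is well defined there is a pair of times with circular distance $\ge 1$ along $\Ga_n$, which forces $T_n\ge 2$. Consider the $\lfloor T_n\rfloor$ points $\Ga_n(0),\Ga_n(1),\dots,\Ga_n(\lfloor T_n\rfloor-1)$. For $0\le j<k\le\lfloor T_n\rfloor-1$ the circular time distance between $\Ga_n(j)$ and $\Ga_n(k)$ equals $\min\{k-j,\ T_n-(k-j)\}$, which is $\ge 1$ since $k-j\ge 1$ and $T_n-(k-j)\ge T_n-\lfloor T_n\rfloor+1\ge 1$. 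Hence, by the definition of $\ga_n$, these points are pairwise $\ga_0$-separated in $d_{TM}$. As they all lie in the compact set $K$, their number is at most the maximal cardinality $N_0=N_0(K,\ga_0)$ of a $\ga_0$-separated subset of $K$, so $\lfloor T_n\rfloor\le N_0$, i.e. $T_n<N_0+1$ for all large $n$. This period bound is the heart of the proof.

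With the periods bounded, a further subsequence gives $T_n\to T_\infty\in[2,N_0+1]$ and, by compactness of $K$, $\th_n\to\th_\infty\in K$. By continuity of the Euler--Lagrange flow, $\vr_s(\th_n)\to\vr_s(\th_\infty)$, uniformly for $s$ in compact sets; letting $n\to\infty$ in $\vr_{T_n}(\th_n)=\th_n$ gives $\vr_{T_\infty}(\th_\infty)=\th_\infty$ with $T_\infty>0$, so $\Ga_\infty:=\{\vr_s(\th_\infty):s\in\re\}$ is a periodic orbit. Moreover, for every $s$ we have $d\big(\vr_s(\th_n),\cA(L)\big)\le\de_n\diam(\cA(L))\to 0$ and $\vr_s(\th_n)\to\vr_s(\th_\infty)$, so $\vr_s(\th_\infty)\in\cA(L)$ because $\cA(L)$ is closed; hence $\Ga_\infty\subset\cA(L)$. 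This contradicts the assumption that $\cA(L)$ has no periodic orbit, and the contradiction shows $\ga_n\to 0$.

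The only delicate point is the period bound in the second paragraph; once the periods are known to be bounded, the remainder is a routine compactness/limit argument. (If one insists on separating out the possibility that $\Ga_\infty$ reduces to a singularity, one invokes that the orbits in question lie in the energy level $E_L^{-1}\{c(L)\}$ and have period $>10$ by \eqref{pl10}, so $T_\infty\ge 10$ and $\Ga_\infty$ is a genuine nonconstant closed orbit; but in the setting of this paper the singularity case is in any event treated separately.)
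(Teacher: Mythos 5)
Your proof is correct and uses essentially the same two ingredients as the paper: the compactness of a fixed neighbourhood of $\cA(L)$ combined with the separation of time-$1$-spaced points along $\Ga_n$, and the hypothesis that $\cA(L)$ contains no periodic orbit. The only difference is logical organization: the paper first shows directly that $\per(\Ga_n)\to\infty$ (otherwise a limiting periodic orbit appears in $\cA(L)$) and then pigeonholes the points $\Ga_n(4m)$ in a shrinking compact neighbourhood to force $\ga_n\to 0$, whereas you run the same pigeonhole argument in the contrapositive direction to first bound the periods and then invoke the identical limiting-orbit contradiction.
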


\begin{proof}
Let $T_n$ be the period of $\Ga_n$.
First we prove that $\lim_n T_n=\infty$.
If not, we can extract a subsequence where $\th:=\lim_n \Ga_n(0)\in\cA(L)$ and $S:=\lim_n T_n$ exist.
Then $\th$ is a periodic point in $\cA(L)$ which contradicts the hypothesis.

Consider the points $\Ga_n(4m)$, $0\le m\le M_n:=[\tfrac 14 T_n]$, $m\in\na$.
Since $\lim_nT_n=\infty$, the quantity $M_n$ of these points tends to infinity.
Therefore 
$$
\ga_n \le \min_{m_1\ne m_2}d(\Ga_n(m_1),\Ga_n(m_2))
\overset{n}\longrightarrow 0.
$$
\end{proof}
\openup +0.5pt

\bigskip

{\bf Proof of Proposition~\ref{Ppert}:}

By adding a constant to $L$ we can assume that 
\begin{equation}\label{clo}
c(L)=0.
\end{equation}
Fix $K_1>0$ such that
\begin{equation}\label{dK1}
[E_L\le c(L)+1]\subset [|v|\le K_1].
\end{equation}

Bernard \cite{Be3} after Fathi and Siconolfi  \cite{FaSi} proves that there is 
 a $C^{1+\Lip}$ critical subsolution $u$ of the Hamilton-Jacobi equation 
for $L$, $H(x, d_xu)\le c(L)$. Thus
\begin{equation}\label{Ldu}
L - du \ge 0.
\end{equation} 

By Gronwall's inequality and the continuity of Ma\~n\'e's critical value $c(L)$
(see 
\cite[Lemma~5.1]{CP})  there is $\a>0$ and $\ga_0$  such that if 
$\lV\phi\rV_{C^2}\le 1$,  $0<\ga<\ga_0$ and
$\Ga$ is a periodic orbit for $L+\phi$ 
with energy smaller than $c(L+\phi)+1$ 
then
\begin{equation}\label{tima}
d(\vr^{L+\phi}_s(\vrt),\Ga)\le\frac\ga 4 
\quad \text{ and }\quad
d(\vr^{L+\phi}_t(\vrt),\Ga)\ge\frac\ga3
\quad\then \quad 
|t-s|> \a.
\end{equation}

The graph property states that the projection $\pi:\cA(L)\to M$ has a Lipschitz
inverse (see Ma\~n\'e \cite{Ma7}). The Lipschitz constant is the same as $C$ in
Mather's Crossing Lemma~\ref{CL}.
The Aubry set has energy $c(L)$ and $c(L+\phi)$ is continuous on $\phi$.
Then one can choose 
\begin{equation}\label{e1ze}
\e_1<\zeta
\end{equation}
 and $K$, $C>1$ in Lemma~\ref{CL}  such that if
$\lV \phi\rV_{C^2}<\e_1$ then $\cA(L+\phi)$ is a graph with Lipschitz constant $C$.

By the upper semicontinuity of the Ma\~n\'e set \cite[lemma 5.2]{CP} we can 
choose a neighbourhood $U$ of $\cN(L)$
and $0<\e_2<\e_1$  such that if $\lV \phi\rV_{C^2}<\e_2$
then the set 
$$
\La(\phi) := \bigcap_{t\in\re}\vr_{-t}^{L+\phi}(\ov{U})
$$
is hyperbolic and contains $\cN(L+\phi)$.
Take $0<\e_3<\e_2$ such that $\La(\phi)$ has uniform constants of 
hyperbolicity (\ref{B71}), expansivity (\ref{dfe}, \ref{rue}) and canonical coordinates (\ref{caco})
for all $\lV\phi\rV_{C^2}<\e_3<\e_2$.

Write
\begin{equation}\label{54gad}
\gad:=\ga(\Ga).
\end{equation}
We can assume that $\cA(L)$ has no periodic points.
By lemma~\ref{Lgade}, $\ga_\de$ is small when $\de$ is small.
Given $0<\e<\e_3$, choose $0<\de\ll \e$ and a periodic orbit $\Ga$
satisfying~\eqref{ALGA2}
with $\de$
and     $\ga_\de$ so small that
for all $\lV\phi\rV_{C^2}<\e_3$,
\begin{align}
\gad &< \epsilon_0  \hskip 6pt \text{where $\epsilon_0$ is a  flow expansivity constant for $\La(\phi)$ 
 as in
\ref{dfe} and \ref{rue}. }
\label{gadeo1}
\\
2 \gad &< \de_1 \hskip 5pt \text{with }\de_1:=\de[K_1]
\text{  from lemma~\ref{CL}, where  $K_1$ is from~\eqref{dK1}.
}
\label{dfde1}
\\
\gad &< \be_0 \hskip 4pt
\text{where  $\be_0$  is from proposition~\ref{B71} for $\La(\phi)$.}
\label{gadbe0}
\\
\gad &<  \eta_0
\hskip 9pt
 \text{where $\eta_0$ is from the canonical coordinates in \ref{B4} for $\La(\phi)$,}
\label{gadeta}
\end{align}
and such that writing 
\begin{equation}\label{dgad}
\ogd:=\frac{\gad}{3C(B+1)} <\tfrac 12 \,\gad,
\end{equation}
we have that 
\begin{align}
&\ogd<\ga_0\hskip 2cm\text{ where $\ga_0$ is from~\eqref{tima},}
\label{gatima}
\end{align}

\noindent and there is $\rho$,
\begin{equation}\label{rho14ga1}
 \de\,\gad<\rho<  \tfrac 14 \ogd\ll 1
\end{equation} 
such that
\begin{gather}
\tfrac 1{4}\,\e\, \rho^{2} >  \de^2\,(\gad)^2,
\label{rho11}
\\
 C\rho > \tfrac1{\sqrt{\eta_1}}\,{\de\,\ga_\de},
 \label{rho151}
 \\
\big( \tfrac 1{32}\, \e\, (\ogd)^2 - \de^2(\gad)^2\big) \a
- 6 K D^2  C^2 (B+1)^2\rho^2
-3\, \de^2(\gad)^2
>0,
\label{rho2}
 \end{gather}
where $B$ is from Lemma~\ref{B4}, 
$C=C[K_1]$ and $\eta_1=\eta[K_1]$ are from Lemma~\ref{CL} with
\begin{equation}\label{C>1}
C>1,
\end{equation}
 $D$ is from Proposition~\ref{B71} 
and $K$ is from Lemma~\ref{LKTr}
applied to the compact
\linebreak
$\Delta=
[E_L\le c(L)+5]$.
Inequality \eqref{rho2} implies 
\begin{equation}\label{rho3}
 \tfrac 1{32}\, \e\, (\ogd)^2 > \de^2(\gad)^2.
\end{equation}

Let $\phi:M\to[0,1]$ be a $C^\infty$ function such that  $\lV \phi\rV_{C^{2}}<10\, \e$ and
\begin{equation}\label{defphi}
0\le \phi(x)=
\begin{cases}
0 &\text{if } x\in\pi(\Ga),
\\
\ge \tfrac 14\, \e \, \rho^{2} 
&\text{if } d(x,\pi\Ga)\ge \rho,
\\
\tfrac 1{ 32 }\,\e\, (\ogd)^{2}
&\text{if }d(x,\pi\Ga)\ge \tfrac 14 \ogd.
\end{cases}
\end{equation}

Using $u$ from \eqref{Ldu} write
\begin{equation}\label{defLL}
\L:=L+\phi+c(L+\phi)-du.
\end{equation}
The Euler-Lagrange flow of $\L$ and the sets $\cA(\L)$, $\cN(\L)$ are the same as those of
$L+\phi$. In particular the hyperbolicity constants \eqref{gadeo1}--\eqref{gadeta} and 
Lipschitz graphs contants \eqref{C>1} remain valid for $\L$.
\medskip

\pagebreak

\begin{Claim}
{ If $\de$ is small enough then}
\begin{enumerate}
\item \label{cl1}
We have that
$$
\inf\limits_{d(s,t)_{\text{mod }T}\ge 1} d\big(\pi\Ga(s),\pi\Ga(t)\big)
 >\tfrac 34 \,\ogd.
 $$
 In particular the neighbourhood $B(\pi\Ga,\tfrac 38\ogd)$ of $\pi\Ga$ of radius 
 $\frac 38 \ogd$ has no self intersections, i.e. it is homeomorphic to
 $S^1\times ]0,1[^{\dim M-1}$.
 
\item\label{cl2}
 If  $x:]-\infty,0]\to M$ is a semi-static orbit for $\L$ then for all $t\le -1$
\begin{align}\label{ecl21}
&\text{either }\quad  
d\big([x(t),\dx(t)],\Ga\big) \le \tfrac{ \de\, \ga(\Ga)}{\sqrt{\eta_1}}
\quad\text{ or } \quad d\big([x(t),\dx(t)],\Ga\big) \le C\, d\big(x(t),\pi\Ga\big),
\\
&\text{or }\quad d(x(t),\pi\Ga)\ge \de_1,
\label{ecl31}
\end{align}
where   $\eta_1=\eta(K_1)$, $C=C(K_1)$
and $\de_1=\de_1(K_1)$ are from Lemma~\ref{CL} for $K=K_1$ from~\eqref{dK1}.
\end{enumerate}
\end{Claim}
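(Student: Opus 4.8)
The plan is to prove the two items separately: item~\ref{cl1} is a soft consequence of the fact that $\cA(L)$ is a Lipschitz graph, while item~\ref{cl2} is where Mather's Crossing Lemma~\ref{CL} does the real work, and is the crux.

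\emph{Item~\ref{cl1}.} Fix $s,t$ with $d(s,t)_{\text{mod }T}\ge1$, so $d_{TM}(\Ga(s),\Ga(t))\ge\gad$ by the definition of $\gad=\ga(\Ga)$. Using $c(\Ga,\cA(L))<\de\gad$ from~\eqref{ALGA2}, choose $\th_s,\th_t\in\cA(L)$ with $d_{TM}(\Ga(s),\th_s),d_{TM}(\Ga(t),\th_t)<\de\gad$. If $d(\pi\Ga(s),\pi\Ga(t))\le\tfrac34\ogd$ then $d(\pi\th_s,\pi\th_t)<\tfrac34\ogd+2\de\gad$, and since $\pi|_{\cA(L)}$ has a Lipschitz inverse with constant $C$, $d_{TM}(\th_s,\th_t)\le C\,d(\pi\th_s,\pi\th_t)$; as $\ogd=\gad/3C(B+1)$ with $C>1$, $B\ge0$, this yields $d_{TM}(\Ga(s),\Ga(t))<2\de\gad+C(\tfrac34\ogd+2\de\gad)<\gad$ once $\de$ is small, a contradiction. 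Hence $d(\pi\Ga(s),\pi\Ga(t))>\tfrac34\ogd$. In particular $\pi\Ga$ has no self-intersections at time-distance $\ge1$, and the $\tfrac38\ogd$-balls around $\pi\Ga(s),\pi\Ga(t)$ are disjoint whenever $d(s,t)_{\text{mod }T}\ge1$; combined with the uniform bounds on speed and acceleration of Euler--Lagrange solutions on the energy level --- so that on any time window of length $1$ the curve $\pi\Ga$ is an embedded arc with a uniform normal tube radius, which $\ogd$ can be taken below --- this shows the tube map $(\tau,w)\mapsto\exp_{\pi\Ga(\tau)}(w)$, $w\perp\dot\Ga(\tau)$, $|w|<\tfrac38\ogd$, is an embedding onto $B(\pi\Ga,\tfrac38\ogd)$, which is thus homeomorphic to $S^1\times ]0,1[^{\dim M-1}$.

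\emph{Item~\ref{cl2}.} Fix $t\le-1$. If $d(x(t),\pi\Ga)\ge\de_1$ we are in case~\eqref{ecl31}; otherwise put $r:=d(x(t),\pi\Ga)<\de_1$ and $D:=d_{TM}([x(t),\dx(t)],\Ga)$, assume the second alternative of~\eqref{ecl21} fails, i.e. $D>Cr$, and deduce the first, $D\le\de\gad/\sqrt{\eta_1}$. Let $\th=\Ga(\tau)\in\Ga$ realise $d(x(t),\pi\th)=r$, so $d_{TM}([x(t),\dx(t)],\th)\ge D>Cr=C\,d(x(t),\pi\th)$; both $[x(t),\dx(t)]$ and $\th$ have $|v|\le K_1$, since $\Ga\subset E_L^{-1}\{0\}$ and, $x$ being semi-static for $\L$, $(x,\dx)\in\cN(\L)$ has $E_L=\phi+c(L+\phi)\le c(L)+1$ by~\eqref{clo},~\eqref{dK1} (using $c(L+\phi)\le0$ and $\|\phi\|_{C^0}<1$). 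Hence the Crossing Lemma~\ref{CL} with $K=K_1$ applies on $[t-\e,t+\e]$ (licit since $\e$ is small, $t+\e\le0$, $\|\phi\|_{C^2}<\zeta$) to the Euler--Lagrange solutions $x$ at $t$ and $\Ga$ at $\tau$, producing $C^1$ curves $a,c:[t-\e,t+\e]\to M$ with $a(t-\e)=x(t-\e)$, $a(t+\e)=\Ga(\tau+\e)$, $c(t-\e)=\Ga(\tau-\e)$, $c(t+\e)=x(t+\e)$ and
\[
A_{L+\phi}(x|_{[t-\e,t+\e]})+A_{L+\phi}(\Ga|_{[\tau-\e,\tau+\e]})-A_{L+\phi}(a)-A_{L+\phi}(c)>\eta_1\,D^2.
\]
Since these four curves all have time-length $2\e$ and close up in pairs, the constant $c(L+\phi)$ and the exact form $du$ drop out of the combination, so the inequality persists with $A_\L$ in place of $A_{L+\phi}$. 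Now $\L$ has critical value $0$ and $x$ is semi-static, so $A_\L(x|_{[t-\e,t+\e]})=\Phi^{\L}_{0}(x(t-\e),x(t+\e))$; by the triangle inequality for $\Phi^{\L}_{0}$ through $\Ga(\tau\pm\e)$, with $\Phi^{\L}_{0}(x(t-\e),\Ga(\tau+\e))\le A_\L(a)$, $\Phi^{\L}_{0}(\Ga(\tau-\e),x(t+\e))\le A_\L(c)$, and, going the long way round the loop ($\per(\Ga)>10>2\e$), $\Phi^{\L}_{0}(\Ga(\tau+\e),\Ga(\tau-\e))\le A_\L(\Ga)-A_\L(\Ga|_{[\tau-\e,\tau+\e]})$, one obtains $A_\L(x|_{[t-\e,t+\e]})+A_\L(\Ga|_{[\tau-\e,\tau+\e]})-A_\L(a)-A_\L(c)\le A_\L(\Ga)$. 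Combining, $\eta_1 D^2<A_\L(\Ga)$. Finally $A_\L(\Ga)=A_L(\Ga)+\int_\Ga\phi+c(L+\phi)\per(\Ga)=A_L(\Ga)+c(L+\phi)\per(\Ga)$ (closed curve, $\phi\equiv0$ on $\pi\Ga$), and $c(L+\phi)\le c(L)=0$ since $\phi\ge0$, so $A_\L(\Ga)\le A_L(\Ga)<\de^2\gad^2$ by~\eqref{ALGA2},~\eqref{clo}. Thus $\eta_1 D^2<\de^2\gad^2$, i.e. $D<\de\gad/\sqrt{\eta_1}$, the first alternative of~\eqref{ecl21}.

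The main obstacle is item~\ref{cl2}: one must turn ``the velocity of the semi-static orbit at time $t$ is not $C$-aligned with $\Ga$'' into a definite lower bound on an action gain, and the Crossing Lemma is the only device that converts a mismatch of derivatives into action. The delicate point is that this gain is harvested from the pair $(x,\Ga)$ in which $\Ga$ is merely a closed orbit, not a minimiser, so to cash it in one closes $\Ga$ up around its own loop and absorbs everything into $A_\L(\Ga)$ --- which is forced to be negligible precisely because the channel $\phi$ vanishes on $\pi(\Ga)$ and $c(L+\phi)\le c(L)$. The constant bookkeeping in item~\ref{cl1} (why $\ogd=\gad/3C(B+1)$ is the right scale) and the uniformity of the Crossing Lemma constants for $\L$ are routine but must be done with care.
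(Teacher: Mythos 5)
Your proposal is correct and follows essentially the same approach as the paper: item~\ref{cl1} comes from the Lipschitz graph property of $\cA(L)$ together with the $c(\Ga,\cA(L))<\de\ga(\Ga)$ estimate, and item~\ref{cl2} applies Mather's Crossing Lemma to $x$ and $\pi\Ga$ and closes the loop around $\Ga$ to harvest the action gain. The only cosmetic difference is that you derive $\eta_1\,d([x(t),\dx(t)],\Ga)^2 < A_\L(\Ga) < \de^2\ga_\de^2$ directly via the triangle inequality for $\Phi^\L_0$, whereas the paper phrases the same chain of inequalities as a contradiction with the semi-static property of $x$.
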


{\it Proof:}

Let $T=\per(\Ga)$ be the period of $\Ga$.

\eqref{cl1}. Given $s, t\in[0,T]$, by~\eqref{ALGA2} there are $\tt_s,\tt_t\in\cA(L)$ such that
\begin{align*}
d(\pi\Ga(s),\pi\tt_s)&\le d(\Ga(s),\tt_s) < \de\, \ga(\Ga),
\\
d(\pi\Ga(t),\pi\tt_t)&\le d(\Ga(t),\tt_t) < \de\, \ga(\Ga).
\end{align*}
If $d(s,t)_{\text{\rm mod }T}\ge 1$  then 
\begin{align*}
d(\tt_s,\tt_t)&\ge d(\Ga(s),\Ga(t))-d(\Ga(s),\tt_s)-d(\Ga(t),\tt_t)
\\
&> \ga(\Ga)-2\de\, \ga(\Ga).
\end{align*}
Since $\tt_s,\tt_t\in \cA(L)$, by the graph property~\ref{CL} for $\cA(L)$ 
and \eqref{dgad}, \eqref{CL>1} we have that 
$$
d(\pi\tt_s,\pi\tt_t)\ge \tfrac 1C\,d(\tt_s,\tt_t)
\ge \frac {\ga(\Ga)(1-2\de)}C > \ogd -2\de\, \gad.
$$

 Then
 \begin{align*}
 d(\pi\Ga(s),\pi\Ga(t)) &\ge d(\pi\tt_s,\pi\tt_t)-d(\pi\Ga(s),\pi \tt_s)-d(\pi\Ga(t),\pi \tt_t) \\
 &>\ogd-4\de\, \gad> \tfrac 34\, \ogd.
 \end{align*}

\eqref{cl2}. Suppose by contradiction that there exists $t\le -1$ such that 
\begin{gather}\label{posso1}
d(x(t),\pi\Ga)<\de_1 \qquad \text{ and }
\\
 d\big([x(t),\dx(t)],\Ga\big)^2 > \frac{\de^2\, \ga(\Ga)^2}{\eta_1} 
\quad\text{ and } \quad d\big([x(t),\dx(t)],\Ga\big) > C\, d\big(x(t),\pi\Ga\big).
\label{posso3}
\end{gather}

First we check that we can apply the Crossing Lemma~\ref{CL} to $\L$.
Given  $\ga:[0,S]\to M$ we have that 
$$
\oint_\ga c(L+\phi)-du = S \, c(L+\phi) -u(\ga(S))+u(\ga(0))
$$
depends only on the time interval $S$ and the endpoints of $\ga$.
Thus instead of  $\L$ in~\eqref{defLL}, it is enough to apply Lemma~\ref{CL} to $L+\phi$, for whom 
it holds if $\lV \phi\rV_{C^2}<\e_1<\zeta$ by \eqref{e1ze}.

Now we check the speed hypothesis in Lemma~\ref{CL}.
Observe that 
$$
E_\L=v\,\L_v-\L=E_{L+\phi}-c(L+\phi)=E_L-\phi-c(L+\phi),
$$
 and that by~\eqref{minmeas}
 $$
 c(\L)=c\big(L+\phi+c(L+\phi)\big)=0.
 $$
 Therefore
 $$
 \cN(\L)\subset [E_\L= c(\L)]
 \subset [E_L=\phi+c(L+\phi)].
 $$
 If $\phi$ is small enough 
$$
\phi+c(L+\phi)<c(L)+1,
$$
and then $\dx(t)\in
\cN(\L)\subset [E_L\le c(L)+1]$. 
By hypothesis in \ref{Ppert},
$\Ga\subset [E_L=c(L)]$.
Therefore by~\eqref{dK1},
$$
\forall t \qquad
\dx(t),\; \Ga(t) \in [E_L\le c(L)+1]\subset [|v|\le K_1].
$$

Finally we check the distance hypothesis in Lemma~\ref{CL}. 
Let $t_0$ be such that $d(x(t),\pi\Ga)=d(x(t),\pi(\Ga(t_0)))$.
By~\eqref{posso1} and the definition of $\de_1$ in \eqref{dfde1} we can apply Lemma~\ref{CL} for $\L$ 
and  $K=K_1$ from~\eqref{dK1},
to $x$ and $\pi\Ga$ at $x(t)$ and $\pi(\Ga(t_0))$.
Also note that by~\eqref{posso3} we have that, as required in Lemma~\ref{CL},
$$
d\big([x(t),\dx(t)],\Ga(t_0)\big)\ge d\big([x(t),\dx(t)],\Ga\big) > C\, d\big(x(t),\pi\Ga\big)=C\, d\big(x(t),\pi\Ga(t_0)\big).
$$

Using $0<\e\le 1$ from Lemma~\ref{CL} we obtain $C^1$ curves
$w_1,\,w_2:[-\e,\e]\to M$ with $w_1(-\e)=x(t-\e)$, $w_1(\e)=\pi\Ga(t_0+\e)$,
$w_2(-\e)=\pi\Ga(t_0-\e)$, $w_2(\e)=x(t+\e)$ such that
$$
A_\L(w_1)+A_\L(w_2) 
< A_\L(\pi\Ga|_{[t_0-\e,t_0+\e]}) + A_\L(x|_{[t-\e,t+\e]})
- \eta_1\, d([x(t),\dx(t)],\Ga(t_0))^2.
$$

Since $\phi\ge 0$ and \eqref{clo} we have that
\begin{equation}\label{clf}
c(L+\phi)\le c(L)=0.
\end{equation}
Using 
\eqref{ALGA2}, $\phi|_{\pi\Ga}\equiv 0$ and that $\pi\Ga$ is a closed curve
we have that 
$$
A_\L(\pi\Ga) = A_{L+c(L+\phi)}(\pi\Ga) \le A_{L+c(L)}(\pi\Ga) <
\de^2\,\ga(\Ga)^2.
$$
We compute the action of the curve $w_1*\pi\Ga|_{[t_0+\e,t_0+T-\e]}*w_2$
which joins $x(t-\e)$ to $x(t+\e)$.
\begin{align*}
A_\L(&w_1)+A_\L(\pi\Ga|_{[t_0+\e,t_0+T-\e]})+A_\L(w_2) <
\\
&< A_\L(x|_{[t-\e,t+\e]})
+ A_\L(\pi\Ga|_{[t_0-\e,t_0+\e]}) + \A_\L(\pi\Ga|_{[t_0+\e,t_0+T-\e]})
- \eta_1\, d([x(t),\dx(t)],\Ga(t_0))^2
\\
&<  A_\L(x|_{[t-\e,t+\e]})
+ \de^2 \,\ga(\Ga)^2 - \eta_1\, d([x(t),\dx(t)],\Ga)^2
\\
&< A_\L(x|_{[t-\e,t+\e]}), \qquad \text{using~\eqref{posso3}.}
\end{align*}
This contradicts the assumption that $x$ is semi-static for $\L$.

\hfill$\triangle$

Since we can assume that $\cA(L)$
has no periodic orbits, if $\de$ is small enough 
\begin{equation}\label{T>1}
T:=\per(\Ga)> 1.
\end{equation}

Observe that $\Ga$ is also a periodic orbit for $L+\phi$.
Let $\mu_\Ga$ be the invariant probability supported on $\Ga$.
Using \eqref{minmeas},  \eqref{clo}, \eqref{ALGA2} we have that 
\begin{align}
c(L+\phi) &\ge -\int(L+\phi)\,d\mu_\Ga
=-\int L\;d\mu_\Ga
\notag
\\
&\ge - \tfrac 1T \,\de^2\,\ga(\Ga)^2. 
\label{cLp1}
\end{align}

\bigskip

We will prove that  any semi-static curve $x:]-\infty,0]\to M$ for $L+\phi$
has 
$\a$-limit$\{(x,\dx)\}$ $= \Ga$.
Since $\a$-limits of semi-static orbits are static (Ma\~n\'e \cite[Theorem V.(c)]{Ma7}),
this implies that $\Ga\subset\cA(L+\phi)$.
Thus finishing the proof of Proposition~\ref{Ppert}.

Since by~\eqref{gadeo1}, 
the number
 $\ogd$ is smaller than the flow expansivity constant of $\cN(L+\phi)$,
it is enough to prove that  the tangent $(x,\dx)$ of 
any semi-static curve $x:]-\infty,0]\to M$ 
spends only  a bounded time outside the $\tfrac 38\ogd$-neighbourhood of $\Ga$.

Let $x:]-\infty,0]\to M$ be a semi-static curve for $L+\phi$.
Let $\tt:=(x(0),\dx(0))$ and let $\psi_t=\vr_t^{L+\phi}$ be the lagrangian flow of $L+\phi$.
By~\eqref{dfde1} and~\eqref{dgad} we have that
\begin{equation}\label{dxpgd2}
d(x(t),\pi\Ga)\ge \de_1
\quad\then\quad
d(x(t),\pi\Ga)>\tfrac 14 \ogd.
\end{equation}

By \eqref{ecl21}-\eqref{ecl31} and \eqref{rho151} we have that
\begin{equation}\label{Drho1}
d(\psi_t(\tt),\Ga)> C\rho
\quad \&\quad 
d(x(t),\pi\Ga)<\de_1
 \quad \then \quad 
d(x(t),\pi\Ga) \ge \tfrac 1C \, d(\psi_t(\tt),\Ga).
\end{equation}
By~\eqref{rho14ga1} and~\eqref{dgad} we have that $\tfrac 14\ga_\de>C\rho$.
And then  from~\eqref{dxpgd2} and~\eqref{Drho1} we get
\begin{equation}\label{Crho}
d(\psi_t(\tt),\Ga)\ge\tfrac 14 \gad\quad\left(> \tfrac 14 C\,\ogd\right)
 \quad \then \quad 
d(x(t),\pi\Ga) 
> \tfrac 14\,\ogd.
\end{equation}

Also, from~\eqref{dxpgd2},~\eqref{Drho1}  and \eqref{rho14ga1} we have that
\begin{equation}\label{dpsgacrho}
d(\psi_t(\tt),\Ga)>C\rho \quad \then \quad
d(x(t),\pi\Ga) > \rho.
\end{equation}
Then by \eqref{dpsgacrho},  \eqref{defphi},  \eqref{cLp1},  \eqref{T>1} and \eqref{rho11}, 
 we have that 
\begin{equation}\label{phirho}
d(\psi_t(\tt),\Ga)>C\rho \quad \then \quad
\phi(x(t))+c(L+\phi) \ge 
 \tfrac 1{4}\,\e \rho^{2} -  \de^2 \,\ga_\de^2 =:a_0>0.
\end{equation}

For $\xi\in\La(\phi)$ consider the local invariant manifolds
\begin{align*}
W^s_\eta(\xi)&:=\{\,\zeta\in E_\L^{-1}\{c(\L)\}\;:\; \forall t\ge0 \quad
d(\psi_t(\zeta),\psi_t(\xi))\le \eta\,\},
\\
W^{ss}_\eta(\xi) &:=\{\, \zeta\in W^s_\eta(\xi)\;:\; \lim_{t\to+\infty}
d(\psi_t(\zeta),\psi_t(\xi))=0\,\},
\\
W^u_\eta(\xi)&:=\{\,\zeta\in E_{\L}^{-1}\{c(\L)\}\;:\; \forall t\le0 \quad
d(\psi_t(\zeta),\psi_t(\xi))\le \eta\,\},
\\
W^{uu}_\eta(\xi) &:=\{\, \zeta\in W^u_\eta(\xi)\;:\; \lim_{t\to-\infty}
d(\psi_t(\zeta),\psi_t(\xi))=0\,\}.
\end{align*}

Also consider the canonical coordinates as in~\ref{B4} on $\La(\phi)$, i.e.
{\sl there are $\eta_0,\,\eta>0$ such that if
 $\xi,\,\zeta\in\La(\phi)$
and $d(\xi,\zeta)<\eta_0$
then there is $v=v(\xi,\zeta)\in\re$, $|v|\le\eta$ such that
\begin{align}
\langle \xi, \zeta \rangle:=W^{ss}_\eta(\psi_v(\xi))\cap W^{uu}_\eta(\zeta)
\ne \emptyset.
\label{cacoN}
\end{align}
}
We use the canonical coordinates to parametrize the approaches of
$\psi_t(\tt)$ to $\Ga$ in the following way. By \eqref{gadeta}, $\gad<\eta_0$.
The local weak stable manifold of $\Ga$
$$
W^{s}_\eta(\Ga):=\textstyle\bigcup_{\xi\in\Ga}W^{s}_\eta(\xi)
=\bigcup_{\xi\in\Ga}W^{ss}_\eta(\xi)
$$
forms a cylinder homeomorphic to $\Ga(\re)\times ]0,1[^{\dim M-1}$.
When $d(\psi_t(\tt),\Ga(\re))<\gad$ the strong local unstable manifold
$W^{uu}_\eta(\psi_t(\tt))$ intersects this cylinder transversely  and 
defines a unique time parameter $v(t)$ (mod $T$) such that 
\begin{equation}\label{vttt}
W^{ss}_\eta(\Ga(v(t)))\cap W^{uu}_\eta(\psi_t(\tt))\ne 0.
\end{equation}
Since the family of strong invariant manifolds is invariant 
under each iterate $\psi_t$
we have that if $d(\psi_t(\tt),\Ga(\re))<\gad$ for all $t\in[a,b]$
then
$$
\forall s\in[0,b-a]\qquad v(a+s)= v(a)+s.
$$

 Let  $B$ be from Lemma~\ref{B4}.
 Write
$\tt=(x(0),\dx(0))$ and
define $S_k(\tt)$, $T_k(\tt)$ recursively by
\begin{align}
S_0(\tt)&:=0,
\label{defSTCS}\\
T_k(\tt)&:=\sup\,\big\{\,t< S_{k-1}(\tt)\;\big|\; d\big(\psi_t(\tt),\Ga(v(t))\big)
\le C(B+1)\rho\,\big\},
\notag
\\
C_k(\tt)&:=\sup \,\big\{\,t<T_k(\tt)\;\big|\; d(\psi_t(\tt),\Ga(\re))=\tfrac13\gad\,\big\},
\notag
\\
S_k(\tt)&:=\inf\big\{\,t>C_k(\tt)\;\big|\;d\big(\psi_t(\tt),\Ga(v(t))\big)\le C(B+1)\rho\,\big\}.
\notag
\end{align}

\begin{figure}[h]
\resizebox*{13cm}{3.5cm}{\includegraphics{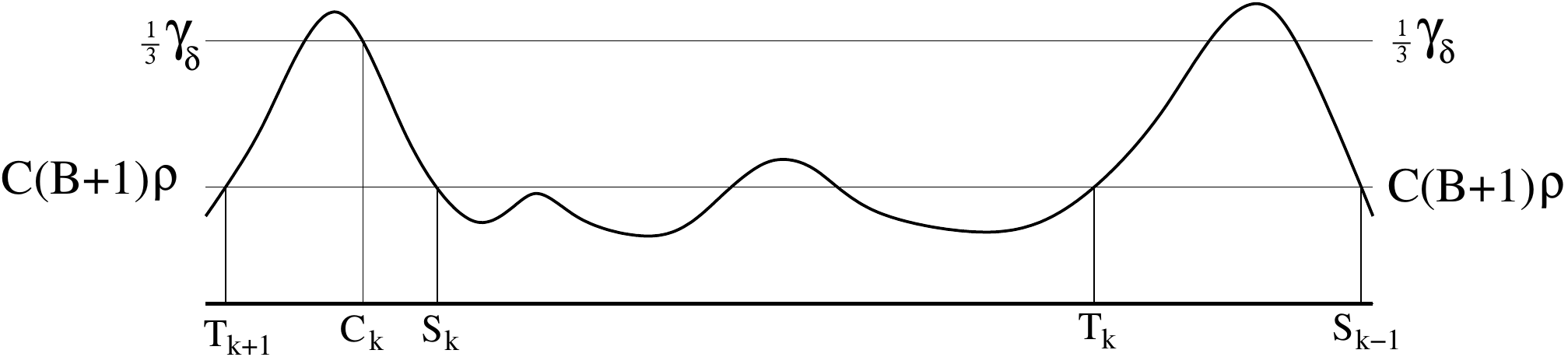}} 
\caption{This figure illustrates the distance of the orbit of $\tt$ to the
periodic orbit $\Ga$ and the choice of $S_k$, $T_k$ and $C_k$.
}\label{543escapes}
\end{figure}

\pagebreak

\begin{Claim}\label{stcs}\quad

\begin{enumerate}
\item\label{stcs1} If $S_{k-1}(\tt)>-\infty$ then $T_k(\tt)>-\infty$.
\item\label{stcs25} If $T_k(\tt)>-\infty$ then $T_{k+1}(\tt) \le C_k(\tt)$.
\item\label{stcs15} If $C_{k-1}(\tt)>-\infty$ then
$d\big[\psi_{T_k(\tt)}(\tt),\Ga(v(T_k(\tt)))\big]=C(B+1)\rho$.
\item\label{stcs2} If $C_k(\tt)>-\infty$ then $C_k(\tt)<S_k(\tt)\le T_k(\tt)$.
\item\label{stcs3} If the sequence $\{T_k\}$ is finite, then 
$\a\text{-limit}(x,\dx)=\Ga$.
\item\label{stcs6} If $t\in[S_k(\tt),T_k(\tt)]$ then $d(\psi_t (\tt),\Ga(\re))\le\tfrac 13\gad$.
\end{enumerate}

\end{Claim}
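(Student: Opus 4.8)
The plan is to argue by induction on $k$, at stage $k$ establishing items \eqref{stcs1}, \eqref{stcs15}, \eqref{stcs2}, \eqref{stcs25} in that order and then deducing \eqref{stcs6}; item \eqref{stcs3} is handled separately. At stage $k$ one may assume the statements of stage $k-1$, in particular that $C_{k-1}(\tt)$ is finite, for otherwise the recursion has already terminated and nothing is to be proved. Everything rests on the standing chain of inequalities
$$
C(B+1)\,\rho<\tfrac1{12}\,\gad<\tfrac13\,\gad<\gad<\min\{\eta_0,\,\epsilon_0\},
$$
read off from \eqref{dgad}, \eqref{rho14ga1}, \eqref{gadeo1} and \eqref{gadeta}, together with: continuity of $t\mapsto\psi_t(\tt)$; the facts that $v(t)$ is defined and continuous wherever $d(\psi_t(\tt),\Ga(\re))<\gad$ and that $v(t)=v(a)+(t-a)$ on any interval throughout which $d(\psi_\cdot(\tt),\Ga(\re))<\gad$; the estimate $d(\psi_t(\tt),\Ga(\re))\le d(\psi_t(\tt),\Ga(v(t)))\le B\,d(\psi_t(\tt),\Ga(\re))$ from Lemma~\ref{B4}; and the fact that $(x,\dx)$ lies in $\cN(L+\phi)\subset\La(\phi)$, so the canonical coordinates of \ref{B4} apply to every orbit point in play. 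Throughout, $\psi_t(\tt)=(x(t),\dx(t))$, so $d(\psi_t(\tt),\Ga(\re))=d\big([x(t),\dx(t)],\Ga\big)$.

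Items \eqref{stcs15}, \eqref{stcs2}, \eqref{stcs25}, \eqref{stcs6} are then purely topological. For \eqref{stcs15}: the set defining $T_k(\tt)$ is closed, so the defining inequality holds at $T_k(\tt)$; if it were strict, continuity would make it persist just to the right of $T_k(\tt)$, still below $S_{k-1}(\tt)$ (using $T_k(\tt)\le C_{k-1}(\tt)<S_{k-1}(\tt)$ from stage $k-1$, and $T_k(\tt)>-\infty$ from \eqref{stcs1}), contradicting the supremum. For \eqref{stcs2}: at $C_k(\tt)$ one has $d(\psi_{C_k(\tt)}(\tt),\Ga(v(C_k(\tt))))\ge\tfrac13\gad>C(B+1)\rho$, so the defining condition of $S_k(\tt)$ fails at and, by continuity, just to the right of $C_k(\tt)$, whence $C_k(\tt)<S_k(\tt)$; and $T_k(\tt)$ satisfies that condition by \eqref{stcs15} and lies strictly above $C_k(\tt)$ (equality would force $d(\psi_{T_k(\tt)}(\tt),\Ga(\re))=\tfrac13\gad$, against \eqref{stcs15}), so $S_k(\tt)\le T_k(\tt)$. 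Item \eqref{stcs25} follows because on $[C_k(\tt),S_k(\tt))$ the orbit is never $C(B+1)\rho$-close to $\Ga(v(\cdot))$, so the supremum $T_{k+1}(\tt)$ is $\le C_k(\tt)$. For \eqref{stcs6}: if $d(\psi_{t_*}(\tt),\Ga(\re))>\tfrac13\gad$ at some $t_*\in(S_k(\tt),T_k(\tt))$, then since that distance is $\le C(B+1)\rho<\tfrac13\gad$ at $T_k(\tt)$, the intermediate value theorem produces $t'\in(t_*,T_k(\tt))$ with $d(\psi_{t'}(\tt),\Ga(\re))=\tfrac13\gad$, so $C_k(\tt)\ge t'>S_k(\tt)$, contradicting \eqref{stcs2}.

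The substantive item, and the one I expect to be the main obstacle, is \eqref{stcs1}. Suppose $S_{k-1}(\tt)>-\infty$ but $T_k(\tt)=-\infty$; then $\psi_t(\tt)$ is not $C(B+1)\rho$-close to $\Ga(v(t))$ for any $t<S_{k-1}(\tt)$, which by Lemma~\ref{B4} gives $d([x(t),\dx(t)],\Ga)>C\rho$ for all such $t$ (if $v(t)$ is defined, $B\,d([x(t),\dx(t)],\Ga)\ge d(\psi_t(\tt),\Ga(v(t)))>C(B+1)\rho$; if not, $d([x(t),\dx(t)],\Ga)\ge\gad>C\rho$). For $t\le\min\{S_{k-1}(\tt),-1\}$ the trichotomy of Claim~\eqref{ecl21}--\eqref{ecl31} then forces $d(x(t),\pi\Ga)>\rho$: the alternative $d([x(t),\dx(t)],\Ga)\le\tfrac{\de\,\gad}{\sqrt{\eta_1}}$ is excluded by \eqref{rho151}; the alternative $d([x(t),\dx(t)],\Ga)\le C\,d(x(t),\pi\Ga)$ yields $d(x(t),\pi\Ga)\ge\tfrac1C\,d([x(t),\dx(t)],\Ga)>\rho$; and alternative \eqref{ecl31} gives $d(x(t),\pi\Ga)\ge\de_1>\rho$. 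Hence by \eqref{defphi}, $\phi(x(t))\ge\tfrac14\e\rho^2$, so by \eqref{cLp1}, \eqref{T>1} and \eqref{rho11}, $\phi(x(t))+c(L+\phi)\ge\tfrac14\e\rho^2-\de^2\gad^2=a_0>0$; since $L-du\ge0$ by \eqref{Ldu} and \eqref{clo}, the modified Lagrangian $\L=L+\phi+c(L+\phi)-du$ satisfies $\L(x(t),\dx(t))\ge a_0$ on $(-\infty,\min\{S_{k-1}(\tt),-1\}]$, so $A_\L(x|_{[-N,S_{k-1}(\tt)]})$ is bounded below by $a_0(N-1)$ minus a constant and tends to $+\infty$. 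On the other hand $\L$ has the same Euler--Lagrange flow and the same semi-static curves as $L+\phi$, so $A_\L(x|_{[-N,S_{k-1}(\tt)]})=\Phi^{L+\phi}_{c(L+\phi)}(x(-N),x(S_{k-1}(\tt)))-u(x(S_{k-1}(\tt)))+u(x(-N))$ stays bounded by compactness of $M$ — a contradiction. The delicate point is precisely this passage from ``not $C(B+1)\rho$-close in canonical coordinates'' to the pointwise lower bound $\phi(x(t))\ge\tfrac14\e\rho^2$ through Claim~\eqref{ecl21}--\eqref{ecl31}; once it is in hand the action estimate is routine.

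Finally, for item \eqref{stcs3}: if $\{T_k(\tt)\}$ is finite, then by \eqref{stcs1} — which rules out $T_1(\tt)=-\infty$ since $S_0(\tt)=0$, and, combined with \eqref{stcs2} and \eqref{stcs25}, propagates finiteness along the recursion — the sequence can terminate only through some $C_N(\tt)=-\infty$; by the definition of $C_N(\tt)$ together with \eqref{stcs15} (so $d(\psi_{T_N(\tt)}(\tt),\Ga(\re))\le C(B+1)\rho<\tfrac13\gad$) and continuity, this forces $d(\psi_t(\tt),\Ga(\re))<\tfrac13\gad$ for all $t\le T_N(\tt)$. Thus the $\a$-limit of $(x,\dx)$ is a non-empty compact invariant subset of the $\tfrac13\gad$-neighbourhood of $\Ga$ inside the hyperbolic set $\La(\phi)$; since $\gad<\epsilon_0$ by \eqref{gadeo1} and the tube about $\pi\Ga$ has no self-intersections by Claim~\eqref{cl1}, any orbit that stays that close to $\Ga$ stays $\epsilon_0$-close to a single orbit of $\Ga$, so by flow expansivity (\ref{dfe}, \ref{rue}) it lies on $\Ga$; a hyperbolic periodic orbit being locally maximal, the $\a$-limit equals $\Ga$. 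This is exactly what is needed to conclude, via Ma\~n\'e's theorem that $\a$-limits of semi-static orbits are static, that $\Ga\subset\cA(L+\phi)$.
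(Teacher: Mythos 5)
Your argument follows essentially the same architecture as the paper's (contradiction via unbounded action for~\eqref{stcs1}, order/continuity bookkeeping for~\eqref{stcs15},~\eqref{stcs2},~\eqref{stcs25},~\eqref{stcs6}, closeness-to-$\Ga$ implying $\a$-limit $=\Ga$ for~\eqref{stcs3}), and the substantive steps are sound. Two points worth flagging where you deviate from the paper or leave a gap.

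First, for item~\eqref{stcs25} the paper separates out the case $C_k(\tt)=-\infty$ and invokes Proposition~\ref{B71} to establish $\lim_{t\to-\infty}g(t)=0$, hence $S_k(\tt)=-\infty$, hence $T_{k+1}(\tt)=-\infty$. Your unified phrase ``on $[C_k(\tt),S_k(\tt))$ the orbit is never $C(B+1)\rho$-close'' does cover this case too (since the definition of $S_k$ as an infimum already forces $g>C(B+1)\rho$ on $(-\infty,S_k)$ when $C_k=-\infty$, whatever the value of $S_k$ turns out to be), so there is no error, but you should say so explicitly rather than implicitly treat $C_k$ as a real number.

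Second, and more importantly, your proof of~\eqref{stcs3} replaces the paper's direct application of Proposition~\ref{B71} (which yields $d(\psi_t(\tt),\Ga(t+v))\le D\be_0\ee^{-\la(T_\ell-t)}$, so $\psi_{-t}(\tt)\to\Ga$ and the $\a$-limit is contained in $\Ga$) by a qualitative expansivity argument applied to points of the $\a$-limit set. Here the invocation of Claim~\eqref{cl1} does not fit: that claim gives an embedded tube of radius $\tfrac 38\ogd=\tfrac{\gad}{8C(B+1)}$ around $\pi\Ga$, while the $\a$-limit lives only in the $\tfrac13\gad$-neighbourhood of $\Ga$ in $TM$, which projects to a strictly larger tube; so ``stays $\epsilon_0$-close to a single orbit of $\Ga$'' is not justified by the tube you cite. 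You would still need to produce the continuous reparametrization required in Definition~\ref{dfe}, which is exactly what Proposition~\ref{B71} delivers for free. Finally, a small slip: from Lemma~\ref{B4} the correct comparison is $d(\psi_t(\tt),\Ga(v(t)))\le(1+B)\,d(\psi_t(\tt),\Ga(\re))$ (via~\eqref{Bdxy} and the triangle inequality), not with constant $B$; the final bound $d([x(t),\dx(t)],\Ga)>C\rho$ survives because $\frac{C(B+1)\rho}{1+B}=C\rho$, so nothing breaks, but the intermediate inequality as you wrote it is not the one Lemma~\ref{B4} gives.
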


\noindent{\it Proof:}

\noindent\eqref{stcs1}. 
 Suppose by contradiction that $S_{k-1}(\tt)>-\infty$ 
 but $T_k(\tt)=-\infty$.
Let $\Phi^L_k$ be the action potential \eqref{actionpotential} for $L$. 
Since $\Phi^L_{c(L)}$ is Lipschitz, it is bounded 
on $M\times M$.
\begin{align}
\int_{-t}^{S_{k-1}(\tt)}L(x,\dx)
=
\int_{-t}^{S_{k-1}(\tt)}\big\{c(L)+L(x,\dx)\big\} 
&\ge \Phi^L_{c(L)}\big(x(-t),x(S_{k-1}(\tt))\big)
\notag
\\
&\ge\inf_{y,z\in M}\Phi^L_{c(L)}(y,z)=:b_0 >-\infty.
\label{intb0}
\end{align}
Recall that $\eta$ is from the canonical coordinates~\ref{caco} for $\La(\phi)$ 
as in~\eqref{cacoN}
and satisfies~\eqref{gadeta}.
Since $T_k(\tt)=-\infty$ we have that for all $t<S_{k-1}(\tt)$ either
\begin{equation}\label{caseta}
d(\psi_t(\tt),\Ga(\re))>\eta>\gad
\qquad \text{ or }
\end{equation}  
\begin{equation}\label{caseta1}
d(\psi_t(\tt),\Ga(\re))\le \eta\quad \text{ but }\quad
d(\psi_t(\tt),\Ga(v(t)))> C(B+1)\rho.
\end{equation}
In the case~\eqref{caseta1}
let $s(t)$ be such that $d(\psi_t(\tt),\Ga(s(t)))=d(\psi_t(\tt),\Ga(\re))\le\eta$.
We have that
\begin{align}
\langle \Ga(s(t)),\psi_t(\tt)\rangle
&=W^{s}_{\eta}(\Ga(s(t)))\cap W^{uu}_\eta(\psi_t(\tt))
\notag
\\
&=W^s_{\eta}(\Ga(v(t)))\cap W^{uu}_\eta(\psi_t(\tt))
=\langle \Ga(v(t)),\psi_t(\tt)\rangle
\notag
\\
&=W^{ss}_\eta(\Ga(v(t)))\cap W^{uu}_\eta(\psi_t(\tt)).
\label{Gavt}
\end{align}
 We apply Lemma~\ref{B4} with $x:=\Ga(s(t))$ and $y:=\psi_t(\tt)$.
 Using~\eqref{Bdxy} we have that
 \begin{equation}\label{dyvrv}
 d(y,\psi_v(x))\le d(y,x)+ d(x,\psi_v(x))
\le (1+B)\,d(y,x).
\end{equation}
Observe that \eqref{Gavt} implies that $\psi_v(x)=\Ga(v(t))$.
Replacing $x$ and $y$ in~\eqref{dyvrv} 
and using~\eqref{caseta1} we have that
\begin{align}
d(\psi_t(\tt),\Ga(\re))
&= d(\psi_t(\tt),\Ga(s(t)))
\ge \tfrac 1{1+B} \;d(\psi_t(\tt),\Ga(v(t)))
\notag
\\
&> C \rho.
\label{caseta3}
\end{align}
Observe that by~\eqref{rho14ga1} and~\eqref{dgad}, 
in case~\eqref{caseta} inequality~\eqref{caseta3} 
also holds. Therefore
\begin{equation}\label{tsk}
\forall t<S_{k-1}(\tt) \qquad
d(\psi_t(\tt),\Ga(\re)) > C \rho.
\end{equation}

Since $x$ is semi-static for $L+\phi$ we have for all $-t< S_{k-1}(\tt)$ that 
\begin{align}
\infty>
\sup_{y,z\in M}\Phi^{L+\phi}_{c(L+\phi)}(y,z)
&\ge
\Phi_{c(L+\phi)}^{L+\phi}\big(x(-t),x(S_{k-1}(\tt))\big)
\notag\\
&=\int_{-t}^{S_{k-1}(\tt)} \big[ L(x,\dx)+\phi(x)+c(L+\phi)\big]
\notag\\
&=\int_{-t}^{S_{k-1}(\tt)} L(x,\dx) 
+
\int_{-t}^{S_{k-1}(\tt)} 
\big[ \phi(x)+c(L+\phi) \big]
\notag\\
&\ge b_0 + a_0 \big(t+S_{k-1}(\tt)\big)
\qquad\text{ by \eqref{intb0} and \eqref{tsk}, \eqref{phirho}.}
\label{b+at}
\end{align}
By~\eqref{phirho} we have that $a_0>0$.
Letting $t\to+\infty$, inequality~\eqref{b+at} gives a contradiction.

\medskip
\noindent
\eqref{stcs25}.  
Let 
\begin{equation}\label{fgtv}
f(t):=d(\psi_t(\tt),\Ga(\re))
\qquad\text{  and }\qquad
g(t):=d(\psi_t(\tt),\Ga(v(t))),
\end{equation}
when $g$ is defined (in particular by~\eqref{gadeta} when $f(t)<\gad$).
Then $f(t)\le g(t)$.

Suppose first that $C_k(\tt)=-\infty$. 
Then $f(t)\ne \tfrac 13\gad$ for all $t<T_k(\tt)$.
By hypothesis  $T_k(\tt)>-\infty$, then 
$f(T_k(\tt))\le g(T_k(\tt))\le C(B+1)\rho$.
By~\eqref{rho14ga1},  $C(B+1)\rho<\tfrac 13\ga_\de$, and hence
 $f(t)<\tfrac 13\gad$ for all $t<T_k(\tt)$.
By~\eqref{gadbe0} and  Proposition~\ref{B71} with $L\to\infty$ we have that 
$\lim_{t\to-\infty}g(t)=0$.
Then $S_k(\tt)=-\infty$ and also $T_{k+1}(\tt)=-\infty$.

Now suppose that $C_k(\tt)>-\infty$.
By the definition of $S_k(\tt)$ 
for all $t\in]C_k(\tt),S_k(\tt)[$
we have that 
$g(t)> C(B+1)\rho$.
 This implies that $T_{k+1}(\tt)\le C_k(\tt)$.

\medskip
\noindent\eqref{stcs15}.
Let $f,\,g$ be as in~\eqref{fgtv}.
By the hypothesis $C_{k-1}(\tt)>-\infty$ and by the  definition of $C_{k-1}(\tt)$,
 $C_{k-1}(\tt)\le T_{k-1}(\tt)$. Then $f(C_{k-1}(\tt))=\tfrac 13\gad$.
By~\eqref{rho14ga1}, $C(B+1)\rho< \tfrac 13\gad$ and then
\begin{equation}\label{gcgfg}
C(B+1)\rho<\tfrac 13\gad=f(C_{k-1}(\tt))\le g(C_{k-1}(\tt)).
\end{equation}
By the definition of $S_{k-1}(\tt)$ we have that
$C_{k-1}(\tt)\le S_{k-1}(\tt)$.
But by~\eqref{gcgfg}, $g(C_{k-1}(\tt))\ge\tfrac 13\gad$,
and by the definition of  $S_{k-1}(\tt)$, if $S_{k-1}(\tt)<+\infty$
then $g(S_{k-1}(\tt))\le C(B+1)\rho<\tfrac 13\gad$.
Therefore $C_{k-1}(\tt)\ne S_{k-1}(\tt)$ and then 
\begin{equation}\label{cksk}
C_{k-1}(\tt)<S_{k-1}(\tt)\le+\infty.
\end{equation}
By~\eqref{gcgfg}
and the definition of $S_{k-1}(\tt)$ we have that 
$$
\forall t\in  ]C_{k-1}(\tt),S_{k-1}(\tt)[
\qquad
g(t)> C(B+1)\rho.
$$
This implies that $T_k(\tt)< C_{k-1}(\tt)$,
with strict inequality by~\eqref{gcgfg}.
By~\eqref{cksk} and  item~\eqref{stcs1} we have that $C_{k-1}(\tt)>-\infty$ 
implies that $T_k(\tt)>-\infty$.
Therefore
\begin{equation}\label{tkck1}
-\infty<T_k(\tt)<C_{k-1}(\tt)<S_{k-1}(\tt).
\end{equation}
The definition of $T_k(\tt)$ and the continuity of $g(t)$
on its domain
 imply that 
 \begin{equation}\label{gcb1}
 g(T_k(\tt))\le C(B+1)\rho.
 \end{equation}
 The domain of definition and continuity of $g$ contains
 $f^{-1}(]0,\gad[)\supset g^{-1}(]0,\gad[)$.
By the intermediate value theorem for $g$ on connected components 
of $[g\le\gad]$
and~\eqref{tkck1}, \eqref{gcb1}, \eqref{gcgfg}, the image 
$g([T_k(\tt),C_{k-1}(\tt)])$, and hence also 
$g(] -\infty,S_{k-1}(\tt)[)$,
contain the closed interval 
$\big[C(B+1)\rho,\tfrac 13\gad\big]$.
Therefore, by the definition of $T_k(\tt)$, we have that 
$g(T_k(\tt))=C(B+1)\rho$.

\medskip
\noindent\eqref{stcs2}.
Let $f$, $g$ be from~\eqref{fgtv}.
 If $C_k(\tt)>-\infty$ then 
 by the definition of $C_k(\tt)$,
 \begin{equation}\label{ckletk}
 C_k(\tt)\le T_k(\tt).
 \end{equation}
 Therefore $T_k(\tt)>-\infty$.
 Then the definition of $T_k(\tt)$ implies that 
 \begin{equation}\label{gtkcbr}
 g(T_k(\tt))\le C(B+1)\rho.
 \end{equation}
 Since $f(t)$ is continuous, 
 \begin{equation}\label{fck3g}
 f(C_k(\tt))=\tfrac 13\gad.
 \end{equation}
By~\eqref{gtkcbr}, ~\eqref{rho14ga1} and~\eqref{fck3g} we have that 
\begin{equation}\label{54c43gf}
g(T_k(\tt))\le C(B+1)\rho<\tfrac 14\ga_\de<\tfrac13\gad=f(C_k(\tt))\le g(C_k(\tt)).
\end{equation}
This implies that $C_k(\tt)\ne T_k(\tt)$.
This together with \eqref{ckletk} imply that
\begin{equation}\label{ck<tk}
C_k(\tt)<T_k(\tt).
\end{equation}
By~\eqref{gtkcbr} and~\eqref{ck<tk} 
the value $S_k(\tt)$ is an infimum of a set which contains $T_{k}(\tt)$,
therefore 
\begin{equation}\label{skztk}
S_k(\tt)\le T_k(\tt).
\end{equation}
This proves the second inequality in item~\eqref{stcs2}.

The first of the following inequalities follows from the definition of $S_k(\tt)$.
The second inequality is~\eqref{skztk}. The third inequality follows from
the definition of $T_k(\tt)$.
\begin{equation}\label{csts}
C_k(\tt)\le S_k(\tt)\le T_k(\tt)\le S_{k-1}(\tt).
\end{equation}
We get that
$$
-\infty <C_k(\tt)\le S_k(\tt)\le S_{k-1}(\tt)\le\cdots \le S_0(\tt):=0<+\infty.
$$
From the definition of $S_k(\tt)$ and $S_k(\tt)<+\infty$, and then~\eqref{fck3g},
 we have that
$$
g(S_k(\tt))\le C(B+1)\rho<\tfrac 13\gad=f(C_k(\tt))\le g(C_k(\tt)).
$$
In particular $C_k(\tt)\ne S_k(\tt)$.
Thus from~\eqref{csts}, $C_k(\tt)<S_k(\tt)$.

\medskip
\noindent
\eqref{stcs3}. If the sequence $\{T_k\}$ is finite, there is $\ell\in\na$ such that 
$T_\ell>-\infty$ and $T_{\ell+1}=-\infty$. Let $f,\,g$ be from~\eqref{fgtv}.
By item~\eqref{stcs25} we have that 
$-\infty< T_\ell(\tt)\le C_{\ell-1}(\tt)$.
Then we can apply item~\eqref{stcs15} and use~\eqref{rho14ga1}
to obtain
\begin{equation}\label{ftltt}
f(T_\ell(\tt))\le g(T_\ell(\tt))=C(B+1)\rho<\tfrac 13\gad.
\end{equation}
Since $T_{\ell+1}(\tt)=-\infty$, by item~\eqref{stcs1}, $S_{\ell}(\tt)=-\infty$
and by item~\eqref{stcs2}, $C_{\ell}(\tt)=-\infty$. 
Since $C_{\ell}(\tt)=-\infty$  
we have that $f(t)\ne \tfrac 13\gad$ for all $t<T_\ell(\tt)$.
But by~\eqref{ftltt}, $f(T_\ell(\tt))<\tfrac 13\gad$.
Since $f(t)$ is continuous, using~\eqref{gadbe0}  we get that
$$
f(t)<\tfrac 13\gad<\be_0 \qquad\text{ for all } t<T_\ell(\tt). 
$$
This implies that there is a continuous function $s:]-\infty,T_\ell(\tt)]\to\re$ such that
$$
\forall t\le T_k(\tt) \qquad d\big(\psi_t(\tt),\Ga(s(t))\big)\le \be_0.
$$
By Proposition~\ref{B71} there is $v\in\re$ and $\la>0$ such that 
$$
\forall t\le T_\ell(\tt)\qquad 
d(\psi_t(\tt),\Ga(t+v))\le D\,\be_0 \,\ee^{-\la (T_\ell(\tt)-t)}.
$$
This implies that $\lim\limits_{t\to +\infty}d(\psi_{-t}(\tt),\Ga)=0$ and that 
$\a\text{-limit}(\tt)=\Ga(\re)$.

\noindent\eqref{stcs6}.
By item~\eqref{stcs25}, $C_{k-1}(\tt)\ge T_k(\tt)>-\infty$.
By item~\eqref{stcs15} we have that  $f(T_k(\tt))\le g(T_k(\tt))=C(B+1)\rho<\tfrac 13\gad$.
By the definition of $C_k(\tt)$ we have that $\forall t\in]C_k(\tt),T_k(\tt)]$ $f(t)\ne \tfrac 13\gad$.
Then by the continuity of $f(t)$, $\forall t\in]C_k(\tt),T_k(\tt)]$ $f(t)< \tfrac 13\gad$.
Now it is enough to see that by item~\eqref{stcs2}, $[S_k(\tt),T_k(\tt)]\subset ]C_k(\tt),T_k(\tt)]$.

 \hfill$\triangle$

 Let
$$
B_k(\tt):=\sup\big\{\;t<C_k(\tt)\;\big|\; d(\psi_t(\theta),\Ga(\re)) \le \tfrac14\gad \;\big\}.
$$ 
\begin{Claim}\label{cBkCk}
$$
[B_k(\tt),C_k(\tt)]\subset [T_{k+1}(\tt),S_k(\tt)].
$$
\end{Claim}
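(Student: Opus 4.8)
\medskip\noindent{\it Plan of proof:}\quad
Since $[B_k(\tt),C_k(\tt)]$ and $[T_{k+1}(\tt),S_k(\tt)]$ are intervals of $\re$, it is enough to establish the two endpoint inequalities $T_{k+1}(\tt)\le B_k(\tt)$ and $C_k(\tt)\le S_k(\tt)$, together with the tautological bound $B_k(\tt)\le C_k(\tt)$, which holds because $B_k(\tt)$ is a supremum over a subset of $]-\infty,C_k(\tt)[$. If $C_k(\tt)=-\infty$ there is nothing to prove, so I would assume $C_k(\tt)>-\infty$; then the set defining $C_k(\tt)$ is a non-empty subset of $]-\infty,T_k(\tt)[$, hence $T_k(\tt)>-\infty$, and the chain $C_k(\tt)\le T_k(\tt)\le S_{k-1}(\tt)\le\cdots\le S_0(\tt)=0$ shows all the times in play are finite. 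The inequality $C_k(\tt)\le S_k(\tt)$ is then immediate from item~\eqref{stcs2} of Claim~\ref{stcs} (applicable since $C_k(\tt)>-\infty$), which gives $C_k(\tt)<S_k(\tt)\le T_k(\tt)$.

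The substance of the claim is the inequality $T_{k+1}(\tt)\le B_k(\tt)$. Writing $f(t)=d(\psi_t(\tt),\Ga(\re))$ and $g(t)=d(\psi_t(\tt),\Ga(v(t)))$ as in~\eqref{fgtv}, so that $f\le g$ wherever $g$ is defined, I would argue as follows. Since $T_k(\tt)>-\infty$, item~\eqref{stcs25} gives $T_{k+1}(\tt)\le C_k(\tt)$, and item~\eqref{stcs15} applied with the index shifted by one (legitimate because $C_k(\tt)>-\infty$) shows $T_{k+1}(\tt)$ is finite with $g(T_{k+1}(\tt))=C(B+1)\rho$. Therefore $f(T_{k+1}(\tt))\le g(T_{k+1}(\tt))=C(B+1)\rho$, and by~\eqref{rho14ga1} and~\eqref{dgad} one has $C(B+1)\rho<C(B+1)\cdot\tfrac14\ogd=\tfrac1{12}\gad<\tfrac14\gad$. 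Since $f(C_k(\tt))=\tfrac13\gad$ by~\eqref{fck3g}, we get $f(T_{k+1}(\tt))<f(C_k(\tt))$, which upgrades $T_{k+1}(\tt)\le C_k(\tt)$ to the strict inequality $T_{k+1}(\tt)<C_k(\tt)$. Hence $T_{k+1}(\tt)$ lies in the set $\{\,t<C_k(\tt):f(t)\le\tfrac14\gad\,\}$ over which $B_k(\tt)$ is the supremum, so $T_{k+1}(\tt)\le B_k(\tt)$. Combining this with $B_k(\tt)\le C_k(\tt)\le S_k(\tt)$ yields $[B_k(\tt),C_k(\tt)]\subset[T_{k+1}(\tt),S_k(\tt)]$.

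I do not expect any genuine analytic obstacle; the only care needed is the bookkeeping of the hypothesis $C_k(\tt)>-\infty$ at each appeal to Claim~\ref{stcs} (items~\eqref{stcs2},~\eqref{stcs25},~\eqref{stcs15} and equation~\eqref{fck3g}), and the upgrade of item~\eqref{stcs25}'s bound $T_{k+1}(\tt)\le C_k(\tt)$ to a strict inequality, which is forced by the comparison $C(B+1)\rho<\tfrac14\gad<\tfrac13\gad=f(C_k(\tt))$ and is exactly what places $T_{k+1}(\tt)$ in the half-open defining set of $B_k(\tt)$.
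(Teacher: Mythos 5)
Your proof is correct, but it takes a genuinely different route from the paper's. The paper argues on the whole interval: by the definition of $B_k(\tt)$ one has $f>\tfrac14\gad>C(B+1)\rho$ on $]B_k,C_k[$, by the definition of $S_k(\tt)$ one has $g>C(B+1)\rho$ on $]C_k,S_k[$, and continuity of $f$ gives $g(C_k)\ge f(C_k)=\tfrac13\gad>C(B+1)\rho$; joining these yields $g|_{]B_k,S_k[}>C(B+1)\rho$, whence the supremum defining $T_{k+1}(\tt)$ cannot lie above $B_k(\tt)$. You instead argue pointwise at $T_{k+1}(\tt)$: you invoke items~\eqref{stcs25} and \eqref{stcs15} of Claim~\ref{stcs} to obtain $T_{k+1}(\tt)\le C_k(\tt)$ and $g(T_{k+1}(\tt))=C(B+1)\rho$, then compare with $f(C_k(\tt))=\tfrac13\gad$ to upgrade to $T_{k+1}(\tt)<C_k(\tt)$ and conclude that $T_{k+1}(\tt)$ itself lies in the set whose supremum is $B_k(\tt)$. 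Both are short and valid; the paper's is self-contained from the definitions of $B_k$, $C_k$, $S_k$, while yours reuses the machinery of Claim~\ref{stcs}, which is economical and makes explicit that the return time $T_{k+1}(\tt)$ hits the level $C(B+1)\rho$ exactly. One small remark: \eqref{fck3g} is an equation inside the proof of item~\eqref{stcs2} rather than a freestanding statement; the fact $f(C_k(\tt))=\tfrac13\gad$ needed there follows directly from the continuity of $f$ and the definition of $C_k(\tt)$, so it is cleaner to cite that directly than the internal tag.
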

\noindent{\it Proof:} \quad

Let $f,\,g$ be as in~\eqref{fgtv}.
By the definition of $S_k(\tt)$ we have that $S_k(\tt)\ge C_k(\tt)$.
By the definition of $B_k(\tt)$ and~\eqref{rho14ga1}, we have that
\begin{equation}\label{gbkck}
g|_{]B_k,C_k[}\ge  f|_{]B_k,C_k[} > \tfrac 14 \gad > C(B+1)\rho.
\end{equation}
By the definition of $S_k(\tt)$ we have that 
\begin{equation}\label{gcksk}
g|_{]C_k,S_k[}> C(B+1)\rho.
\end{equation}
By the definition of $C_k(\tt)$ and the continuity of $f(t)$
we have that 
\begin{equation}\label{gck}
g(C_k(\tt))\ge f(C_k(\tt))=\tfrac 13\gad > C(B+1)\rho.
\end{equation}
Joining~\eqref{gbkck}, \eqref{gcksk} and \eqref{gck} we get that
$$
g|_{]B_k,S_k[}>C(B+1)\rho.
$$
By the definition of $T_{k+1}(\tt)$ this implies that $T_{k+1}(\tt)\le B_k(\tt)$.

\hfill$\triangle$

If $t\in[B_k(\tt),C_k(\tt)]$, by the definition of $B_k(\tt)$ we have that
$$
d(\psi_t(\tt),\Ga)\ge \tfrac 14 \gad .
$$
Then by \eqref{Crho},
\begin{equation}\label{tbkck}
t\in[B_k(\tt),C_k(\tt)] \quad\then\quad d(x(t),\pi\Ga) > \tfrac 14 \ogd.
\end{equation}
By the definition of $T_{k+1}(\tt)$ we have that   
\begin{equation}\label{dvgcb}
\forall t\in]T_{k+1}(\tt),S_k(\tt)[\qquad \text{either}\qquad d\big(\psi_t(\tt),\Ga(v(t))\big)> C(B+1)\rho
\end{equation}
or $d(\psi_t(\tt),\Ga(\re))>\eta>C\rho$ 
(when $v(t)$ does not exist).
Here $\eta>C\rho$ follows from \eqref{rho14ga1}, \eqref{dgad}, \eqref{gadeta}.
The arguments in~\eqref{dyvrv}-\eqref{caseta3}
apply in the case~\eqref{dvgcb} to obtain
\begin{equation}\label{tk1sk}
t\in]T_{k+1}(\tt),S_k(\tt)[\quad\then\quad
d(\psi_t(\tt),\Ga)> C\rho.
\end{equation}
The continuity of $f$ and the definition of 
$B_k$ and $C_k$ give 
\begin{equation}\label{fbkck}
f(B_k) \le \tfrac 14\,\gad,\qquad f(C_k) =\tfrac 13\gad.
\end{equation}
From Claim~\ref{cBkCk}, \{\eqref{tbkck}, \eqref{defphi}\}, \eqref{cLp1},
 \{\eqref{gatima}, \eqref{fbkck}, \eqref{tima}\},
 \{\eqref{rho3}, \eqref{T>1}\} and 
 \{\eqref{tk1sk}, \eqref{phirho}\}, we have that
\begin{align}
\int_{T_{k+1}(\tt)}^{S_k(\tt)}\Big(\phi +c(L+\phi)\Big)
&\ge 
\int_{B_k(\tt)}^{C_k(\tt)}\Big( \tfrac 1{32}\, \e\, (\ogd)^{2} - \tfrac 1T \,\de^2\,\gad^2\Big)
+ \int_{[T_{k+1},S_k]\setminus[B_k,C_k]} \Big( \phi +c(L+\phi) \Big)
\notag\\
&\ge\big( \tfrac 1{32}\, \e\, (\ogd)^{2} - \tfrac 1T \,\de^2\,\gad^2\big)\, \a + 0.
\label{itk1sk}
\end{align}

Recall from \eqref{defLL} that
\begin{equation*}
\L:=L+\phi +c(L+\phi) -du,
\end{equation*}
where $u$ is from~\eqref{Ldu}.
Observe that the lagrangian flow for $\L$ is the same 
as the lagrangian flow $\psi_t$ for $L+\phi$.
Also $\cN(\L)=\cN(L+\phi)$ and $\cA(\L)=\cA(L+\phi)$.
Using \eqref{Ldu} and~\eqref{itk1sk},
\begin{align}
\int_{T_{k+1}(\tt)}^{S_k(\tt)} \L(\psi_t(\tt))\,dt
&= \int_{T_{k+1}(\tt)}^{S_k(\tt)}( L-du )
+\int_{T_{k+1}(\tt)}^{S_k(\tt)} \Big(\phi +c(L+\phi)\Big)
\notag
\\
&\ge 0+ \left(  \tfrac 1{32}\, \e\, (\ogd)^{2} -\tfrac 1T \,\de^2\,\gad^2\right) \a.
\label{farp}
\end{align}

\noindent
{\it Case 1:}
{\it Suppose that $T_k(\tt)-S_k(\tt)>T+2$.}

Let $m_k\in\na$ be such that
$$
S_k(\tt)+m_k T\le T_k(\tt)-1< S_k(\tt)+(m_k+1) T.
$$
Then $m_k\ge 1$.
Let $R_k(\tt):= S_k(\tt)+m_k T$.
Then $1\le T_k(\tt)-R_k(\tt) < T+1$.
By Claim~\ref{stcs}.\eqref{stcs6},
 $\Ga$ is $\tfrac\gad 3$-shadowed by $\psi_{[S_k,T_k]}(\tt)$. 
 Therefore by  inequality~\eqref{dextxy}  in Proposition~\ref{B71} there is $v\in\re$
  such that $ \forall t\in[S_k,T_k]$
 \begin{equation}\label{sktkd}
 d(\psi_t(\tt),\Ga(t+v))\le D\, \ee^{-\la\min\{t-S_k, T_k-t\}}
 [d(\psi_{S_k}(\tt),\Ga(S_k+v))
 +d(\psi_{T_k}(\tt),\Ga(T_k+v))].
 \end{equation}
 Also the choice of $v$ in Proposition~\ref{B71} is the same as in~\eqref{vttt}
 so that 
 \begin{equation}\label{tvvt}
 t+v = v(t) \qquad \forall t\in[S_k(\tt),T_k(\tt)].
 \end{equation}

\begin{figure}[h]
\resizebox*{8cm}{6cm}{\includegraphics{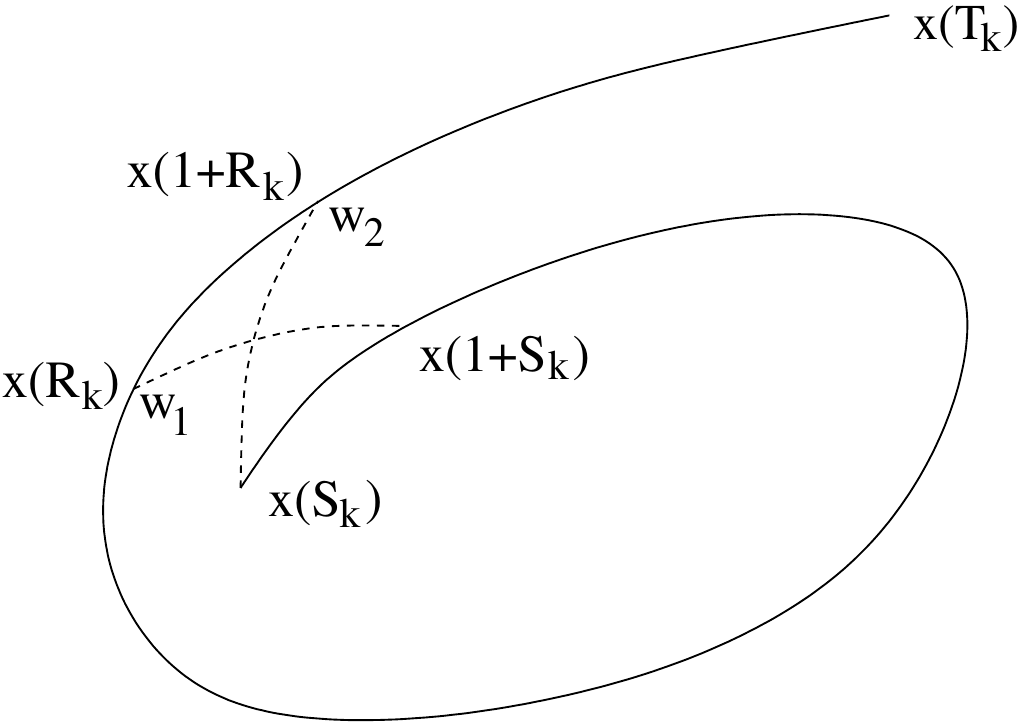}} 
\caption{The auxiliary segments $w_1$ an $w_2$.}
\label{near}
\end{figure}

By the definition of $S_k$ and $T_k$ in~\eqref{defSTCS} and the continuity of $g(t)$ on its domain we have that
\begin{equation}\label{sktkm}
g(S_k)\le C(B+1)\rho, \qquad
g(T_k)\le C(B+1)\rho.
\end{equation}

By~\eqref{sktkd}, \eqref{tvvt} and \eqref{sktkm} we have 
for $s\in [0,1]$ that
\begin{align*}
d\big(\psi_{s+R_k}(\tt) &,\Ga(v(s+R_k))\big)\le
\\
&\le 
 D \ee^{-\la\min\{s+R_k-S_k,T_k-s-R_k\}}
\big[d(\psi_{S_k}(\tt),\Ga(v(S_k))) + d(\psi_{T_k}(\tt),\Ga(v(T_k)))\big] 
\notag \\
&\le D \,\ee^0\;[g(S_k)+g(T_k)]
\le 2 DC (B+1) \rho.
\notag\\
d(\Ga(v(s+&S_k)),\psi_{s+S_k}(\tt)) \le 2 D C(B+1)\rho. 
\end{align*}
From~\eqref{tvvt} we have that 
$$
v(s+R_k)=s+R_k+v=s+S_k+v+ m_k T =v(s+S_k) + m_k T.
$$
So that $\Ga(v(s+R_k))=\Ga(v(s+S_k))$.
Adding the inequalities above we get
\begin{equation}
\forall s\in[0,1]\qquad d(\psi_{s+R_k}(\tt),\psi_{s+S_k}(\tt)) \le 4   DC(B+1)\rho.
\label{2cr}
\end{equation}

In local coordinates about $\pi(\Ga)$ define
\begin{alignat*}{6}
w_1(s+R_k) &= (1-&&s) \,&&x(s+R_k)  \;&+& &&s \, &&x(s+S_k), \qquad  s\in[0,1];
\\
w_2(s+S_k) &= &&s \, &&x(s+R_k) &+&\; (1-&&s)\, &&x(s+S_k),\qquad s\in [0,1].
\end{alignat*}
By Lemma~\ref{LKTr}\eqref{KTrb} and \eqref{2cr}
we have that
$$
A_{L+\phi}(x\vert_{[S_k,1+S_k]})+A_{L+\phi}(x|_{[R_k,1+R_k]})
\ge 
A_{L+\phi}(w_1)+A_{L+\phi}(w_2) -6 K D^2C^2 (B+1)^2\rho^2.
$$
Since the pairs of segments $\{\,x\vert_{[S_k,1+S_k]}, \, x|_{[R_k,1+R_k]}\,\}$
and $\{\, w_1,\,w_2\,\}$ have the same collections of endpoints
$$
\int_{S_k}^{1+S_k} du(\dx) + \int_{R_k}^{1+R_k} du(\dx)
= \oint_{w_1}du +\oint_{w_2}du.
$$
Therefore, since $c(L+\phi)$ is constant, 
\begin{equation}\label{crossingp}
A_\L(x\vert_{[S_k,1+S_k]})+A_\L(x|_{[R_k,1+R_k]})
\ge 
A_\L(w_1)+A_\L(w_2) -6 K  D^2 C^2 (B+1)^2 \rho^2.
\end{equation}

The integral of $d_xu$ on closed curves is zero. Therefore
\begin{equation}\label{cllo}
c(\L)=c(L+\phi+c(L+\phi))=0.
\end{equation}
Since $w_1*x|_{[1+S_k,R_k]}$ is a closed curve and $c(\L)=0$,  using \eqref{cloga},
\begin{equation}\label{closedp}
A_\L(w_1)+A_\L(x|_{[1+S_k,R_k]}) \ge 0.
\end{equation}
Using \eqref{Ldu} and \eqref{cLp1},
\begin{equation}\label{LL>}
\L = (L-du) + \phi + c(L+\phi)\ge 0+0 -\tfrac 1T \,\de^2(\gad)^2. 
\end{equation}
Since $T_k(\tt)-R_k(\tt)\le T+2$, 
using~\eqref{T>1}, on the curve $w_2*x|_{[1+R_k,T_k]}$
 we have that
\begin{equation}\label{restp}
A_\L(w_2)+A_\L(x|_{[1+R_k,T_k]}) \ge
 - \tfrac{T+2}T\, \de^2(\gad)^2 \ge - 3\, \de^2(\gad)^2.
\end{equation}
From \eqref{crossingp}, \eqref{closedp} and \eqref{restp} we get that
\begin{alignat*}{3}
A_\L(x|_{[S_k,T_k]})
&\ge A_\L(w_1) 
&&+A_\L(w_2)-6KD^2 C^2(B+1)^2\rho^2
\\
&  &&+A_\L(x|_{[1+S_k,R_k]})+A_\L(x|_{[1+R_k,T_k]})
\\
&\ge -6 KD&&^2C^2(B+1)^2\rho^2 - 3 \, \de^2(\gad)^2.
\end{alignat*}
{\it Case 2:  If $T_k-S_k\le T+2$,} from~\eqref{LL>} we also have
\begin{align*}
\hskip -2cm
A_\L(x|_{[S_k,T_k]})
&\ge - \tfrac{T+2}T \, \de^2(\gad)^2
\ge - 3\, \de^2(\gad)^2
\\
&\ge -6 KD^2C^2(B+1)^2\rho^2 - 3 \, \de^2(\gad)^2.
\end{align*}
Adding inequality \eqref{farp} and using \eqref{rho2} we obtain
a positive lower bound for the action independent of $k$:
$$
A_\L(x|_{[T_{k+1},T_k]}) 
\ge 
 \left(  \tfrac 1{32}\, \e\, (\ogd)^{2} -\de^2\,(\gad)^2\right) \a
  -12 K D^2C^2(B+1)^2\rho^2 - 3 \, \de^2(\gad)^2
    > 0.
$$

Since $x$ is  semi-static for $L+\phi$, and then also for $\L$, and by~\eqref{cllo}
$c(\L)=0$, the total action is finite:
 $$
 A_\L(x|_{]-\infty,0]})\le \max_{y,z\in M}\Phi^\L_{c(\L)}(y,z)<+\infty.
 $$ 
Therefore there must be at most finitely many $T_k$'s.

By item~\eqref{stcs3} in claim~\ref{stcs},  we have that $\a$-limit$(x,\dx)=\Ga$.
Since $\a$-limits of semi-static orbits are static (Ma\~n\'e \cite[theorem V.(c)]{Ma7}),
we obtain that $\Ga\subset\cA(L+\phi)$. This finishes the proof of proposition~\ref{Ppert}.

\hfill\qed

\appendix

\section{Shadowing}
\label{ashadowing}

  Let $\psi$ be the flow of a $C^1$ vector field on a compact manifold $M$.
  A compact $\psi$-invariant subset  $\La\subset M$
  is {\it  hyperbolic} for $\psi$ if the tangent bundle 
  restricted to $\La$ is decomposed as the Whitney sum
  $T_\La M = E^s \oplus E\oplus E^u$, where $E$ is the 1-dimensional 
  vector bundle tangent to the flow and there are constants $C,\la>0$ such that
  \begin{enumerate}[(a)]
  \item\label{hipa} $D\psi_t(E^s) = E^s$, $D\psi_t(E^u)=E^u$ for all $t\in\re$.
  \item\label{hipb} $| D\psi_t(v)| \le C\,\ee^{-\la t} |v|$ for all $v\in E^s$, $t\ge 0$.
  \item\label{hipc} $|D\psi_{-t}(u)|\le C\, \ee^{-\la t} |u|$ for all $u\in E^u$, $t\ge 0$.
  \end{enumerate} 
  It follows from the definition that the hyperbolic splittig 
  $E^s \oplus E\oplus E^u$ over $\La$ is continuous.

  From now on we shall assume that $\La$ does not contain fixed points
  for $\psi$.
  For $x\in \La$ define the following   stable and unstable sets:
  \begin{align}
  W^{ss}(x):&=\{\,y\in M\;|\; d(\psi_t(x),\psi_t(y))\to 0 \text{ as }t\to +\infty\,\},
  \notag\\
  W^{ss}_\e(x):&=\{\,y\in W^{ss}(x)\;|\;d(\psi_t(x),\psi_t(y))\le\e\;\;\forall t\ge 0\,\},
  \notag\\
  W^{uu}(x):&=\{\,y\in M\;|\; d(\psi_{-t}(x),\psi_{-t}(y))\to 0\text{ as } t\to +\infty\,\},
  \notag\\
  W^{uu}_\e(x):&=\{\,y\in W^{uu}(x)\;|\;d(\psi_{-t}(x),\psi_{-t}(y))\le \e\;\;\forall t\ge 0\,\},
  \label{wuue}\\
  \intertext\quad
  W^s_\e(x):&=\{\,y\in M\;|\;d(\psi_t(x),\psi_t(y))\le\e\;\;\forall t\ge 0\,\},
  \notag\\
   W^{u}_\e(x):&=\{\,y\in M\;|\;d(\psi_{-t}(x),\psi_{-t}(y))\le \e\;\;\forall t\ge 0\,\}.
   \notag
  \end{align}
  
  \medskip 
  
  Conditions~\{\eqref{hipa},\eqref{hipb},\eqref{hipc}\} are equivalent to \{\eqref{hipa},\eqref{hipd}\},
  where
  \begin{enumerate}[(a)]
  \addtocounter{enumi}{3}
  \item\label{hipd} There exists $T>0$ such that
   $\lV D\psi_T|_{E^s}\rV <\tfrac 12$ \quad and \quad
   $\lV D\psi_{-T}|_{E^u}\rV<\tfrac 12$.
  \end{enumerate}

  Let $\fX^k(M)$ be the Banach manifold of the $C^k$ vector fields on $M$, $k\ge 1$.
  Let $X=\partial_t\psi_t$ be the vector field of $\psi_t$. For $Y\in\fX^k(M)$ denote by
  $\psi^Y_t$ the flow of $Y$.

  \begin{Proposition}\label{unifhip}\quad

  There are open sets $X\in\cU\subset\fX^1(M)$ and $\La\subset U\subset M$ 
  such that for every $Y\in\cU$ the set $\La_Y:=\bigcap_{t\in\re}\psi^Y_t(\ov U)$
  is hyperbolic for the flow $\psi^Y_t$ of $Y$, with uniform constants $C$, $\la$, $T$
  on~\eqref{hipb}, \eqref{hipc} and \eqref{hipd}.
  \end{Proposition}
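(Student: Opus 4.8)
The plan is to deduce the proposition from the cone criterion for hyperbolicity, using the $C^1$-continuity of the time-$T$ map and of its derivative. Fix $T>0$ as in \eqref{hipd}, so that $\lV D\psi_T|_{E^s}\rV<\tfrac12$ and $\lV D\psi_{-T}|_{E^u}\rV<\tfrac12$ on $\La$. Since $\La$ contains no fixed points, the flow direction is a nonvanishing continuous line field $E$, and $E^s\oplus E\oplus E^u$ is continuous over $\La$. First I would, using a finite atlas and a partition of unity, extend $E^s$, $E$, $E^u$ to continuous (not necessarily invariant) subbundles $\tilde E^s$, $\tilde E$, $\tilde E^u$ over an open neighbourhood $U_0\supset\La$, keeping $\tilde E=\langle X\rangle$ equal to the flow line. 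Around these I define the usual cone fields of small width $\a>0$: writing $v=v^s+v^0+v^u$ for the components in $\tilde E^s\oplus\tilde E\oplus\tilde E^u$, set $\cC^u_\a(z):=\{v:\lV v^s\rV\le\a\lV v^u\rV\}$ and, symmetrically, $\cC^s_\a(z):=\{v:\lV v^u\rV\le\a\lV v^s\rV\}$ (the flow direction sitting in neither cone).

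Next, by uniform continuity of $\psi$ and of its space-derivative on the compact set $[-T,T]\times\ov U_1$ for a smaller neighbourhood $U_1\Subset U_0$, and because on $\La$ the true splitting coincides with the extended one while \eqref{hipd} holds, I can choose $\a$ small and $U_1$ small enough that for the original flow, whenever $z$ has its orbit segment $\psi_{[0,T]}(z)$ (resp. $\psi_{[-T,0]}(z)$) inside $U_1$,
$$
D\psi_T\big(\ov{\cC^u_\a(z)}\big)\subset\cC^u_\a(\psi_T z),\qquad \lV D\psi_T v\rV\ge 2\lV v\rV\quad\forall v\in\cC^u_\a(z),
$$
and dually $D\psi_{-T}\big(\ov{\cC^s_\a(z)}\big)\subset\cC^s_\a(\psi_{-T}z)$ with $\lV D\psi_{-T}v\rV\ge 2\lV v\rV$ for $v\in\cC^s_\a(z)$. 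For the openness statement I would then invoke continuous dependence of solutions of ODEs and of their derivatives with respect to initial conditions on the vector field in the $C^1$ topology, uniformly on the compact time interval $[-T,T]$ and the compact set $\ov U_1$: this gives a $C^1$-neighbourhood $\cU\ni X$ and a neighbourhood $U$ of $\La$ with $\ov U\Subset U_1$ such that the displayed cone inclusions and the expansion/contraction estimates, with the same constant $2$, remain valid for $\psi^Y_{\pm T}$ and every $Y\in\cU$, along every orbit segment of $\psi^Y$ contained in $\ov U$. In particular they hold on $\La_Y=\bigcap_{t\in\re}\psi^Y_t(\ov U)$, whose orbits stay in $\ov U\subset U_1$ forever.

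Finally, for each $Y\in\cU$ the compact invariant set $\La_Y$ carries the continuous cone field $\cC^s_\a,\cC^u_\a$, strictly invariant and uniformly expanding (resp. contracting) under $\psi^Y_{\pm T}$, and the standard cone argument (as in \cite[Prop.~5.1.8]{FH}) produces $D\psi^Y$-invariant subbundles
$$
E^u_Y(z)=\textstyle\bigcap_{n\ge0}D\psi^Y_{nT}\big(\cC^u_\a(\psi^Y_{-nT}z)\big),\qquad E^s_Y(z)=\textstyle\bigcap_{n\ge0}D\psi^Y_{-nT}\big(\cC^s_\a(\psi^Y_{nT}z)\big),
$$
of the correct dimensions, complementary to the flow line $E_Y=\langle Y\rangle$, with $\lV D\psi^Y_{nT}|_{E^s_Y}\rV\le 2^{-n}$ and $\lV D\psi^Y_{-nT}|_{E^u_Y}\rV\le 2^{-n}$. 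Interpolating over $|t|\le T$ with the uniform bound $C_0:=\sup\{\lV D\psi^Y_t(z)\rV:|t|\le T,\ z\in\ov U_1,\ Y\in\cU\}<\infty$ yields \eqref{hipb} and \eqref{hipc} with $C:=2C_0$ and $\la:=\tfrac{\log 2}{T}$, uniformly in $Y\in\cU$, and \eqref{hipd} holds with the same $T$; this is the claim, with the open sets $\cU$ and $U$ just constructed.

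The main obstacle is the uniform $C^1$-dependence used in the openness step — namely that $D\psi^Y_t(z)$, and not merely $\psi^Y_t(z)$, depends continuously on $Y$ in the $C^1$ topology, uniformly for $(t,z)\in[-T,T]\times\ov U_1$ (which follows from Gronwall applied to the variational equation $\dot A=DY(\psi^Y_t)\,A$) — together with the bookkeeping needed to keep the flow direction as a genuine $1$-dimensional summand, as required by the definition of hyperbolicity used here; once these two points are handled, the cone criterion is routine.
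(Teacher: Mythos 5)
Your approach is exactly the one the paper sketches (it gives only a pointer to the cone criterion, citing Hasselblatt--Katok 17.4.4 and Fisher--Hasselblatt 5.1.8, and to $C^1$-continuity of $D\psi^Y_T$), so the proposal fills in precisely the intended argument. One small glitch: as written, $\cC^u_\a(z)=\{v:\lV v^s\rV\le\a\lV v^u\rV\}$ \emph{does} contain the flow direction (a vector with $v^s=v^u=0$ satisfies $0\le 0$), contrary to your parenthetical remark; you should either add the condition $\lV v^0\rV\le\a\lV v^u\rV$ or work in the normal bundle, as you yourself flag in your closing paragraph.
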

  
   Proposition~\ref{unifhip} can be proven by a characterization of hyperbolicity using
   cones (cf. Hasselblatt-Katok~\cite[Proposition 17.4.4]{HK}) and obtaining uniform 
   contraction (expansion) for a fixed iterate in $\La_Y$.  See
   Fisher-Hasselblatt \cite{FH} prop. 5.1.8 p. 256].

  \medskip

  \begin{mysec}{\bf Proposition {\cite[ 5.6, p.~63]{HPS}}, 
  {\cite[ 1.3]{Bowen6}}, {\cite[ 6.6.1]{FH}}.}
  \label{pHPS}\quad

  There are constants $C,\,\la>0$ such that, for small $\e$,
  \begin{enumerate}[(a)]
  \item $d\big(\psi_t(x),\psi_t(y)\big)\le C\, \ee^{-\la t}\, d(x,y)$
  when $x\in \La$, $y\in W_\e^{ss}(x)$, $t\ge 0$.
  \item $d\big(\psi_{-t}(x),\psi_{-t}(y)\big)\le C\, \ee^{-\la t}\, d(x,y)$
  when $x\in \La$, $y\in W_\e^{uu}(x)$, $t\ge 0$.

  \end{enumerate} 
  \end{mysec}
  
\medskip

  \begin{mysec}{\bf Canonical Coordinates \cite[3.1]{PS}, \cite[4.1]{HPS}, \cite[7.4]{Smale},
  \cite[1.4]{Bowen6}, \cite[1.2]{Bowen3}, \cite[6.2.2]{FH}:}
  \label{caco}
  \end{mysec}\vskip -5pt
  {\it There are $\a, \,\ga>0$ for which the following is true:
  If $x, y\in \La$ and $d(x,y)\le \a$ then there is a unique 
  $v=v(x,y)\in \re$ with $|v|\le \ga$ such that 
  \begin{equation}\label{ecaco}
  \langle x,y\rangle:=W_\ga^{ss}(\psi_v(x))\cap W^{uu}_\ga(y) \ne \emptyset.
  \end{equation}
  This set consists of a single point, which we denote 
  $\langle x,y\rangle\in M$. The maps $v$ and 
  $\langle\;,\;\rangle$ are continuous on the set
  $\{\,(x,y)\;|\; d(x,y)\le \a\,\}\subset \La\times\La$.
  }

     \begin{Lemma}\label{B4}

  There are $\eta_0>0$, $B>1$, and open sets $\La\subset U$, $X\in\cU\subset\fX^k(M)$  such that  
  \newline
  if $d(x,y)\le\eta_0$,  
  $Y\in\cU$, $x,\,y\in\La^Y_U$ and $\eta = B\, d(x,y)$  then
  \begin{gather}
  \langle x,y\rangle\in W^{ss}_\eta(\psi^Y_v(x))\cap W^{uu}_\eta(y)
  \qquad \text{with }\quad
  |v(x,y)| \le \eta \qquad 
  \label{vdxy}\\
  \text{and } \qquad d(x,\psi^Y_v(x))\le \eta.
  \label{Bdxy}
  \end{gather}
  \end{Lemma}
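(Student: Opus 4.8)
\medskip
\noindent{\bf Idea of the proof:}

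This lemma is the quantitative version, uniform over a $C^1$-neighbourhood of $X$, of the Canonical Coordinates property~\ref{caco}: beyond the continuity of $\langle\,\cdot\,,\,\cdot\,\rangle$ and $v(\,\cdot\,,\,\cdot\,)$ with $\langle x,x\rangle=x$ and $v(x,x)=0$, one must show that these maps are \emph{Lipschitz} in $(x,y)$ near the diagonal, with a Lipschitz constant $C_1$ controlled solely by the hyperbolicity data and hence uniform in $Y$. Granting such a $C_1$, one has $d\big(\langle x,y\rangle,\psi^Y_v(x)\big)$, $d\big(\langle x,y\rangle,y\big)$, $|v(x,y)|\le C_1\,d(x,y)$; by~\ref{pHPS} the distances from $\langle x,y\rangle$ to $\psi^Y_v(x)$ along the strong stable leaf and from $\langle x,y\rangle$ to $y$ along the strong unstable leaf of $y$ stay $\le C\cdot C_1\,d(x,y)$ for all $t\ge0$; and $d\big(x,\psi^Y_v(x)\big)\le\big(\sup_{Y\in\cU}\lV Y\rV_{C^0}\big)\,C_1\,d(x,y)$ since $\psi^Y_0=\mathrm{id}$. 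Choosing $B$ large enough in terms of $C_1$, the constant $C$ of~\ref{pHPS}, and $\sup_{Y\in\cU}\lV Y\rV_{C^0}$, and setting $\eta:=B\,d(x,y)$, all of these quantities are $<\eta$, which is precisely~\eqref{vdxy} and~\eqref{Bdxy}.

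To produce the uniform Lipschitz constant $C_1$, I would first invoke Proposition~\ref{unifhip} to get $X\in\cU\subset\fX^1(M)$ and $\La\subset U\subset M$ so that every $\La^Y_U:=\bigcap_{t}\psi^Y_t(\ov U)$, $Y\in\cU$, is hyperbolic with constants $C,\la,T$ independent of $Y$ (restricting $\cU$ to $\fX^k(M)$ gives the neighbourhood of the statement). The graph-transform construction of the local strong and weak stable/unstable manifolds, carried out in a fixed finite atlas of $M$ adapted to the splitting, yields a uniform size $\e_0>0$ and local manifolds $W^{ss}_{\e_0}(p),W^{s}_{\e_0}(p),W^{uu}_{\e_0}(p)$, $p\in\La^Y_U$, $Y\in\cU$, which are graphs of maps with a uniform $C^1$ bound; moreover, by continuity of the splitting, compactness of $\ov U$, and the $Y$-uniformity of Proposition~\ref{unifhip}, the angle between $T_pW^{s}_{\e_0}(p)=E^s_p\oplus E_p$ and $T_pW^{uu}_{\e_0}(p)=E^u_p$ is bounded below by a uniform $\alpha_0>0$. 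One then realises the canonical coordinate as the transverse intersection
\[
\langle x,y\rangle \;=\; W^{s}_{\e_0}(x)\cap W^{uu}_{\e_0}(y),
\]
with $v(x,y)$ the unique small time for which $\langle x,y\rangle\in W^{ss}_{\e_0}(\psi^Y_v(x))$. In the fixed atlas this is the zero of an implicit equation whose linearisation at $y=x$ is, up to the bounded graph data, the splitting $E^s_x\oplus E_x\oplus E^u_x$, hence invertible with inverse of norm bounded in terms of $\alpha_0$ and the graph bounds; the implicit function theorem with uniform constants (equivalently, a uniform contraction-mapping argument) then gives, for $d(x,y)$ below a uniform threshold $\eta_0$, existence and uniqueness of $\langle x,y\rangle$ together with the uniform Lipschitz bound in $(x,y)$, and likewise $|v(x,y)|\le C_1\,d(\langle x,y\rangle,x)\le C_1\,d(x,y)$ since the flow-time parameter along the weak stable foliation of $W^{s}_{\e_0}(x)$ is uniformly Lipschitz in the base point and vanishes at $x$. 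Alternatively one may simply quote the Lipschitz dependence of the local product structure on its arguments from the literature (e.g.\ \cite{HK}, \cite{FH}, \cite{HPS}), the constant there depending only on the hyperbolicity data and hence being uniform over $\cU$ by Proposition~\ref{unifhip}.

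The main obstacle is the bookkeeping of uniformity: one must verify that \emph{every} constant --- the size $\e_0$ of the local manifolds, the $C^1$ bounds on the graph maps, the transversality angle $\alpha_0$, the implicit-function constant $C_1$, and the threshold $\eta_0$ --- is controlled by the $Y$-uniform hyperbolicity constants $C,\la,T$ of Proposition~\ref{unifhip}, by the fixed atlas, and by the $C^1$ size of $Y$ on $\cU$, rather than by the individual orbit or by $Y$ itself. This is exactly what Proposition~\ref{unifhip} is built to provide, so the argument closes once the standard graph-transform and product-structure estimates are stated in their $Y$-parametrised form.
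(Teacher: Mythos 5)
Your proposal is correct and follows essentially the same route as the paper's own proof: both establish a uniform Lipschitz bound on the canonical coordinates $\langle x,y\rangle$ and $v(x,y)$ near the diagonal (coming from transversality of the splitting and the $Y$-uniform hyperbolicity constants of Proposition~\ref{unifhip}), and then apply Proposition~\ref{pHPS} to downgrade the size-$\gamma$ local manifolds to size $\eta=B\,d(x,y)$. The only distinction is the mechanism for the Lipschitz constant — the paper builds invariant cone families separating $E^u$ from $E^s$ and from $E^s\oplus\re Y$, while you invoke a uniform implicit-function/graph-transform argument — but these are interchangeable standard devices for the same transversality estimate.
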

  
   \begin{proof}\quad

We have that $\langle x,x\rangle =x$ and $v(x,x)=0$.
By uniform continuity, given $\de>0$, for $d(x,y)$ small enough
\begin{equation}\label{dewx}
d(\langle x,y\rangle,x)\le \de,
\qquad
d(\langle x, y\rangle,y)\le \de,
\end{equation}
and $v=v(x,y)$ is so small that
\begin{equation}\label{dpsvd}
d(\psi_v(x),x)\le \de.
\end{equation}

 The continuity of the hyperbolic splitting implies that the angles
  $\measuredangle(E^s,E^u)$, $\measuredangle(Y,E^s)$ and 
  $\measuredangle(E^s\oplus\re Y,E^u)$
   are bounded away from zero, 
  uniformly on $\La^Y_V:=\bigcap_{t\in\re} \psi^Y_{-t}(\ov V)$, for some $V\supset U$
   and all $Y$ in an open set $\cU_0\subset\fX^1(M)$
  with $X\in\cU_0$.   
    There is $\be_1>0$ such that if $x,\,y\in\La^Y_U$ and $d(x,y)<\be_1$
  then 
  $$
  \langle x, y\rangle =W^s_\ga(x)\cap W^{uu}_\ga(y)\in V.
  $$

   The strong local invariant manifolds $W^{ss}_\ga$, $W^{uu}_\ga$ 
  are tangent to $E^s$, $E^u$ at $\La^Y_V$ and for a fixed $\ga$ as $C^1$ submanifolds
  they vary continuously on the base point $x\in M$ and on the vector field in the $C^1$
  topology (cf.  \cite[Thm. 4.3]{CroPo},\cite[Thm. 4.1]{HPS}). 
  There is a family of small cones $E^u_X(x)\subset C^u(x)\subset T_xM$, $E^s_X(x)\subset C^s(x)\subset T_xM$ 
  defined on a neighborhood $W$ of $\La$  invariant under $D\psi^Y_{-1}$ and $D\psi^Y_1$ respectively, for
  $Y$ in a $C^1$ neighborhood $\cW$ of $X$. The exponential of these cones contain $W^{uu}_{\ga}(x)$ and $W^{ss}_{\ga}(x)$ for 
  $x\in\La^Y_W$ and $Y\in\cW$. The angles between these cones are uniformly bounded 
  away from zero, so for example if $z^u\in W^{uu}(x)$, $z^s\in W^{ss}(x)$ and $d(z^u,x)$, $d(z^s,x)$ are small,
  then $d(z^u,x)+d(z^s,x)< A_0\,d(z^u, z^s)$ for some $A_0>0$.
  We can construct similiar cones separating $E^u$ from $E^s\oplus\re X$.
  
  Shrinking $U$ and $\cU$ if necessary there are $0<\be_2<\be_1$  and $A_1,\,A_2,\,A_3>0$
  such that if $Y\in\cU$, $x,\,y\in \La^Y_U$ and $d(x,y)<\be_2$,
  taking 
  $w:=\langle x,y\rangle\in W^s_\ga(x)\cap W^{uu}_\ga(y)$
  and $v$  such  that $w\in W^{ss}_\ga(\psi^Y_v(x))$,  i.e. 
$\psi^Y_v(x)\in\psi^Y_{[-1,1]}(x)\cap W^{ss}_\ga(w)$, then
  \begin{align}
d(x,w)+d(w,y)&\le A_1\, d(x,y),
 \label{a1xwy}
\\ 
d(x,\psi^Y_v(x))+d(\psi^Y_v(x),w)
&\le A_2\, d(x,w) \le A_2 A_1 \,d(x,y),
\label{a2xpw}
\\
|v|\le A_3\, d(x,\psi^Y_v(x))&\le A_3 A_2 A_1 \, d(x,y).
\notag
  \end{align}

  We can assume that $\cU_0$ and $U$ are so small that the constants
  $C$, $\la$, $\e$ in Proposition~\ref{pHPS} can be taken uniform for all 
  $Y\in\cU_0$ and in $\La^Y_U$.
    By Proposition~\ref{pHPS},
since $w:=\langle x,y\rangle \in W^{ss}_\ga(\psi_v(x))$, 
 we have that
\begin{align*}
\forall t\ge 0 \qquad
d\big(\psi^Y_t(\langle x,y\rangle),\psi^Y_{t}(\psi^Y_v(x))\big)
&\le C\,\ee^{-\la t}\,d(w,\psi_v^Y(x)) 
\\
&\le A_2 A_1 C\, \ee^{-\la t} \,d(x,y)
\qquad\text{ using~\eqref{a2xpw}.} 
\end{align*} 
Take $B_1:=(1+A_2)A_1 C$. Then if $d(x,y)<\be_2$ and 
$\eta= B_1\,d(x,y)$
 we obtain that
$\langle x,y\rangle\in W^{ss}_\eta(\psi^Y_v(x))$.

Since $w=\langle x,y\rangle\in W^{uu}_\ga(y)$ we have that 
\begin{align*}
\forall t\ge 0 \qquad
d(\psi^Y_{-t}(\langle x,y\rangle),\psi^Y_{-t}(y))
&\le C\, \ee^{-\la t} \,d(w,y) 
\\
&\le A_1C\, \ee^{-\la t} d(x,y)
\qquad\text{using }\eqref{a1xwy}.
\end{align*}
Thus if $\eta = B_1\, d(x,y)$ then $\langle x, y\rangle \in W^{uu}_\eta(y)$.

By~\eqref{dewx} and~\eqref{dpsvd}
there is $0<\eta_0<\be_2$ such that if $d(x,y)\le \eta_0$ then
$d(w,x)$, $d(w,y)$ and $d(\psi_v(x),x)$ are small enough
to satisfy the above inequalities.
Now let 
$$
B:=\max\{ 2,\, B_1,\, A_3 A_2 A_1,\, A_2 A_1\}.
$$

 \end{proof}

    \begin{Proposition}
    \label{B16}
  \quad
  
     There are open sets $X\in\cU\subset\fX^1(M)$
     and $\La\subset U\subset M$ and $\eta_0,\ga>0$, $B>1$ such that
     $$
     \forall \eta>0 \qquad \exists
      \be=\be(\eta)=\tfrac 1B\,\min\{\eta,\eta_0\} \qquad \forall Y\in\cU
     $$
   if $\psi_t=\psi^Y_t$  is the flow of $Y$, 
  $x, y \in \Om^Y_U:=\bigcap_{t\in\re}\psi_t(\ov U)$ and $s:\re\to\re$  continuous with $s(0)=0$ 
  satisfy
  \begin{equation}\label{csh}
  d(\psi_{t+s(t)}(y),\psi_t(x))\le\be\quad\text{ for }|t|\le L,
  \end{equation}
  then 
  \begin{equation}\label{setav}
  |s(t)|\le3\eta \quad \text{ for all }|t|\le L,
  \qquad
   |v(x,y)|\le\eta \quad \text{ and }
  \end{equation}
    \begin{align}
  \forall |s|\le L, \qquad
  &d(\psi_s(y),\psi_{s+v}(x))\le C\,\ee^{-\la(L-|s|)}\,
  \big[d(\psi_L(w),\psi_L(y))+d(\psi_{-L}(w),\psi_{-L+v}(x))\big],
  \notag\\
  &\text{where }\qquad w:=\langle x,y\rangle 
  =W^{ss}_\ga(\psi_v(x))\cap W^{uu}_\ga(y).
  \label{d<egdd1}
  \end{align}
  also
  \begin{equation}\label{dsh}
  \forall |s|\le L, \qquad
   d(\psi_s(y),\psi_s\psi_v(x))\le C\, \ga\,e^{-\la (L-|s|)}.
  \end{equation}
   In particular 
  $$
  d(y,\psi_v(x))\le C\, \ga\, e^{-\la L}.
  $$
  \end{Proposition}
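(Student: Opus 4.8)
\quad The idea is to put $y$ and $\psi_v(x)$ into the local product chart around $w:=\langle x,y\rangle$ furnished by the canonical coordinates \ref{caco}, and to convert the hypothesis $d(\psi_{t+s(t)}(y),\psi_t(x))\le\beta$ on $[-L,L]$ into the two statements $\psi_L(w)\in W^{uu}_\e(\psi_L(y))$ and $\psi_{-L}(w)\in W^{ss}_\e(\psi_{-L+v}(x))$; the exponential estimates \eqref{d<egdd1}--\eqref{dsh} are then read off from Proposition~\ref{pHPS}. Fix constants: let $\alpha,\ga$ be the thresholds of \ref{caco}, let $B_4>1$ and $\eta_*$ be the $B$ and $\eta_0$ of Lemma~\ref{B4}, and let $C,\la$ and a small $\e>0$ come from Proposition~\ref{pHPS} (valid with uniform constants on $\Om^Y_U$, $Y\in\cU$, by Proposition~\ref{unifhip}). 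Choose $\eta_0$ small and $B$ large, with $B\ge 2B_4$, $\eta_0<\min\{\alpha,\eta_*,\ga\}$, and $\eta_0/B$ small enough that every quantity of size $O(\eta_0/B)$ occurring below is $\le\e$. Given $\eta>0$ put $\beta:=\tfrac1B\min\{\eta,\eta_0\}$ and assume $x,y\in\Om^Y_U$, $s$ continuous with $s(0)=0$ and $d(\psi_{t+s(t)}(y),\psi_t(x))\le\beta$ for $|t|\le L$. Since $d(x,y)\le\beta<\eta_*$, Lemma~\ref{B4} (with its $\eta$ equal to $B_4\,d(x,y)$) gives $v:=v(x,y)$, $|v|\le B_4\beta\le\eta$ (the middle claim of \eqref{setav}), and $w:=\langle x,y\rangle\in W^{ss}_\ga(\psi_v(x))\cap W^{uu}_\ga(y)$ with $d(x,\psi_v(x))\le B_4\beta$ and $d(\psi_r(w),\psi_{r+v}(x))\le B_4\beta$ for all $r\ge0$.

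\emph{Bound on $s$.}\quad For $t\in[-L,L]$ the pair $\big(\psi_t(x),\psi_{t+s(t)}(y)\big)$ is $\beta<\alpha$-close, so \ref{caco} gives a continuous $\theta(t):=v\big(\psi_t(x),\psi_{t+s(t)}(y)\big)$ with $|\theta(t)|\le B_4\beta$. I claim $t\mapsto\theta(t)-s(t)$ is locally constant. Using the flow-equivariance of canonical coordinates — $v(\psi_\tau a,\psi_\tau b)=v(a,b)$ and $v(a,\psi_\delta b)=v(a,b)+\delta$ for small $\tau,\delta$, consequences of the flow-invariance of $W^{ss}_\ga,W^{uu}_\ga$ and the uniqueness in \eqref{ecaco} — flowing $\langle\psi_{t_0}(x),\psi_{t_0+s(t_0)}(y)\rangle$ by $t-t_0$ exhibits a point of $W^{ss}_\ga(\psi_{\theta(t_0)}\psi_t(x))\cap W^{uu}_\ga(\psi_{t+s(t_0)}(y))$, so $v(\psi_t(x),\psi_{t+s(t_0)}(y))=\theta(t_0)$, whence $\theta(t)=v(\psi_t(x),\psi_{s(t)-s(t_0)}\psi_{t+s(t_0)}(y))=\theta(t_0)+s(t)-s(t_0)$. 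By connectedness of $[-L,L]$ and $s(0)=0$, $\theta(t)-s(t)\equiv\theta(0)=v$, hence $|s(t)|\le|\theta(t)|+|v|\le 2B_4\beta\le\eta\le 3\eta$, the first claim of \eqref{setav}.

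\emph{Trapping and contraction.}\quad For $r\in[0,L-|v|]$ put $t=r+v$ in the hypothesis; combining $d(\psi_{r+v+s(r+v)}(y),\psi_{r+v}(x))\le\beta$ with $d(\psi_r(w),\psi_{r+v}(x))\le B_4\beta$ and $|v+s(r+v)|\le 3B_4\beta$ (flowing $y$ over a time of that length changes position by $O(\eta_0/B)$) gives $d(\psi_r(w),\psi_r(y))\le O(\eta_0/B)\le\e$; on the residual interval $[L-|v|,L]$, of length $\le B_4\beta$, uniform continuity keeps this distance $\le\e$; and for $r\le0$, $d(\psi_r(w),\psi_r(y))\le\ga\le\e$ from $w\in W^{uu}_\ga(y)$. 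Hence $d(\psi_r(w),\psi_r(y))\le\e$ for all $r\le L$, so $\psi_L(w)\in W^{uu}_\e(\psi_L(y))$ and $d(\psi_L(w),\psi_L(y))\le\ga$. The symmetric argument (using $w\in W^{uu}_\ga(y)$ for $r\le0$, $w\in W^{ss}_\ga(\psi_v(x))$ for $r\ge0$, and $t=r+v$ on $[-L+|v|,0]$) gives $\psi_{-L}(w)\in W^{ss}_\e(\psi_{-L+v}(x))$ with $d(\psi_{-L}(w),\psi_{-L+v}(x))\le\ga$. Applying Proposition~\ref{pHPS}(b) to the first and (a) to the second, for $|s|\le L$
\begin{align*}
d(\psi_s(w),\psi_s(y))&\le C\,\ee^{-\la(L-s)}d(\psi_L(w),\psi_L(y))\le C\,\ee^{-\la(L-|s|)}d(\psi_L(w),\psi_L(y)),\\
d(\psi_s(w),\psi_{s+v}(x))&\le C\,\ee^{-\la(L+s)}d(\psi_{-L}(w),\psi_{-L+v}(x))\le C\,\ee^{-\la(L-|s|)}d(\psi_{-L}(w),\psi_{-L+v}(x)),
\end{align*}
and the triangle inequality $d(\psi_s(y),\psi_{s+v}(x))\le d(\psi_s(y),\psi_s(w))+d(\psi_s(w),\psi_{s+v}(x))$ gives \eqref{d<egdd1} directly; bounding the two boundary terms by $\ga$ (and, if necessary, enlarging $C$) yields \eqref{dsh} and, at $s=0$, the final displayed inequality.

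The step I expect to be the main obstacle is the reparametrization bookkeeping: deriving $|s(t)|\le 3\eta$ for a merely continuous $s$ from the uniqueness and equivariance of canonical coordinates, and then inserting that bound into the trapping estimates so that all intermediate distances provably stay below the local-product scale $\e$ — uniformly in $Y\in\cU$, in $L$, and in $\eta$ (the last being exactly why the hypothesis carries the cap $\beta\le\eta_0/B$ rather than $\beta\le\eta/B$). Once $\psi_L(w)\in W^{uu}_\e(\psi_L(y))$ and $\psi_{-L}(w)\in W^{ss}_\e(\psi_{-L+v}(x))$ are in hand, everything else is a direct application of Proposition~\ref{pHPS}.
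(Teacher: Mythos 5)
Your proof reaches the same conclusions by a genuinely different route. The paper establishes the reparametrization bound $|s(t)|\le 3\eta$ and the trapping $d(\psi_t(y),\psi_t(w))<\tfrac12\ga$ simultaneously by a connectedness/contradiction argument: it defines $A:=\{t\in[0,L] : |s(t)|\ge 3\eta\text{ or }d(\psi_t(y),\psi_t(w))\ge\tfrac12\ga\}$ (and the mirror set $B$), takes $t_1=\inf A$, and reapplies Lemma~\ref{B4} at $t_1$ to push both quantities strictly below the defining thresholds, a contradiction. You instead decouple the two: first obtain the $s$-bound from the local constancy of $\theta(t)-s(t)$, then feed it into a triangle-inequality trapping estimate. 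This split is cleaner to read and actually yields a sharper bound ($|s(t)|\le 2B_4\beta$ rather than $3\eta$); the subsequent passage to Proposition~\ref{pHPS} matches the paper exactly.

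There is, however, a real gap in the local-constancy step. You justify the equivariance $v(\psi_\tau a,\psi_\tau b)=v(a,b)$ and $v(a,\psi_\delta b)=v(a,b)+\delta$ by appealing to ``flow-invariance of $W^{ss}_\ga,W^{uu}_\ga$.'' But the $\ga$-local strong manifolds are not flow-invariant: $\psi_\tau\big(W^{ss}_\ga(z)\big)\subset W^{ss}_\ga(\psi_\tau z)$ only for $\tau\ge 0$, and dually $\psi_\tau\big(W^{uu}_\ga(z)\big)\subset W^{uu}_\ga(\psi_\tau z)$ only for $\tau\le 0$. So flowing $\langle\psi_{t_0}(x),\psi_{t_0+s(t_0)}(y)\rangle$ by $\tau=t-t_0$ does not automatically land inside the $\ga$-local product chart at the flowed base points, and uniqueness in \ref{caco} (which is stated precisely for the $\ga$-local intersections) cannot be invoked directly. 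What rescues the argument — and what you must actually write down — is that Lemma~\ref{B4} places the intersection at the much finer scale $\eta'=B_4\beta\ll\ga$, i.e.\ in $W^{ss}_{\eta'}\cap W^{uu}_{\eta'}$, so for $|\tau|\le 1$ a Gronwall estimate keeps the flowed point within $\ee^{K}\eta'<\ga$ of both flowed base points; only then does uniqueness apply and give $v(\psi_t x,\psi_{t+s(t_0)}y)=\theta(t_0)$. The same issue recurs in the shift rule $v(a,\psi_\delta b)=v(a,b)+\delta$. The paper's $A/B$ construction avoids this entirely: the constraint of staying within $\tfrac12\ga$ of $w$ is built into the definition of the bad set and is therefore guaranteed up to the first escape time, so it never has to flow an intersection point outside the chart. (A minor slip: for $r\le 0$ your bound should read $d(\psi_r(w),\psi_r(y))\le B_4\beta\le\e$, not ``$\le\ga\le\e$'', since $\ga$ is the canonical-coordinate scale and need not be below $\e$.)
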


 \begin{proof}
  
  Let $\ga=B \eta_0$ with $\{\eta_0, B\}$ 
  from~\ref{B4}.
  We may assume that $\eta$ is so small that
  \begin{gather}
  \eta < \tfrac{\ga}8,
  \label{etaga8}
  \\
  \sup\{\,d(\psi_u(x),x)\;:\; x\in M,\,|u|\le 4\eta\,\}\le \tfrac \ga 8.
  \label{u4eta}
  \end{gather}
  Let 
  \begin{equation}\label{beeta}
  \be=\be(\eta) := \tfrac 1B\, \min\{\eta,\eta_0\},
  \end{equation}
  where $B>1$ and $\eta_0$  are
  from lemma~\ref{B4}. Consider $x$, $y$ and $s(t)$ as
  in the hypothesis. Since $s(0)=0$ we have that  $d(x,y)\le \be$.
  Using lemma~\ref{B4} we can define
   \begin{equation}\label{wxyeta}
   w:=\langle x,y\rangle =W^{ss}_\eta(\psi_v(x))\cap W^{uu}_\eta(y)\ne\emptyset,
   \end{equation}
   we also have 
   \begin{equation}\label{vxyeta}
   |v|=|v(x,y)|\le \eta.
    \end{equation}
       Define the sets
   \begin{alignat*}{2}
   A&:=\{\,t\in[0,L]\;:\;|s(t)|\ge 3\eta\;&&\text{ or }
   \;d(\psi_t(y),\psi_t(w))\ge \tfrac 12{\ga}\,\},
   \\
   B&:=\{\,t\in[0,L]\;:\; |s(-t)|\ge 3\eta\;&&\text{ or }
    \;d(\psi_{-t+v}(x),\psi_{-t}(w))\ge \tfrac12\ga\,\}.
   \end{alignat*}
   
   Suppose that $A\ne \emptyset$. Let $t_1:=\inf A$. 
   Then 
   $d(\psi_t(y),\psi_t(w))\le \tfrac 12 \,\ga$, $\forall t\in[0,t_1]$.
   Since $w\in W^{uu}_\eta(y)$ and by~\eqref{etaga8}, $\eta<\tfrac 1{8}\ga$;
   from~\eqref{wuue} 
   we have that $d(\psi_t(y),\psi_t(w))\le \tfrac 18 \ga$, $\forall t\le 0$.
   Therefore
   \begin{equation}\label{wuut1}
   d(\psi_{t_1-r}(y),\psi_{t_1-r}(w))\le \tfrac 12 \,\ga, \qquad \forall r\ge 0.
   \end{equation}

   Since $s$ is continuous, $s(0)=0$ and $t_1\in\partial A$, we have that 
   $|s(t_1)|\le 3\eta$.
   Using~\eqref{u4eta} twice with $u=|s(t_1)|$, \eqref{wuut1} and the triangle inequality
   we obtain
   $$
   d(\psi_{t_1+s(t_1)-r}(y),\psi_{t_1+s(t_1)-r}(w))\le  \tfrac 34 \ga,
   \qquad \forall r\ge 0.
   $$
   Hence $\psi_{t_1+s(t_1)}(w)\in W^{uu}_\ga(\psi_{t_1+s(t_1)}(y))$. 
   From~\eqref{wxyeta},
   $w\in W^{ss}_\eta(\psi_v(x))$, and then
   \begin{equation}\label{uvga8}
   d(\psi_r(w),\psi_{r+v}(x))\le \eta<\tfrac \ga 8,
   \qquad \forall r\ge0.
   \end{equation}
   Since $|s(t_1)|\le 3\eta$, using~\eqref{u4eta} twice with $u=s(t_1)$, 
   and~\eqref{uvga8} with $r=t_1+p\ge 0$, and the triangle inequality,
   we get
   $$
   d(\psi_{t_1+s(t_1)+p}(w),\psi_{t_1+s(t_1)+v+p}(x))\
   \le\tfrac {3\ga}8, \qquad \forall p\ge 0.
   $$
   Hence $\psi_{t_1+s(t_1)}(w)\in W^{ss}_\ga(\psi_{s(t_1)+v}(\psi_{t_1}(x)))$.
   We have shown that
   \begin{equation}\label{pstst}
   \psi_{t_1+s(t_1)}(w)\in W^{ss}_\ga(\psi_{s(t_1)+v}(\psi_{t_1}(x)))
   \cap W^{uu}_\ga(\psi_{t_1+s(t_1)}(y)).
   \end{equation}
   Since $|s(t_1)+v|\le|s(t_1)|+|v|\le 4\eta<\ga$
   and by~\eqref{csh}, 
   \begin{equation}\label{lbe}
   d(\psi_{t_1+s(t_1)}(y),\psi_{t_1}(x))\le\be,
   \end{equation}
   equation~\eqref{pstst} implies that 
   \begin{gather*}
   v(\psi_{t_1}(x),\psi_{t_1+s(t_1)}(y))=s(t_1)+v(x,y),
   \\
   \psi_{t_1+s(t_1)}(w)=\langle \psi_{t_1}(x),\psi_{t_1+s(t_1)}(y)\rangle.
   \end{gather*}
   By Lemma~\ref{B4}, \eqref{lbe} and \eqref{beeta}, 
   \begin{gather}
   |s(t_1)+v|\le\eta \qquad \text{ and }
   \label{st1v}
   \\
   \psi_{t_1+s(t_1)}(w)\in W^{uu}_\eta(\psi_{t_1+s(t_1)}(y)),
   \;\text{in particular}
   \notag
   \\
   d(\psi_{t_1+s(t_1)}(w),\psi_{t_1+s(t_1)}(y))\le\eta.
   \label{dpstst}
   \end{gather}
   Since $|s(t_1)|\le 3\eta$, from~\eqref{u4eta},~\eqref{dpstst}
   and~\eqref{etaga8}, we get that
   $$
   d(\psi_{t_1}(w),\psi_{t_1}(y))\le \eta+2\left(\tfrac\ga 8\right)\le 
   \tfrac {3\ga}8.
   $$
   From~\eqref{st1v} and~\eqref{vxyeta} we have that
   $$
   |s(t_1)|\le|s(t_1)+v|+|v|\le 2\eta.
   $$
   These statements contradict $t_1\in A$.
   Hence $A=\emptyset$.
   
   Similarly one shows that $B=\emptyset$.
   Since $A=\emptyset$, inequality~\eqref{ywtga} holds for all $t\in[0,L]$.
   From~\eqref{wxyeta}, $w\in W^{uu}_\eta(y)$ and by~\eqref{etaga8}, 
   $\eta<\tfrac\ga 8$; thus inequality~\eqref{ywtga} also holds for $t\le 0$.
   \begin{equation}\label{ywtga}
   \forall t\le L\qquad d(\psi_t(y),\psi_t(w))< \tfrac 12{\ga}.
   \end{equation}
   Therefore
   \begin{equation}\label{psiLWuu}
    \psi_L(w)\in W^{uu}_{\frac 12\ga}(\psi_L(y)).
   \end{equation} 
   From Proposition~\ref{pHPS} we get
  \begin{equation*}
  \forall |s|\le L \qquad 
  d(\psi_s(w),\psi_s(y))\le C\,\ee^{-\la(L-|s|)}
  \,d(\psi_L(w),\psi_L(y)).
  \end{equation*}
  Similarly, $B=\emptyset$ imples that 
  \begin{equation}\label{psi-Lwss}
  \psi_{-L}(w)\in W^{ss}_{\frac 12\ga}(\psi_{-L+v}(x))
  \qquad \text{ and }
  \end{equation}
  $$
  \forall |s|\le L
  \qquad
  d(\psi_s(w),\psi_{s+v}(x))\le C\,\ee^{-\la(L-|s|)}\,
  d(\psi_{-L}(w),\psi_{-L+v}(x)).
  $$
   Adding these inequalities we obtain
  \begin{align}
  \forall |s|\le L \qquad
  &d(\psi_s(y),\psi_{s+v}(x))\le C\,\ee^{-\la(L-|s|)}\,
  \big[d(\psi_L(w),\psi_L(y))+d(\psi_{-L}(w),\psi_{-L+v}(x))\big],
  \notag\\
  &\text{where }\qquad w:=\langle x,y\rangle 
  =W^{ss}_\ga(\psi_v(x))\cap W^{uu}_\ga(y).
  \label{d<egdd}
  \end{align}
  This proves inequality~\eqref{d<egdd1}.
  
    From~\eqref{vxyeta}, $|v(x,y)|\le\eta$.
  The fact $A\cup B=\emptyset$ also gives $|s(t)|\le 3\eta$
  for $t\in[-L,L]$. This proves~\eqref{setav}.
  From~\eqref{psiLWuu}, \eqref{psi-Lwss} and~\eqref{d<egdd}
  we get inequality~\eqref{dsh}.
  
  \end{proof}

   \begin{Proposition}\label{B5}\quad
   
   Let $\ga$, $\eta_0$  and $\be=\be(\eta)$ be from Proposition~\ref{B16}.
   Given $\eta<\min\{\eta_0, \tfrac 12\ga\}$
   \begin{enumerate}[(a)]
   \item\label{B5a}
   If $x,\,y\in\La$ and $s:[0,+\infty[\to\re$ continuous
   with $s(0)=0$
   satisfy 
   $$
   d(\psi_{t+s(t)}(y),\psi_t(x))\le \be \qquad \forall t\ge0,
   $$
   then $|s(t)|\le 3\eta$ for all $t\ge 0$ and there is $|v(x,y)|\le\eta$
   such that $y\in W^{ss}_\ga(\psi_v(x))$.
   
   \item\label{B5b}
   Similarly, if $x,\,y\in\La$, $s:]-\!\infty,0]\to\re$ is continuous with $s(0)=0$
   and
   $$
   d(\psi_{t+s(t)}(y),\psi_t(x))\le\be \qquad \forall t\le 0,
   $$
    then $|s(t)|\le 3 \eta$ for all $t\le 0$ and there is $|v(x,y)|\le\eta$
   such that $y\in W^{uu}_\ga(\psi_v(x))$.
   \end{enumerate}
   \end{Proposition}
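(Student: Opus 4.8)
Part (b) follows from part (a) applied to the time-reversed flow $\widetilde\psi_t:=\psi_{-t}$ --- for which $\La$ is again hyperbolic, with $E^s$ and $E^u$ interchanged, so that $\widetilde W^{ss}=W^{uu}$ and $\widetilde W^{uu}=W^{ss}$, while $u\mapsto -s(-u)$ still vanishes at $0$ --- so it suffices to prove (a). Part (a) is the ``$L=+\infty$'' version of the forward half of the proof of Proposition~\ref{B16}, and I would carry it out under the same smallness conventions on $\eta$ used there (in particular \eqref{etaga8}, \eqref{u4eta}), which are consistent with $\eta<\min\{\eta_0,\tfrac12\ga\}$.

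Because $s(0)=0$, the hypothesis at $t=0$ gives $d(x,y)\le\be=\tfrac1B\min\{\eta,\eta_0\}$, so Lemma~\ref{B4} provides $v=v(x,y)$ with $|v|\le B\,d(x,y)\le\eta$ and the point $w:=\langle x,y\rangle\in W^{ss}_\eta(\psi_v(x))\cap W^{uu}_\eta(y)$. First I would put
\[
A:=\big\{\,t\ge0\;:\;|s(t)|\ge 3\eta\ \text{ or }\ d(\psi_t(y),\psi_t(w))\ge\tfrac12\ga\,\big\},
\]
a closed subset of $[0,+\infty)$ not containing $0$, and show $A=\emptyset$ by running the argument of Proposition~\ref{B16} with $[0,L]$ replaced by $[0,+\infty)$: if $A\ne\emptyset$, set $t_1:=\inf A$, so $t_1\in(0,+\infty)$ and $|s(t)|\le3\eta$, $d(\psi_t(y),\psi_t(w))\le\tfrac12\ga$ on $[0,t_1]$; then, using $w\in W^{uu}_\eta(y)$ to control $t\le0$, $w\in W^{ss}_\eta(\psi_v(x))$ to control $t\ge0$, \eqref{u4eta} to absorb the time shift $s(t_1)$, the shadowing bound $d(\psi_{t_1+s(t_1)}(y),\psi_{t_1}(x))\le\be$ (valid since $t_1\ge0$), and Lemma~\ref{B4} for the pair $\psi_{t_1}(x),\psi_{t_1+s(t_1)}(y)\in\La$, one reaches $|s(t_1)|\le2\eta$ and $d(\psi_{t_1}(y),\psi_{t_1}(w))<\tfrac12\ga$, contradicting $t_1\in A$. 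Hence $|s(t)|\le3\eta$ and $d(\psi_t(y),\psi_t(w))\le\tfrac12\ga$ for all $t\ge0$.

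Next I would upgrade this to $y=w$. Since $w\in W^{uu}_\eta(y)$ we also have $d(\psi_t(y),\psi_t(w))\le\eta<\tfrac12\ga$ for $t\le0$, with $d(\psi_t(y),\psi_t(w))\to0$ as $t\to-\infty$; thus $d(\psi_t(y),\psi_t(w))\le\tfrac12\ga$ for all $t\in\re$, so that $\psi_L(w)\in W^{uu}_{\frac12\ga}(\psi_L(y))$ for every $L\ge0$. Applying Proposition~\ref{pHPS}(b) to $\psi_L(y)\in\La$ and $\psi_L(w)\in W^{uu}_{\frac12\ga}(\psi_L(y))$ at time $L$,
\[
d(y,w)=d\big(\psi_{-L}(\psi_L(y)),\psi_{-L}(\psi_L(w))\big)\le C\,\ee^{-\la L}\,d\big(\psi_L(y),\psi_L(w)\big)\le\tfrac12 C\,\ga\,\ee^{-\la L},
\]
which tends to $0$ as $L\to+\infty$; therefore $y=w=\langle x,y\rangle\in W^{ss}_\eta(\psi_v(x))\subset W^{ss}_\ga(\psi_v(x))$ with $|v(x,y)|\le\eta$, which is assertion (a).

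The step I expect to be the real obstacle is this last one: the finite-time shadowing scheme only yields the uniform bound $\tfrac12\ga$, whereas lying in $W^{ss}_\ga$ requires honest convergence to $0$. The way around it is to notice that the past of the pair $(y,w)$ is controlled for free by $w\in W^{uu}_\eta(y)$, so that the two orbits stay $\tfrac12\ga$-close for \emph{all} times with the trivial reparametrization; a single contraction estimate along the unstable bundle --- namely Proposition~\ref{pHPS}(b) on the windows $[-L,L]$, letting $L\to\infty$ --- then forces $y=w$ outright, so that no appeal to expansivity or to a local product structure statement beyond Lemma~\ref{B4} is needed.
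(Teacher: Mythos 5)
Your proof is correct and follows the same skeleton as the paper's --- the set $A$, the $t_1 := \inf A$ contradiction run with $[0,L]$ replaced by $[0,+\infty)$, and the time-reversal reduction of (b) to (a) are exactly what the paper intends by ``the same proof as in Proposition~\ref{B16}.'' Where you genuinely add something is the last step. The paper goes from $A=\emptyset$ directly to ``$w\in W^{ss}_{\frac12\ga}(y)$,'' but $A=\emptyset$ only yields $d(\psi_t(y),\psi_t(w))<\tfrac12\ga$ for $t\ge0$, i.e.\ $w\in W^s_{\frac12\ga}(y)$; membership in $W^{ss}$ additionally requires $d(\psi_t(y),\psi_t(w))\to0$, which does not follow from the bound alone. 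You close this by noting $w\in W^{uu}_\eta(y)$ and that a point on the strong unstable manifold which stays forward-bounded must coincide with $y$, which you extract cleanly from Proposition~\ref{pHPS}(b) by pushing forward by $\psi_L$ and letting $L\to\infty$. This gives $y=w\in W^{ss}_\eta(\psi_v(x))\subset W^{ss}_\ga(\psi_v(x))$, which is in fact slightly stronger than the paper's stated conclusion. The one implicit assumption you share with the paper is that $\tfrac12\ga$ lies within the ``small $\e$'' range of Proposition~\ref{pHPS}, which is part of the standing smallness conventions on $\ga$ in this appendix, so this is not a new gap. Net effect: same route, with the one genuinely nontrivial implication made explicit rather than asserted.
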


  \begin{proof}\quad
  
  We only prove item~\eqref{B5a}.
  The same proof as in Proposition~\ref{B16} shows that 
  taking
  $$
  w:=\langle x,y\rangle=W^{ss}_\eta(\psi_v(x))\cap W^{uu}_\eta(y)\ne \emptyset,
  $$
  we have that
  $|v|=|v(x,y)|\le\eta$ and 
  $$
   \emptyset=A:=\{\,t\in[0,+\infty[\;:\;|s(t)|\ge 3\eta\;\text{ or }
   \;d(\psi_t(y),\psi_t(w))\ge \tfrac 12 {\ga}\,\}.
  $$
  Therefore $|s(t)|\le 3\eta$ for all $t\ge 0$ and 
  $w\in W^{ss}_{\frac 12 \ga }(y)\cap W^{ss}_\eta(\psi_v(x))$.
  Since $\tfrac 12 \ga+\eta<\ga$ we get that
  $y\in W^{ss}_\ga(\psi_v(x))$.

  \end{proof}

  \medskip
  
   \begin{Proposition}\label{B71}\quad

         There are $D>0$, $\be_0>0$ and  open sets $X\in\cU\subset\fX^1(M)$,
     $\La\subset U\subset M$,  such that
     $$
     \forall \be\in]0,\be_0] \qquad \forall Y\in\cU,
     $$
   if $Y\in\cU$, $\psi_t=\psi^Y_t$  is the flow of $Y$, 
  $x,\, y \in \La^Y_U:=\bigcap_{t\in\re}\psi_t(\ov U)$ 
  and $s:\re\to\re$  continuous with $s(0)=0$ satisfy
  \begin{equation}\label{csh2}
  d(\psi_{t+s(t)}(y),\psi_t(x))\le\be\quad\text{ for }|t|\le L,
  \end{equation}
  then $|s(t)|\le D\be$ for all $|t|\le L$ and there is $|v|=|v(x,y)|\le D\be$ 
  such that
     \begin{align*}
  \forall |s|\le L, \qquad
  d(\psi_s(y),\psi_{s+v}(x))\le D\,\be\,\ee^{-\la(L-|s|)}.
  \end{align*}
  
  Moreover for all $|s|\le L$,
   \begin{equation}\label{dextxy}
  d(\psi_s(y),\psi_{s+v}(x))\le
   D\,\ee^{-\la(L-|s|)}\,
  \big[ d(\psi_L(y),\psi_{L+v}(x)) + d(\psi_{-L}(y),\psi_{-L+v}(x)) \big],
  \end{equation}
  and $v$ is determined by
  $$
  \langle x,y\rangle =W^{ss}_\ga(\psi_v(x))\cap W^{uu}_\ga(y)\ne \emptyset.
  $$

  \end{Proposition}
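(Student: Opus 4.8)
The plan is to derive Proposition~\ref{B71} from the two preceding shadowing results, namely Proposition~\ref{B16} (applied at the scale $\eta$, with its constants $C$, $\la$, $\ga$, $B$, $\eta_0$) and the canonical‐coordinate estimates of Lemma~\ref{B4}. The point is that Proposition~\ref{B16} already provides everything in a form keyed to a free parameter $\eta$; what remains is to relabel that parameter in terms of $\be$ and to pass from the bound involving the auxiliary point $w=\langle x,y\rangle$ to the bound~\eqref{dextxy} stated only in terms of the orbits of $x$ and $y$. So first I would set $\be_0:=\tfrac1B\min\{\eta_0,\tfrac\ga8\}$ (shrinking further if needed so that the smallness hypotheses~\eqref{etaga8}, \eqref{u4eta} used inside Proposition~\ref{B16} hold with $\eta$ replaced by $B\be$), and given $\be\le\be_0$ apply Proposition~\ref{B16} with the choice $\eta:=B\,\be\ge B\,d(x,y)$; this is legitimate because the hypothesis $d(\psi_{t+s(t)}(y),\psi_t(x))\le\be=\tfrac1B\,\eta\le\tfrac1B\min\{\eta,\eta_0\}=\be(\eta)$ is exactly the hypothesis~\eqref{csh} needed there. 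Proposition~\ref{B16} then yields $|s(t)|\le 3\eta=3B\be$ for $|t|\le L$, $|v(x,y)|\le\eta\le B\be$, the inclusions $w\in W^{ss}_\ga(\psi_v(x))\cap W^{uu}_\ga(y)$ with in fact $w\in W^{ss}_{\eta}(\psi_v(x))\cap W^{uu}_{\eta}(y)$ from Lemma~\ref{B4}, and the decay estimate~\eqref{d<egdd1}. Taking $D:=3B$ (enlarged below) already gives $|s(t)|\le D\be$ and $|v|\le D\be$.

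Next I would clean up~\eqref{d<egdd1}. On the right‐hand side the two terms are $d(\psi_L(w),\psi_L(y))$ and $d(\psi_{-L}(w),\psi_{-L+v}(x))$; since $w\in W^{uu}_\eta(y)$ and $w\in W^{ss}_\eta(\psi_v(x))$, by~\eqref{wuue} (and its stable analogue) both are $\le\eta\le B\be$, so~\eqref{d<egdd1} already proves the ``$D\be\,\ee^{-\la(L-|s|)}$'' bound with constant $2CB$; absorb this into $D$. For the sharper estimate~\eqref{dextxy} I would bound $d(\psi_L(w),\psi_L(y))$ and $d(\psi_{-L}(w),\psi_{-L+v}(x))$ by the corresponding \emph{orbit} distances $d(\psi_L(y),\psi_{L+v}(x))$ and $d(\psi_{-L}(y),\psi_{-L+v}(x))$. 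Using the triangle inequality, $d(\psi_L(w),\psi_L(y))\le d(\psi_L(w),\psi_{L+v}(x))+d(\psi_{L+v}(x),\psi_L(y))$; the first summand is controlled because $w\in W^{ss}_\ga(\psi_v(x))$ (Proposition~\ref{pHPS} gives $d(\psi_L(w),\psi_{L+v}(x))\le C\,\ee^{-\la L}d(w,\psi_v(x))$, which is $\le C\ee^{-\la L}\cdot\eta$, and this in turn is no larger than a uniform multiple of the orbit gap after one notes it is exponentially small). The cleanest route, which I would actually use, is: from $w\in W^{uu}_\ga(y)$ we get $d(\psi_{-L}(w),\psi_{-L}(y))\le C\ee^{-\la L}d(w,y)$, and from $w\in W^{ss}_\ga(\psi_v(x))$ we get $d(\psi_L(w),\psi_L(\psi_v(x)))\le C\ee^{-\la L}d(w,\psi_v(x))$; combine these with the contraction already established to see $d(w,y)$ and $d(w,\psi_v(x))$ are each $\le$ a constant times $d(\psi_L(y),\psi_{L+v}(x))+d(\psi_{-L}(y),\psi_{-L+v}(x))$ (because $w$ lies on the local stable leaf of $\psi_v(x)$ and the local unstable leaf of $y$, the ``local product'' geometry of Lemma~\ref{B4} bounds the leg lengths by the endpoint separations). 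Substituting back into~\eqref{d<egdd1} gives~\eqref{dextxy} with a final constant $D$, and the displayed defining relation $\langle x,y\rangle=W^{ss}_\ga(\psi_v(x))\cap W^{uu}_\ga(y)$ is exactly~\eqref{wxyeta}.

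I expect the only real obstacle to be this last bookkeeping step: converting the decay estimate phrased via $w=\langle x,y\rangle$ into one phrased purely via the $x$‑ and $y$‑orbits, i.e.\ showing that the ``leg lengths'' $d(\psi_{\pm L}(w),\cdot)$ are dominated by the orbit endpoint separations $d(\psi_{\pm L}(y),\psi_{\pm L+v}(x))$ rather than merely by $\eta$. This is where one must use the transversality of the stable and unstable cones (the angle bounds established in the proof of Lemma~\ref{B4}, giving an inequality of the shape $d(z^u,x)+d(z^s,x)\le A_0\,d(z^u,z^s)$), applied at the two times $\pm L$ to the triple $\psi_{\pm L}(y)$, $\psi_{\pm L+v}(x)$, $\psi_{\pm L}(w)$; once that local‑product bound is in hand the rest is the triangle inequality and absorbing constants. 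Everything else — the reduction of hypotheses, the choice $\be_0$, $\eta=B\be$, $D=$ (a product of $C$, $B$, $A_0$ and absolute constants) — is routine, since Proposition~\ref{B16} has already done the hard dynamical work of proving $A=B=\emptyset$ and hence the exponential shadowing.
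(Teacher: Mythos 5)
Your high-level plan is the same as the paper's: apply Proposition~\ref{B16} with $\eta := B\be$, then reinterpret the two ``leg length'' terms
$d(\psi_L(w),\psi_L(y))$ and $d(\psi_{-L}(w),\psi_{-L+v}(x))$ appearing in~\eqref{d<egdd1} as controlled by the orbit separations at times $\pm L$ via a transversality estimate of the type~\eqref{wpsiv}, having first observed that $\psi_{\pm L}(w)=\langle\psi_{\pm L}(x),\psi_{\pm L}(y)\rangle$. Your final paragraph does identify this as the key step, and it is. However, the way you actually execute the reduction contains a genuine error.

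The claim ``since $w\in W^{uu}_\eta(y)$ and $w\in W^{ss}_\eta(\psi_v(x))$, by~\eqref{wuue} (and its stable analogue) both [leg terms] are $\le\eta\le B\be$'' is false, and it controls the \emph{wrong} time direction in both cases. By~\eqref{wuue}, $w\in W^{uu}_\eta(y)$ means $d(\psi_{-t}(w),\psi_{-t}(y))\le\eta$ for $t\ge 0$; this says nothing about the \emph{forward} distance $d(\psi_L(w),\psi_L(y))$, which generically grows under $\psi_t$ because $w$ and $y$ sit on the same unstable leaf. Symmetrically, $w\in W^{ss}_\eta(\psi_v(x))$ controls $d(\psi_t(w),\psi_{t+v}(x))$ for $t\ge 0$, not the backward term $d(\psi_{-L}(w),\psi_{-L+v}(x))$. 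What the proof of Proposition~\ref{B16} actually establishes about these two quantities (via $A=B=\emptyset$, see~\eqref{ywtga} and~\eqref{psi-Lwss}) is that they are $\le\tfrac12\ga$ --- a fixed constant independent of $\be$ --- so this route cannot produce the $O(\be)$ bound on its own. Your ``cleanest route'' paragraph has a related confusion: starting from $d(\psi_{-L}(w),\psi_{-L}(y))\le C\ee^{-\la L}d(w,y)$ gives only a \emph{lower} bound on $d(w,y)$, and bounding $d(w,y)$, $d(w,\psi_v(x))$ at time~$0$ does not let you substitute into~\eqref{d<egdd1}, whose right side involves the images at times $\pm L$.

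The missing steps, which are the content of the paper's proof, are: (i)~use that the flow carries strong invariant manifolds to strong invariant manifolds, so $\psi_{\pm L}(w)=\langle\psi_{\pm L}(x),\psi_{\pm L}(y)\rangle$ (after checking $d(\psi_{\pm L}(x),\psi_{\pm L}(y))<\a$, which needs $\be$ small); (ii)~apply the transversality estimate~\eqref{wpsiv} at times $\pm L$ to obtain the analogue of~\eqref{wxyd}, namely the leg lengths are $\le A$ times the orbit gaps $d(\psi_{\pm L+v}(x),\psi_{\pm L}(y))$, which substituted into~\eqref{d<egdd1} gives~\eqref{dextxy}; and (iii)~for the $D\be\,\ee^{-\la(L-|s|)}$ bound, estimate those orbit gaps at $\pm L$ by $B_1\be$ directly from the hypothesis~\eqref{csh2} together with $|s(\pm L)|\le 3B\be$ and $|v|\le B\be$ and the sup norm of the vector field. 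Step~(iii) in particular is absent from your write-up, and it is where the factor $\be$ actually enters.
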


    \begin{proof}\quad
  
  Let $C$, $\cU$, $U$ $\eta_0>0$ and $B$ be from Proposition~\ref{B16}.
  The continuity of the hyperbolic splitting implies that the angle
  $\measuredangle(E^s,E^u)$ is bounded away from zero.
  As in the argument after \eqref{dpsvd}, there are invariant families of cones separating
  $E^s$ from $E^u$ whose image under the exponential map contain the local invariant 
  manifolds $W^{ss}_\ga$, $W^{uu}_\ga$.
  And hence as in~\eqref{a1xwy}
  there are $A,\,\be_1>0$ such that if $x,\, y\in \La^Y_U$,
  $d(x,y)<\be_1$ and 
  $$
  w=\langle x,y\rangle =W^{ss}_\ga(\psi_v(x))\cap W^{uu}_\ga(y),
  $$
  then 
  \begin{equation}\label{wpsiv}
  d(w,\psi_v(x))+d(w,y) \le A\, d(\psi_v(x),y).
  \end{equation}
  Suppose that $0<\be<\min\{\tfrac 1B\eta_0,\,\be_1\}$
  and 
  $x$, $y$, $s(t)$, $\psi^Y_t$, $L$ satisfy~\eqref{csh2}.
  Apply Proposition~\ref{B16} with $\eta:= B\be$.
  
  Then $|s(L)|\le 3\eta$, 
  and 
  \begin{align*}
  d(\psi_L(y),\psi_L(x))
  &\le d(\psi_{L+s(L)}(y),\psi_L(x))+|s(L)| \cdot \Vert Y\Vert_{\sup}
  \\
  &\le \be + 3\eta \lV Y\rV_{\sup} < \a,
  \end{align*}
  if $\be$ is small enough.
  So that $\langle \psi_L(x),\psi_L(y)\rangle$ is well defined.
  Similarly $|s(-L)|\le 3\eta$ and
  $d(\psi_{-L}(y),\psi_{-L}(x))<\a$.
  Since the time $t$ map $\psi_t$ preserves the family of
  strong invariant manifolds,
  in equation~\eqref{d<egdd1} we have that 
  \begin{align*}
  \psi_L(w) &=\langle \psi_{L}(x),\psi_L(y)\rangle =
  W^{ss}_\ga(\psi_{L+v}(x))\cap W^{uu}_\ga(\psi_L(y)),
  \\
    \psi_{-L}(w) &=\langle \psi_{-L}(x),\psi_{-L}(y)\rangle =
  W^{ss}_\ga(\psi_{-L+v}(x))\cap W^{uu}_\ga(\psi_{-L}(y)).
  \end{align*}
  Therefore, using~\eqref{wpsiv},
  \begin{align}
  d(\psi_L(w),\psi_L(y))+d&(\psi_{-L}(w),\psi_{-L+v}(x))
  \notag\\
  &\le  A \big[ d(\psi_{L+v}(x),\psi_L(y)) + d(\psi_{-L+v}(x),\psi_{-L}(y)) \big],
   \label{wxyd}
   \\
  d(\psi_{L+v}(x),\psi_{L}(y)) &\le
  d(\psi_{L+v}(x),\psi_{L}(x))+d(\psi_{L}(x),\psi_{L+s(L)}(y))
  +d(\psi_{L+s(L)}(y),\psi_{L}(y))
  \notag \\
  &\le |v| \lV Y\rV_{\sup}+\be+ |s(L)|\,\lV Y\rV_{\sup}
  \notag\\
  &\le B_1 \be,
  \notag
  \end{align}
  for some $B_1=B_1(\cU)>0$, because by Proposition~\ref{B16},
  $|v|\le \eta$, $|s(t)|\le 3\eta$ and $\eta = B \be$, so that
  $$
  |v| \le B\be,\qquad |s(t)|\le 3 B \be.
  $$
  A similar estimate holds for $d(\psi_{-L+v}(x),\psi_{-L}(y))$
  and hence from~\eqref{wxyd},
  $$
  d(\psi_L(w),\psi_L(y))+d(\psi_{-L}(w),\psi_{-L+v}(x))
  \le  2AB_1 \,\be.
  $$
  Replacing this  in~\eqref{d<egdd1} we have that
  $$
  \forall |s|\le L,\qquad
  d(\psi_s(y),\psi_{s+v}(x))\le D_1\,\be \,\ee^{-\la(L-|s|)},
  $$
  where $D_1=2 A B_1 C$.
  
  By~\eqref{wxyd} and~\eqref{d<egdd1} we also have that 
  $$
  d(\psi_s(y),\psi_{s+v}(x))\le
  AC\,\ee^{-\la(L-|s|)}\,
  \big[ d(\psi_L(y),\psi_{L+v}(x)) + d(\psi_{-L}(y),\psi_{-L+v}(x)) \big].
  $$
  Now take $D:=\max\{ D_1,\,B,\,3B,\,AC\,\}$.
  
  \end{proof}

 \begin{Definition}\label{dfe}\quad
  
  We say that $\psi|_\La$ is {\it flow expansive} if for every 
  $\eta>0$ there is $\ov\a=\ov\a(\eta)>0$ such that 
  if $x\in\La$, $y\in M$ and  there is $s:\re\to\re$ continuous
  with $s(0)=0$ and $d(\psi_{s(t)}(y),\psi_t(x))\le\ov \a$ for all $t\in\re$,
  then
  $y=\psi_v(x)$ for some 
  $|v|\le \eta$.
  \end{Definition}

  \begin{Remark}\label{rue}\quad
  
  Observe that Proposition~\ref{B16} implies 
  uniform expansivity in a neighbourhood of $(X,\La)$, namely
  there are neighbourhoods $X\in\cU\subset\fX^1(M)$ and $\La\subset U\subset M$
  such that for every $\eta>0$ there is $\a=\a(\eta,\cU,U)>0$ such that if
  $x\in\La^Y_U:=\cap_{t\in\re}\psi^Y_t(\ov U)$, $y\in M$, $s:(\re,0)\to (\re,0)$ 
  continuous and $\forall t\in \re$, $d(\psi^Y_{s(t)}\big(y),\psi^Y_t(x)\big)<\a$;
  then $y=\psi^Y_v(x)$ for some $|v|<\eta$.
  See also Fisher-Hasselblatt \cite[cor. 5.3.5]{FH}.
  
  This also implies uniform h-expansivity of their time-one maps as in Definition~\ref{duhe}.
  \end{Remark}

  \begin{Definition}\label{hexpan}\quad
    
  Let $f:X\to X$ be a homeomorphism. For $\e>0$ and $x\in X$ define
  $$
  \Ga_\e(x,f):=\{\,y\in X\;|\;\forall n\in \Z\quad  d(f^n(y),f^n(x))\le \e \,\}.
  $$
  We say that $f$ is {\it entropy expansive} or {\it h-expansive} if there
  is $\e>0$ such that
  $$
  \forall x\in X\qquad h_{\text{top}}(\Ga_\e(x,f),f)=0.
  $$
  Such an $\e$ is called an h-expansive constant for $f$.
    \end{Definition}

 \begin{Definition}\label{duhe}\quad
   
   Let $\cU$ be a topological subspace of $C^0(X,X)\supset\cU$ and 
   $Y\subseteq X$ compact.
   We say that $\cU$ is {\it uniformly h-expansive} on $Y$
   if there is $\e>0$ such that 
   $$
   \forall f\in\cU\quad \forall y\in Y
   \qquad
   h_{\text{top}}(\Ga_\e(y,f),f)=0.
   $$
   In our applications $\cU$ will be a $C^1$ neighbourhood of a diffeomorphism
   endowed with the $C^0$ topology. An h-expansive homeomorphism corresponds
   to $\cU=\{ f\}$.
   \end{Definition}

  \begin{Definition}\label{B8}\quad
  
  Let $L>0$, we say that $(T,\Ga)$ is an $L$-specification if
  \begin{enumerate}[(a)]
  \item $\Ga=\{x_i\}_{i\in\Z}\subset \La$.
  \item $T=\{t_i\}_{i\in\Z}\subset\re$\quad and\quad  $t_{i+1}-t_i\ge L$\; $\forall i\in\Z$.
  \end{enumerate}
  We say that the specification $(T,\Ga)$ is $\de$-possible if
  $$
  \forall i\in\Z\qquad d(\psi_{t_i}(x_i),\psi_{t_i}(x_{i-1}))\le \de.
  $$ 
  \end{Definition}

\begin{Theorem}\label{SHL}\quad

    Given $\ell>0$ there are $\de_0=\de_0(\ell)>0$ and $E=E(\ell)>0$ 
    such that if $0<\de<\de_0$ and
  $(T,\Ga)=(\{t_i\},\{x_i\})_{i\in\Z}$ is a $\de$-possible $\ell$-specification on $\La$ then
  there exist $y\in M$ and $\si:\re\to\re$ continuous, piecewise linear,  strictly increasing with 
  $\si(t_0)=t_0$ and $|\si(t)-t| < E\,\de$  such that
  \begin{equation}\label{shlip}
  \forall i\in \Z \quad
   \forall t\in]t_i,t_{i+1}[
   \qquad
  d\big(\psi_{\si(t)}(y),\psi_t(x_i)\big)< E\,\de.
  \end{equation}
  Moreover, if the specification is periodic then $y$ is a periodic point for $\psi$.
\end{Theorem}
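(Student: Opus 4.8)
The plan is to construct the shadowing orbit by gluing the legs of the specification, using the local product structure, and to pass to a limit over finite truncations. After a relabeling we may assume $t_0=0$. Since $M$ is compact, it suffices to produce, with constants $\de_0,E$ depending only on $\ell$, a point $y_N\in M$ and a reparametrization $\si_N$ satisfying~\eqref{shlip} for every finite truncation $(\{t_i\},\{x_i\})_{|i|\le N}$; if the $\si_N$ are equi-Lipschitz with slopes near $1$, then a subsequence of $(y_N,\si_N)$ converges to a pair $(y,\si)$ for which~\eqref{shlip} holds on all of $\re$, and the reparametrization is automatically piecewise linear, continuous and strictly increasing with $\si(0)=0$ and $|\si(t)-t|<E\de$.

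The elementary step is a \emph{two-leg gluing}. Let $\eta_0,B,\ga,C,\la$ be the constants from Lemma~\ref{B4}, \ref{caco} and Proposition~\ref{pHPS}. If two orbit arcs of $\psi|_\La$ meeting at a common time $a$ are at distance $\le\de'$ there, the canonical coordinate $w=\langle\,\cdot\,,\cdot\,\rangle\in W^{ss}_\ga(\psi_v(\cdot))\cap W^{uu}_\ga(\cdot)$ is defined with $|v|\le B\de'$ and $d(w,\cdot)\le(B{+}1)\de'$; by Proposition~\ref{pHPS} the orbit through $w$ tracks the first arc for $t<a$ and tracks the second arc, shifted by $v$, for $t>a$, with error $\le C\,\ee^{-\la|t-a|}(B{+}1)\de'$ on each side. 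Iterating this over a block of consecutive legs spanning a fixed total time in $[T,2T]$, where $T$ is chosen with $C\,\ee^{-\la T}(B{+}1)<\tfrac12$, produces a single orbit arc $E_1\de$-shadowing the block; since a block contains at most $\lceil 2T/\ell\rceil$ legs, $E_1=E_1(\ell)$. This consolidation replaces the given specification by a \emph{super-specification} whose super-legs are genuine orbit arcs of span in $[T,2T]$, glued at the super-junctions with error $\le(2E_1{+}1)\de$.

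Each super-leg is now long enough that the transverse contraction over it is $<\tfrac12$, so the standard telescoping/graph-transform construction — built again from \ref{caco} and the contraction of Proposition~\ref{pHPS}, with the a posteriori estimates of Propositions~\ref{B16} and~\ref{B71} controlling the time shifts — produces a genuine orbit that $E\de$-shadows the super-specification, hence the original one, and is uniquely determined near it. The reparametrization $\si$ is the concatenation of all the linear time-shift segments arising from the gluings and from the super-level telescoping; since the true jump times are spaced by at least $\ell$ and each individual shift is $O(\de)$, for $\de<\de_0(\ell)$ the map $\si$ is piecewise linear, continuous, strictly increasing, with $\si(0)=0$ and $|\si(t)-t|<E\de$. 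For a periodic specification one runs the construction on one period's worth of super-legs arranged cyclically; the fixed point of the cyclic telescoping is a genuine periodic orbit. Equivalently, $\psi_\tau(y)$ also $E\de$-shadows the specification shifted by its period $\tau$, so by the uniform expansivity of $\psi|_\La$ (Remark~\ref{rue}) it equals $\psi_{v'}(y)$ for a small $v'$, and absorbing $v'$ into $\si$ makes the orbit exactly $\tau$-periodic.

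The principal obstacle is the error bookkeeping when $\ell$ is small: a single pass gluing legs shorter than the hyperbolicity relaxation time would amplify junction errors by the factor $C(B{+}1)>1$ at each step, so the preliminary consolidation into super-legs of span $\ge T$ is unavoidable, and it is precisely what makes $E$ and $\de_0$ depend on $\ell$. A secondary subtlety is assembling the many local time shifts into one globally defined, continuous, strictly increasing, piecewise-linear $\si$ with the uniform bound $|\si(t)-t|<E\de$; strict monotonicity requires the linear pieces to be long, which is exactly where the lower bound $\ell$ on the gaps between jumps enters.
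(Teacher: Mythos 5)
The paper does not prove Theorem~\ref{SHL} from scratch; immediately after its statement the text cites Bowen \cite{Bowen6}, Palmer \cite{Palmer}, and Fisher--Hasselblatt \cite{FH} for the underlying shadowing lemma for flows near a (not necessarily locally maximal) hyperbolic set, and then invokes Proposition~\ref{B71} to upgrade the cited rough conclusion to the stated quantitative one: $|\si(t)-t|<E\de$ with $\si(t)-t$ constant on each $]t_i,t_{i+1}[$. Your proposal is a self-contained reconstruction of Bowen's construction from the local ingredients assembled in the appendix --- two-leg gluing via the canonical coordinates of Lemma~\ref{B4}, consolidation of $\ell$-spaced legs into super-legs of span $\ge T$ so that the transverse contraction beats $\tfrac12$, a graph-transform/telescoping pass on the super-specification, and a compactness limit to reach the bi-infinite case --- and your diagnosis of where the $\ell$-dependence enters (error amplification over the $\lceil 2T/\ell\rceil$ inner gluings, plus the monotonicity of $\si$) is correct. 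The trade-off is clear: your route is independent of outside references but requires tracking a geometric accumulation of errors and time shifts through two rounds of gluing and keeping them below the domain of validity $\eta_0$ of the canonical coordinates, whereas the paper's citation-plus-\ref{B71} route is shorter precisely because Proposition~\ref{B71} collapses any rough shadowing with a continuous reparametrization into one with a single constant shift per leg and exponential interior decay, which is the form needed in Proposition~\ref{palga}. If you flesh out your sketch, make explicit that the intermediate points produced by the inner gluings need not lie in $\La$ (which is not assumed locally maximal) but only in $\La^Y_U$, so you must rely on the version of the canonical coordinates given in Lemma~\ref{B4} for $\La^Y_U$ rather than the raw statement~\ref{caco} on $\La$.
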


 Theorem~\ref{SHL} does not
need that the hyperbolic set $\La$ is locally maximal, but if not the point $y$ is not in $\La$.
In Fisher-Hasselbaltt~\cite{FH}  the shadowing theorem~\ref{SHL} is proved without
a local maximality assumption and with the Lipschitz estimate \eqref{shlip} and $\si(t)$ a
homeomorphism such that $\si(t)-t$  has Lipschitz constant $E\de$.
But then proposition~\ref{B71} above proves the bound $|\si(t)-t|<E\de$ and moreover,
that $\si(t)-t$ can be taken constant on each interval $]t_i,t_{i+1}[$.

Theorem~\ref{SHL} is proved in Bowen~\cite{Bowen6} (2.2) p.~6 with $\si(t)-t$  constant on each 
$]t_i,t_{i+1}[$ and without the estimate $E\de$.
  A proof of  theorem~\ref{SHL} for flows without the local maximality hypothesis
  and with the explicit estimate  $E\de$ appears in Palmer \cite{Palmer}
  theorem~9.3, p. 188. In \cite{Palmer}, \cite{Palmer2009} the theorem 
  requires an upper bound on the lengths of the intervals in $T$. 
  This is because there the theorem is proven also for perturbations 
  of the flow. Indeed by Proposition~\ref{B71} longer intervals in 
  $T$  {\sl improve} the estimate.

\nocite{Be3}


\def\cprime{$'$} \def\cprime{$'$} \def\cprime{$'$} \def\cprime{$'$}
\providecommand{\bysame}{\leavevmode\hbox to3em{\hrulefill}\thinspace}
\providecommand{\MR}{\relax\ifhmode\unskip\space\fi MR }
\providecommand{\MRhref}[2]{%
  \href{http://www.ams.org/mathscinet-getitem?mr=#1}{#2}
}
\providecommand{\href}[2]{#2}

\end{document}